\theoremstyle{definition}
\newtheorem{theorem}{Theorem}[section]
\newtheorem{lemma}[theorem]{Lemma}
\newtheorem{assumption}[theorem]{Assumption}
\newtheorem{remark}[theorem]{Remark}
\newtheorem{definition}[theorem]{Definition}
\title{A Data-Driven Framework for Koopman Semigroup Estimation in Stochastic Dynamical Systems}
\date{}
\author{
Yuanchao Xu\thanks{Corresponding author. Department of Mathematical and Statistical Sciences, University of Alberta, University Commons 5-140
Edmonton, Alberta, Canada T6G 2N8. (yuanchao@ualberta.ca)} \quad
Kaidi Shao\thanks{International Center for Primate Brain Research, Chinese Academy of Sciences, No. 500 Qiang Ye Road, Songjiang District, Shanghai, 201602, China. (kaidi.shao@icpbr.ac.cn)} \quad
Isao Ishikawa\thanks{Center for Science Adventure and Collaborative Research Advancement, Kyoto University, Kitashirakawa Oiwake-cho, Sakyo-ku, Kyoto-shi, Kyoto 606-8502, Japan. (ishikawa.isao.5s@kyoto-u.ac.jp)} \quad
Yuka Hashimoto\thanks{NTT Network Service Systems Laboratories, NTT Corporation, 3-9-11, Midori-cho, Musashinoshi, Tokyo, 180-8585, Japan. (yuka.hashimoto@ntt.com)} \quad
Nikos Logothetis\thanks{International Center for Primate Brain Research, Chinese Academy of Sciences, No. 500 Qiang Ye Road, Songjiang District, Shanghai, 201602, China. (nikos.logothetis@icpbr.ac.cn)} \quad
Zhongwei Shen\thanks{Department of Mathematical and Statistical Sciences, University of Alberta, University Commons 5-140
Edmonton, Alberta, Canada T6G 2N8. (zhongwei@ualberta.ca)}
}
\begin{document}
\maketitle

\begin{abstract}
    We present Stochastic Dynamic Mode Decomposition (SDMD), a novel data-driven framework for approximating the Koopman semigroup in stochastic dynamical systems. Unlike existing approaches, SDMD explicitly incorporates sampling time into its formulation to ensure numerical stability and precision in the presence of noise. By directly approximating the Koopman semigroup rather than its generator, SDMD avoids computationally expensive matrix exponential calculation, providing a more practically efficient pathway for analyzing stochastic dynamics. The framework also leverages neural networks for automated basis selection, minimizing manual effort while preserving computational efficiency. We establish SDMD's theoretical foundations through rigorous convergence guarantees across three critical limits in order: large data, infinitesimal sampling time, and increasing dictionary size. Numerical experiments on canonical stochastic systems including oscillatory system, mean-reverting processes, metastable system and a neural mass model demonstrate SDMD's effectiveness in capturing the spectral properties of the Koopman semigroup, even in systems with complex random behavior.
\end{abstract}

\keywords{stochastic Koopman operator, Markov semigroup, data-driven dynamical system, dynamic mode decomposition, perturbation theory, machine learning}

\section{Introduction}

Dynamical systems theory \cite{strogatz2024nonlinear} provides a foundational framework for modeling complex phenomena across different areas such as climate science \cite{2021AGUFM.A15E1677G}, molecular dynamics \cite{schutte2023overcoming}, fluid mechanics \cite{anderson1995computational}, finance \cite{Mann01112016}, and neuroscience \cite{izhikevich2007dynamical}. In these fields, stochastic dynamical systems naturally arise due to the important role of random perturbation. Consideration of stochasticity is essential because real-world systems often feature incomplete data, measurement noise, high degrees of freedom, and complex, unknown mechanisms. Random fluctuation can drive critical transitions like abrupt climate shifts \cite{benzi1982stochastic} or molecular conformational changes \cite{bolhuis2002transition} that deterministic models often fail to capture. This is also evident in neuroscience, where stochastic modeling methods are increasingly preferred over deterministic approach \cite{faisal2008noise, tuckell1988introduction} due to intrinsic variability in neural activity from synaptic release \cite{manwani1999detecting}, ion-channel gating \cite{hawkes2004stochastic}, and fluctuating inputs \cite{destexhe1999impact}. Moreover, neural dynamics are inherently multi‑scale, ranging from millisecond ion‑channel fluctuations to slow population‑level oscillations, and stochastic approaches remain one of the few tractable ways to bridge these scales \cite{deco2008dynamic}. In general, models accounting for stochasticity usually offer a more realistic and mathematically robust representation of brain dynamics.

Practically speaking, the lack of complete knowledge about underlying first-principles makes direct modeling of complex systems extremely challenging. However, data-driven methods have gained significant attention in recent years for their ability to directly extract insights from observations without requiring detailed prior knowledge or explicit mathematical formulations \cite{brunton2016discovering,chen2018neural,li2020fourier,lu2019deeponet,rudy2017data}. This approach is particularly valuable for complex stochastic systems where theoretical models are incomplete or computationally infeasible. 

Among various data-driven approaches, operator-theoretic methods excel as powerful tools for dynamical system analysis. In particular, the Koopman operator theory \cite{Koopman1931, koopman1932dynamical} converts nonlinear dynamics into a linear, though potentially infinite-dimensional, framework through observables. This conversion allows spectral analysis \cite{mezic2005spectral, mezic2013analysis, rowley2009spectral}, where eigenvalues and eigenfunctions reveal crucial details about a system's stability and long-term behavior. In stochastic systems, the operator becomes a Markov semigroup defined by conditional expectations and captures probabilistic evolution over time. Eigenvalues indicate decay rates and growth patterns, while eigenfunctions help identify invariant structures and coherent features within the dynamics. Such spectral insights are invaluable for understanding system behavior, predicting future evolution, and designing control strategies in both deterministic and stochastic settings. Recent developments in data-driven methods for estimating Koopman semigroup or generator \cite{colbrook2024rigorous,ishikawa2024koopman,kostic2022learningdynamicalsystemskoopman,noe2013variational,Tu2014,Williams_2015,xu2025reskoopnetlearningkoopmanrepresentations}, particularly Extended Dynamic Mode Decomposition (EDMD) \cite{Williams_2015}, have made significant progress in approximating the Koopman operator directly from data. However, EDMD was originally designed for deterministic systems and does not explicitly account for stochastic effects. To address these limitations, various methods \cite{Colbrook2023BeyondER,vcrnjaric2020koopman,KLUS2020132416,kostic2022learningdynamicalsystemskoopman,noe2013variational,wanner2022robust} have been proposed over time. For example, in \cite{Colbrook2023BeyondER}, authors have introduced the concept of variance-pseudospectra as a measure of statistical coherency, which helps in understanding the stochastic system's spectral properties. In \cite{vcrnjaric2020koopman}, authors introduced stochastic Hankel-DMD (sHankel-DMD) algorithm to approximate the spectral properties of the stochastic Koopman operator. In \cite{KLUS2020132416} authors generalized Galerkin approximation method as an extension of EDMD (gEDMD) to approximate the infinitesimal generator of the Koopman operator. In \cite{kostic2022learningdynamicalsystemskoopman}, authors developed a statistical learning framework to learn Koopman operators in reproducing kernel Hilbert spaces (RKHS). In \cite{noe2013variational}, authors proposed a variational approach based on maximizing the Rayleigh coefficient for modeling slow processes in stochastic dynamical systems. In \cite{wanner2022robust}, authors introduced a new DMD algorithm that can accurately approximate the stochastic Koopman operator even when both the dynamics are random and the measurements contain noise. It also enables time-delayed observables for random systems using data from a single trajectory.

In this paper, we introduce \textbf{Stochastic Dynamic Mode Decomposition (SDMD)}, a novel data-driven framework that estimates the Koopman semigroup in the stochastic system by explicitly incorporating sampling time into the approximation process. The key innovation of directly approximating the Koopman semigroup bypass the need for matrix exponential computations. This design not only enhances computational efficiency but also ensures numerical stability when dealing with a typically unbounded Koopman generator. The main contributions of this work include:
\begin{itemize}
    \item \textbf{Explicit Consideration of Sampling Time (\(\Delta t\)) for Stability}: The explicit inclusion of sampling time (\(\Delta t\)) in the SDMD framework is a key innovation, which ensures numerical stability and precision, and addresses challenges faced by other methods in handling stochastic dynamics.

    \item \textbf{Direct Approximation of the Semigroup}: SDMD directly approximates the Koopman semigroup, which avoids the computationally expensive matrix exponential calculations required by most methods that return only the generator. This approach reduces computational cost while providing a more practical and efficient pathway for analyzing stochastic systems.

    \item \textbf{Computational Efficiency with Neural Network Integration}: The neural network extension enables adaptive basis selection directly from data without requiring manual intervention. Unlike other methods which may involves resource-intensive computations of Jacobian and Hessian matrices, our method significantly reduces computational resource while maintaining consistency with the stochastic evolution.
    
    \item \textbf{Rigorous Theoretical Guarantees}: The proposed framework includes comprehensive convergence analysis, covering the large data limit, the zero-limit of sampling time, and the large dictionary size. These guarantees establish the reliability and robustness of SDMD in approximating the Koopman semigroup.
\end{itemize}

From an application perspective, our SDMD framework aims to provide a practical and efficient tool for analyzing stochastic dynamical systems. By explicitly incorporating sampling time and directly approximating the Koopman semigroup, SDMD provides more reliable spectral analysis for systems where random fluctuations play a crucial role. This enhanced reliability translates to improved identification of dominant modes, more accurate prediction of system evolution, and better characterization of system stability under stochastic perturbations. The method's computational efficiency makes it particularly suitable for analyzing large datasets from experimental observations, where both noise and deterministic dynamics need to be properly accounted for. Additionally, the neural network extension of SDMD offers automated feature extraction from complex data, reducing the need for domain-specific expertise in basis function selection. These practical benefits complement the theoretical guarantees of our approach, making SDMD a powerful tool for researchers seeking to understand the fundamental dynamical properties of stochastic systems across various scientific disciplines.

The rest of this paper is organized as follows: Section 2 provides the mathematical background of stochastic Koopman operators. Section 3 details our computational methodology. Section 4 presents the convergence analysis. Section 5 extends the framework to neural network implementations. Section 6 demonstrates the effectiveness of our approach through experiments. Finally, Section 7 concludes with discussions and future directions.

\section{Stochastic Koopman Operator}\label{sec: intro_stochastic_kpm_operator}

In dynamical systems, the Koopman operator provides a powerful mathematical framework for analyzing the evolution of observables instead of the system states themselves. For stochastic systems, the Koopman operator forms a Markov semigroup \cite{lasota2013chaos,pavliotis2016stochastic,pazy2012semigroups} defined through conditional expectations, capturing the probabilistic evolution of observables over time by describing how their expected values change under the influence of both deterministic dynamics and random perturbations.

Let $\mathcal{M} \subseteq \mathbb{R}^d$ be the state space equipped with the Borel $\sigma$-algebra, and consider a continuous-time stochastic process $(\mathbf{X}_t)_{t\geq 0}$ on a probability space $(\Omega,\mathbb{P})$ defined by the stochastic differential equation:
\begin{equation}\label{eq: sde}
    d\mathbf{X}_t = \mathbf{b}(\mathbf{X}_t)dt + \bm{\sigma}(\mathbf{X}_t)d\mathbf{W}_t, \quad \mathbf{X}_0 = \mathbf{x},
\end{equation}
where $\mathbf{b}: \mathcal{M} \to\mathbb{R}^d$ is the drift term, $\bm{\sigma}: \mathcal{M} \to\mathbb{R}^{d\times d}$ is the diffusion term, and $(\mathbf{W}_t)_{t\geq 0}$ is an $d$-dimensional Wiener process. We assume that both $\mathbf{b}$ and $\sigma$ satisfy appropriate regularity condition \cite{engel1999one}.

Let $\rho$ be a probability distribution on $\mathcal{M}$. The space $\mathcal{F}$ of square-integrable functions with respect to $\rho$ is defined as:
\begin{equation*}
    \mathcal{F} \coloneqq \left\{f : \int_\mathcal{M} |f|^2 d\rho < \infty\right\},
\end{equation*}
equipped with the inner product 
$\langle f,g \rangle_\rho \coloneqq \int_\mathcal{M} f g \; d\rho$ and corresponding norm $\|f\|_{\rho} \coloneqq \sqrt{\langle f,f \rangle_\rho}$. Notice that $\rho$ is not necessarily a stationary distribution of the underlying dynamical system. 

For any observable $f \in \mathcal{F}$, the stochastic Koopman operator family $(\mathcal{K}^t)_{t\geq0}$ is defined as
\begin{equation}\label{def:stochastic_kpm}
    (\mathcal{K}^t f)(\mathbf{x}) \coloneqq \mathbb{E}_{\mathbb{P}}[f(\mathbf{X}_t)|\mathbf{X}_0=\mathbf{x}],
\end{equation}
where $\mathbb{E}_{\mathbb{P}}$ denotes the expectation with respect to the probability measure $\mathbb{P}$ on $\Omega$, and $\mathbf{X}_t:\Omega\to \mathcal{M}$ is the process starting from $\mathbf{x}$.

\begin{assumption}\label{ass:str_cont_semigroup_K}
We assume that $\{\mathcal{K}^t\}_{t \geq 0}$ is a strongly continuous semigroup of bounded linear operators on $\mathcal{F}$, that is, 
\begin{itemize}
    \item $\mathcal{K}^t$ is a bounded linear operator on $\mathcal{F}$ for each $t\geq0$;
    \item $\mathcal{K}^0 = \text{I}$, $\mathcal{K}^t \circ \mathcal{K}^s = \mathcal{K}^{t+s}$ for all $ t,s \geq 0$;
    \item $\lim_{t\to 0^{+}}\|\mathcal{K}^tf - f\|_{\rho} = 0$ for each $f \in \mathcal{F}$.
\end{itemize}
\end{assumption}

The connection between the stochastic process and Koopman operator can be further understood through its infinitesimal generator $\mathcal{A}$, defined as
\begin{equation}\label{def:generator}
    \mathcal{A}f \coloneqq \lim_{t\to 0}\frac{\mathcal{K}^t f-f}{t}
\end{equation}
on the domain $\mathcal{D}(\mathcal{A}) = \left\{ f \in \mathcal{F} : \lim_{t \to 0} \frac{\mathcal{K}^t f - f}{t} \text{ exists in } \mathcal{F} \right\}$.
\begin{remark}
    Assumption \ref{ass:str_cont_semigroup_K} ensures that the domain $ \mathcal{D}(\mathcal{A})$ is dense in $\mathcal{F}$ and the generator \(\mathcal{A}\) is a closed operator \cite{pazy2012semigroups}. This assumption is typically satisfied by a broad class of stochastic dynamical systems when the drift $\mathbf{b(\cdot)}$ and diffusion $\bm{\sigma(\cdot)}$ coefficients satisfy certain regularity conditions and solutions are well-posed, e.g., a gradient potential system where the potential grows sufficiently fast at infinity. We will rely on Assumption \ref{ass:str_cont_semigroup_K} throughout this paper, especially in Section \ref{cvg_pj_kpm_generator}. 
\end{remark}

From It\^{o}'s formula \cite{pavliotis2016stochastic}, we have:
\begin{equation}\label{eq: generator_ito_formula}
    \mathcal{A}f = \sum_{i=1}^d \mathbf{b}_i\frac{\partial f}{\partial x_i} + \frac{1}{2}\sum_{i,j=1}^d (\bm{\sigma}\bm{\sigma}^T)_{ij}\frac{\partial^2 f}{\partial x_i\partial x_j},\quad \forall f\in C^2_b(\mathcal{M}),
\end{equation}
where $C^2_b(\mathcal{M})$ denotes the space of twice continuously differentiable functions with bounded derivatives on $\mathcal{M}$.

For spectral analysis of the Koopman generator $\mathcal{A}$, we consider the eigenvalue problem:
\begin{equation*}
    \mathcal{A}\phi = \lambda\phi,
\end{equation*}
where $\lambda\in\mathbb{C}$ and $\phi\in\mathcal{D}(\mathcal{A})$ are the eigenvalue and eigenfunction respectively. The eigenvalue $\lambda$ of the Koopman generator $\mathcal{A}$ is closely connected to the eigenvalue $\mu$ of the stochastic Koopman operator $\mathcal{K}^t$ through the following relationship:
\begin{equation}\label{eq: evalue_generator_semigroup}
    \mu = e^{t\lambda}.
\end{equation}
This relationship provides a practical way to compute the generator's spectrum from discrete-time observations \cite{engel1999one}.

\section{Computation Method in Stochastic Dynamical System}

This section presents a computational method for analyzing stochastic dynamical systems through the lens of Koopman operator theory. The core idea involves utilizing the stochastic Taylor expansion \cite[section 5.2]{pavliotis2016stochastic} within the framework of the Galerkin method \cite{boyd2013chebyshev}, which results in an approach tailored for stochastic systems. This approach, referred to as Stochastic Dynamic Mode Decomposition (SDMD), provides a data-driven framework for approximating the Koopman semigroup of stochastic systems. Below, we introduce some relevant background and the necessary notation.

\textbf{Notation: }Let \( \{ \psi_1, \dots, \psi_N \}\subset\mathcal{D}(\mathcal{A}) \) be a set of dictionary functions defined on the state space \( \mathcal{M} \), forming the finite-dimensional space \( \mathcal{F}_N \coloneqq \text{span} \{\psi_1, \dots, \psi_N \} \). For these functions, we define the following Gram matrices $G, H \in \mathbb{R}^{N\times N}$
\begin{equation}\label{gram}
    [G]_{ij} \coloneqq \langle\psi_i,\psi_j\rangle_\rho, \quad [H]_{ij} \coloneqq \langle\psi_i,\mathcal{A}\psi_j\rangle_\rho,
\end{equation}
where the dictionary functions are assumed to be linearly independent in order to satisfy $G$'s invertibility.

In practice, let $\{\mathbf{x}_k\}_{k=1}^m$ be the i.i.d. data sampled from $\rho$, i.e., each $\mathbf{x}_i$ is drawn independently and identically from the probability distribution $\rho$.
Next, construct the data matrices $\Psi_X, \Psi^{'}_X\in\mathbb{R}^{m\times N}$ in the following:
\begin{gather}\label{data_matrices}
    \Psi_X \coloneqq 
    \begin{bmatrix}
        \psi_1(\mathbf{x}_1) & \cdots & \psi_{N}(\mathbf{x}_1) \\
        \vdots & \ddots & \vdots \\
        \psi_1(\mathbf{x}_m) & \cdots & \psi_{N}(\mathbf{x}_m)
    \end{bmatrix} , \
    \Psi^{'}_X \coloneqq 
    \begin{bmatrix}
        \mathcal{A}\psi_1(\mathbf{x}_1) & \cdots & \mathcal{A}\psi_{N}(\mathbf{x}_1) \\
        \vdots & \ddots & \vdots \\
        \mathcal{A}\psi_1(\mathbf{x}_m) & \cdots & \mathcal{A}\psi_{N}(\mathbf{x}_m)
    \end{bmatrix}.
\end{gather}
\begin{remark}
    Since we manually pick up the basis functions, we can directly obtain the Jacobian and Hessian matrices as required for computing each $\mathcal{A}\psi_j(\mathbf{x}_i)$. However, in Neural Network based method,  basis functions can be trained from a time series by Automatic Differentiation \cite{JMLR:v18:17-468}. More details of such method will be discussed in Section \ref{sec:sdmd_dl}.
\end{remark}

Thus, the Gram matrices $G$ and $H$ can be estimated empirically from these data matrices. Specifically, we construct
\begin{equation}\label{eq: empirical_gram}
    \widehat{G} = \frac{1}{m}\Psi_X^*\Psi_X, \quad \widehat{H} = \frac{1}{m}\Psi_X^*\Psi^{'}_X.
\end{equation}
where $*$ denotes conjugate transpose of a matrix. These empirical approximation converges to their theoretical counterparts as the amount of data increases as discussed in \cite{KLUS2020132416,Williams_2015}. Specifically, by the Strong Law of Large Numbers (SLLN), we have
\begin{equation}\label{eq: empirical_gram_lim}
    \lim_{m\to\infty} [\widehat{G}]_{ij} = [G]_{ij}, \quad \lim_{m\to\infty} [\widehat{H}]_{ij} = [H]_{ij} \quad \text{a.s.}
\end{equation}
Building on these definitions, the following section introduces the SDMD method, which combines the Galerkin approximation framework with stochastic dynamics.
\begin{remark}
    The construction of the matrices $\widehat{G}$ and $\widehat{H}$ in Eq.~\eqref{eq: empirical_gram} follows essentially the same procedure as in the gEDMD framework, where $\widehat{G}$ approximates the Gram matrix of the chosen dictionary functions and $\widehat{H}$ approximates the action of the generator on this dictionary. The main distinction lies in that SDMD estimates the semigroup instead of generator, but the underlying matrix assembly is consistent with gEDMD. See \cite{KLUS2020132416} for more details.
\end{remark}


\subsection{Stochastic Dynamic Mode Decomposition (SDMD)}\label{sdmd}
The SDMD method approximates the stochastic Koopman operator by incorporating the stochastic Taylor expansion \cite{pavliotis2016stochastic} into the EDMD framework. The method explicitly includes the sampling time $\Delta t$ and truncates higher-order terms in the expansion. Specifically, SDMD estimates the Koopman operator as follows:
\begin{equation}\label{eq: sdmd_kpm_operator_2}
    \widehat{K}_{N,\Delta t,m} \coloneqq I + \Delta t \  \widehat{G}^{-1}\widehat{H},
\end{equation}
where $\widehat{G}$ and $\widehat{H}$ are the Gram matrices computed from data as in Eq.\eqref{eq: empirical_gram}. This formulation omits higher-order terms for sufficiently small $\Delta t$, enabling an efficient approximation of the stochastic Koopman operator. Below, we provide the derivation that leads to this result.

\subsubsection*{Derivation of SDMD Method}
Consider the stochastic system defined in Eq.\eqref{eq: sde}. Let 
$$
    \bm{\Psi}_N(\mathbf{x}_i)=[\psi_1(\mathbf{x}_i) \ \dots \ \psi_N(\mathbf{x}_i)]^\top
$$
be a vector of manually selected basis functions evaluated at some data point $\mathbf{x}_i$. Suppose $f(\mathbf{x}_i)=\bm{\Psi}_N(\mathbf{x}_i)^\top \bm{a}$ for some $\bm{a} \in \mathbb{R}^N$, then for any $\Delta t>0$, EDMD \cite{Williams_2015} approximates the Koopman operator $\mathcal{K}^{\Delta t}$ onto $\mathcal{F}_N$ using Galerkin approximation:
\begin{align}\label{expansion_edmd}
    \mathcal{K}^{\Delta t} f(\mathbf{x}_i)
    &= \bm{\Psi}_N(\mathbf{x}_i)^\top \widehat{K}_{N, m}\bm{a} + r(\mathbf{x}_i),
\end{align}
where $r(\mathbf{x}_i)$ is the residual.

While EDMD provides a framework for approximating the Koopman operator, our SDMD method explicitly addresses the challenges of stochastic systems by incorporating stochastic Taylor expansion \cite{pavliotis2016stochastic} in Eq.\eqref{def:generator} to account for noise. Specifically, for each basis function $\psi_j$ evaluated at $\mathbf{x}_i$
\begin{equation*}
    \mathcal{A}\psi_j(\mathbf{x}_i) 
    \approx \frac{\mathcal{K}^{\Delta t} \psi_j(\mathbf{x}_i)-\psi_j(\mathbf{x}_i)}{\Delta t}.
\end{equation*}
After rearrange, we have
\begin{equation}\label{eq: expansion_basis}
    \mathcal{K}^{\Delta t} \psi_j(\mathbf{x}_i)
    \approx \psi_j(\mathbf{x}_i) + \Delta t \ \mathcal{A}\psi_j(\mathbf{x}_i) + o_{i,j}(\Delta t),
\end{equation}
where each $\mathcal{A}\psi_j(\mathbf{x}_i)$ is computed by Eq.\eqref{eq: generator_ito_formula} and $o_{i,j}(\Delta t)$ is the asymptotic term $o(\Delta t)$ corresponding to $\mathcal{K}^{\Delta t}\psi_j$ expanded at data point $\mathbf{x}_i$. 

While SDMD truncates the stochastic Taylor expansion at the first-order term $\Delta t\,\mathcal{A}\psi_j$, the expansion can in principle be extended to include the second-order term $\frac{\Delta t^2}{2}\,\mathcal{A}^2\psi_j$. This would compute a higher-order approximation of the Koopman semigroup that potentially improves accuracy for larger sampling intervals $\Delta t$. However, computing $\mathcal{A}^2\psi_j$ requires higher-order derivatives of both the basis functions and the SDE coefficients, which greatly increases computational cost. The explicit form of $\mathcal{A}^2\psi_j$ for general SDEs is provided in Appendix~\ref{2nd_order_expansion}.

\begin{remark}
The asymptotic term $o(\Delta t)$ represents the remainder terms in the Taylor expansion that decay faster than $\Delta t$ as $\Delta t \to 0$. Specifically, for each basis function $\psi_j$ evaluated at data point $\mathbf{x}_i$, we have:
\begin{equation*}
    \lim_{\Delta t \to 0} \frac{o_{i,j}(\Delta t)}{\Delta t} = 0.
\end{equation*}
This notation is used to indicate that these terms become negligible compared to the linear term $\Delta t$ for sufficiently small sampling time steps, which justifies their omission in the approximation Eq.\eqref{eq: sdmd_kpm_operator_2} when $\Delta t$ is small.
\end{remark}

Next, we use Eq.\eqref{eq: expansion_basis} for each basis function $\psi_j$ in the expansion of the stochastic Koopman operator to approximate the expected value of $f$ in the stochastic dynamical system starting at $\mathbf{x}_i$, which gives:
\begin{align}\label{eq: expansion_sdmd}
    \mathcal{K}^{\Delta t}f(\mathbf{x}_i) 
    &= \sum_{j=1}^n \left[ \psi_j(\mathbf{x}_i) + \Delta t \ \mathcal{A}\psi_j(\mathbf{x}_i) + o_{i,j}(\Delta t) \right]\bm{a}_j \notag \\
    &= \left[ \bm{\Psi}_N(\mathbf{x}_i)^\top + \Delta t \ \mathcal{A}^{\bm{\Psi}}_N(\mathbf{x}_i)^\top + \bm{o}_{i,N}(\Delta t)^\top \right]\bm{a},
\end{align}
where $\bm{o}_{i,N}(\Delta t)=\left[o_{i,1}(\Delta t) \ \dots \ o_{i,N}(\Delta t)\right]^\top$ and $\mathcal{A}^{\bm{\Psi}}_N(\mathbf{x}_i) = \left[ \mathcal{A}\psi_1(\mathbf{x}_i) \ \dots \ \mathcal{A}\psi_N(\mathbf{x}_i) \right]^\top$.

Now, equating both Eq.\eqref{expansion_edmd} and Eq.\eqref{eq: expansion_sdmd}, then evaluating over all data points $\{\mathbf{x}_i\}_{i=1}^m$, we can have the following minimization problem:
\begin{equation*}
    \min_{\tilde{K}_{N,\Delta t,m}\in\mathbb{R}^{N\times N}} \sum_{i=1}^m \left| r(\mathbf{x}_i) \right|^2 
    = \min_{\tilde{K}_{N,\Delta t,m}\in\mathbb{R}^{N\times N}} \sum_{i=1}^m \left| \left[\bm{\Psi}_N(\mathbf{x}_i)^\top + \Delta t \ \mathcal{A}^{\bm{\Psi}}_N(\mathbf{x}_i)^\top + \bm{o}_{i,N}(\Delta t)^\top - \bm{\Psi}_N(\mathbf{x}_i)^\top \tilde{K}_{N,\Delta t,m}\right]\bm{a} \right|^2,
\end{equation*}
which is equivalent to: 
\begin{equation}\label{eq: min_loss}
    \min_{\tilde{K}_{N,\Delta t,m}\in\mathbb{R}^{N\times N}} \| \Psi_X + \Delta t \ \Psi^{'}_X +\bm{o}_{m,N}(\Delta t) - \Psi_X \tilde{K}_{N,\Delta t,m} \|_F^2,
\end{equation}
where $\| \cdot \|_F$ denotes the matrix Frobenius norm and the higher order matrix $\bm{o}_{m,N}(\Delta t)$ is $\left[\bm{o}_{m,N}(\Delta t)\right]_{ij} = o_{i,j}(\Delta t)$. More specifically,  
\begin{equation*}
    \bm{o}_{m,N}(\Delta t) = 
    \begin{bmatrix}
        o_{1,1}(\Delta t) & \cdots & o_{1,N}(\Delta t) \\
        \vdots & \ddots & \vdots \\
        o_{m,1}(\Delta t) & \cdots & o_{m,N}(\Delta t)
    \end{bmatrix}.
\end{equation*} 
Thus, the minimal $\tilde{K}_{N,\Delta t,m}$ is
\begin{align}\label{eq: sdmd_kpm_operator}
    \tilde{K}_{N,\Delta t,m} 
    &= \Psi_X^{\dagger}(\Psi_X + \Delta t \ \Psi^{'}_X + \bm{o}_{m,N}(\Delta t)) \notag \\
    &= I + \Delta t \ \left(\Psi_X^* \Psi_X\right)^{-1}\left(\Psi_X^* \Psi^{'}_X\right) + \Psi_X^{\dagger}\bm{o}_{m,N}(\Delta t) \notag \\
    &= \widehat{K}_{N,\Delta t,m} + \Psi_X^{\dagger}\bm{o}_{m,N}(\Delta t),
\end{align}
where $\dagger$ denotes the pseudoinverse. When $\Delta t$ is very small, we omit $\bm{o}_{m,N}(\Delta t)$ and keep $\widehat{K}_{N,\Delta t,m}$ as in the Eq.\eqref{eq: sdmd_kpm_operator_2}.
    
\begin{remark}
    We typically compute $(\widehat{G}+\gamma I)^{-1}$ instead of $\widehat{G}^{-1}$ for some small number $\gamma>0$ to avoid singularity.
\end{remark}

\subsection{Computing Algorithm}
This section presents the algorithmic implementation of our method. To provide a clear understanding of the computational procedure, we first present a flow chart in Figure \ref{fig:sdmd_flow_chart} that illustrates the key steps of our approach. Following the flow chart, we provide a detailed pseudocode in Algorithm \ref{alg:sdmd} that formalizes the computational steps. The algorithm takes as input the time series data and system parameters, and outputs the approximated Koopman operator. Each step in the algorithm corresponds to the theoretical framework developed in Section \ref{sdmd}, ensuring a complete implementation of our method.

The coefficients \( \mathbf{b}(x) \) and \( \mathbf{\sigma}(x) \) of the stochastic differential equation (SDE) can either be assumed as known for predefined models or estimated from sampled time-series data (see \ref{appendix: sde_coef_estimation}). Specifically, for each independently and identically distributed (i.i.d.) initial point \( x_k \), these coefficients can be approximated using discrete-time methods based on observed trajectories. For instance, \( \mathbf{b}(x) \) can be derived from finite differences to approximate the derivative, while \( \mathbf{\sigma}(x) \) can be inferred from the covariance of the increments.
\begin{figure}[htb]
    \centering    
    \includegraphics[width=0.8\linewidth]{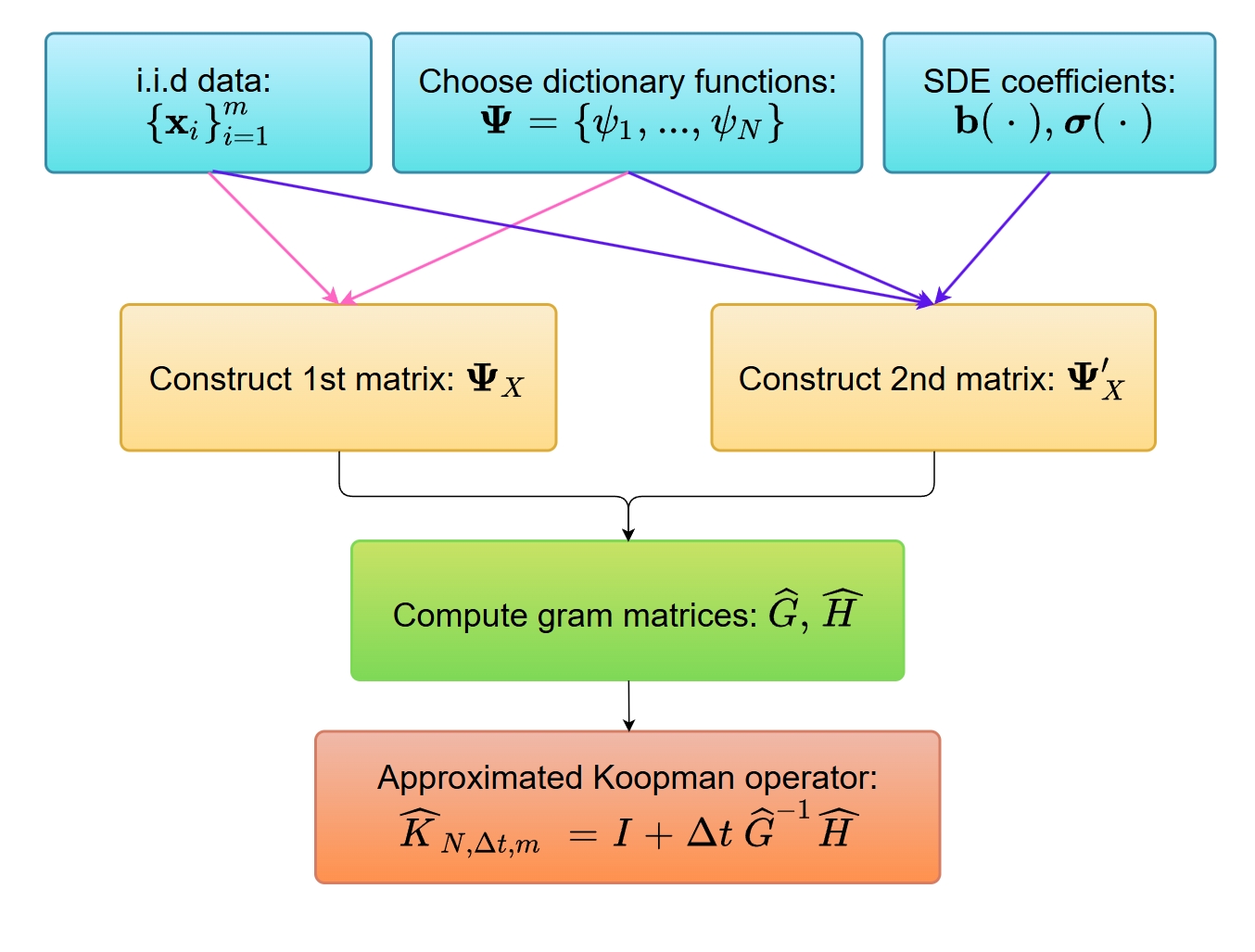}
    \caption{A flow chart for SDMD method.}
    \label{fig:sdmd_flow_chart}
\end{figure}

\begin{algorithm}[htb]
\caption{Estimation of stochastic Koopman operator}
\label{alg:sdmd}
\begin{algorithmic}[1]
\Require i.i.d. data $\{ \mathbf{x}_k\}_{k=1}^m$, dictionary functions $\{\psi_1,\ldots,\psi_N\}$, SDE coefficients $\mathbf{b}(\cdot)$, $\mathbf{\sigma}(\cdot)$, sampling time step $\Delta t$, regularization parameter $\gamma$.
\State Construct matrices $\Psi_X, \Psi^{'}_X$ (See Eq.\eqref{data_matrices}).
\State Construct empirical gram matrices $\widehat{G}=(1/m)\Psi_X^*\Psi_X, \widehat{H}=(1/m)\Psi_X^*\Psi^{'}_X$ (See Eq.\eqref{eq: empirical_gram}).
\State Compute $\widehat{K}_{N, \Delta t, m}=I+\Delta t \ \widehat{G}^{-1}\widehat{H}$ (See Eq.\eqref{eq: sdmd_kpm_operator_2}).
\Ensure Approximated Koopman operator $\widehat{K}_{N, \Delta t, m}$.
\end{algorithmic}
\end{algorithm}

\begin{remark}\label{rmk:generalized_eigen}
    Although in our implementation the inverse of $\hat{G}+\gamma I$ in Eq.~\eqref{eq: sdmd_kpm_operator_2} is computed via Cholesky factorization for numerical stability, the spectral computation can also be formulated as a generalized eigenvalue problem. Specifically, we have
    $$
        \widehat{H} v = \frac{\mu-1}{\Delta t}\,\widehat{G}\,v, \quad \lambda = \frac{\mu-1}{\Delta t},
    $$
    where $\mu$ and $v$ are the approximated eigenpair of $\mathcal{K}$, and $\lambda$ corresponds to the approximated eigenvalue of the generator $\mathcal{A}$.
\end{remark}

\subsection{Computing in Deterministic Continuous-Time Systems}
While the SDMD framework is designed with stochastic systems in mind, the core idea of approximating the Koopman semigroup using a truncated Taylor expansion can also be applied to deterministic continuous-time systems. In such cases, the underlying dynamics are governed by an ODE:
$$
    \frac{d}{dt} \mathbf{x}(t) = \mathbf{b}(\mathbf{x}),
$$
and the corresponding Koopman generator becomes a Lie derivative along the vector field $\mathbf{b}(\mathbf{x})$:
$$
    \mathcal{A} f(\mathbf{x}) = \mathbf{b}(\mathbf{x}) \cdot \nabla f(\mathbf{x}).
$$
Then, the Taylor expansion of the semigroup remains:
$
\mathcal{K}^{\Delta t} f(\mathbf{x}) \approx f(\mathbf{x}) + \Delta t \mathcal{A} f(\mathbf{x}),
$
which leads to a similar approximation formula as in Eq.\eqref{eq: sdmd_kpm_operator_2} where $\mathcal{A} \psi_j(\mathbf{x}) = \mathbf{b}(\mathbf{x}) \cdot \nabla \psi_j(\mathbf{x})$. This can be viewed as a continuous-time version to the standard EDMD method \cite{Williams_2015}, which is formulated for discrete-time snapshot pairs, and potentially bridges the gap between EDMD and generator-based methods like gEDMD \cite{KLUS2020132416}. 
\begin{remark}
    It should be noted that the time-difference approach of EDMD and gEDMD may produce significantly different results even in the limit of high sampling rate ($\Delta t \to 0$), particularly when the data sampling measure does not satisfy certain regularity conditions. However, SDMD additionally truncated the higher order term $o(\Delta t)$ from stochastic Taylor expansion to enhance the numerical stability. See \cite[Section~4.3]{bramburger2024auxiliary}, \cite[Section~4.2]{math9192495} and Section~\ref{sec:comparison} for more details.
\end{remark}

\section{Convergence Analysis}

This section analyzes the convergence of our stochastic Koopman operator approximation scheme. The analysis follows a sequential framework, where three fundamental regimes are examined in order:

\noindent\textbf{Large Data Convergence}: We first establish the convergence of the empirical approximation by deriving probabilistic error bounds that explicitly quantify the convergence rate as the sample size $m\to\infty$.

\noindent\textbf{Zero-Limit of Sampling Time}: Next we prove that time-discretized approximation converges to the true Koopman generator as the sampling interval $\Delta t\to 0$. This limit builds on large data convergence and connects discrete-time computations to continuous dynamics.

\noindent\textbf{Large Dictionary Size Convergence}: We finally demonstrate that finite-dimensional approximation of the Koopman generator and semigroups converges to their infinite-dimensional counterparts as dictionary size $N\to\infty$. This step completes the convergence analysis.

This structured approach $\lim_{N\to\infty}\lim_{\Delta t \to 0}\lim_{m\to\infty}$ highlights the critical interplay between these limits and ensures rigorous convergence results through appropriate function spaces.

The convergence analysis will be based on the following assumption:
\begin{assumption}\label{ass: dictionry_bound}
    Let $\mathcal{O}\subset\mathcal{M}$ be a compact subset that contains all sampled data points $\mathbf{x}_1, \dots, \mathbf{x}_m$. We assume that for each basis function $\psi_i$, both $\psi_i$ and $\mathcal{A} \psi_i$ are uniformly bounded on $\mathcal{O}$, i.e., there exist constants $C=C_{N,\mathcal{O}}, L=L_{N,\mathcal{O}} > 0$ such that:
    $$
    |\psi_i(x)| \leq C, \quad |\mathcal{A} \psi_i(x)| \leq L \quad \text{for all } x \in \mathcal{O},\; i = 1,\dots,N.
    $$
\end{assumption}

\subsection{Convergence in the Limit of Large Data}

Fix dictionary size $N$ and sampling time step $\Delta t$. Denote \( K_{N,\Delta t} \) by the matrix that represents the large data limit of \( \tilde{K}_{N,\Delta t,m} \) given in Eq.\eqref{eq: sdmd_kpm_operator}:
\begin{align}\label{eq: K_N_Delta_t}
    K_{N,\Delta t} 
    &= \lim_{m\to\infty} \tilde{K}_{N,\Delta t,m} \notag \\
    &= \lim_{m\to\infty} \left( \widehat{K}_{N,\Delta t,m} + \Psi_X^{\dagger}\bm{o}_{m,N}(\Delta t) \right) \notag \\
    &= I + \Delta t \ G^{-1}H + \bm{o}_N(\Delta t),
\end{align}
where $\bm{o}_N(\Delta t) = \lim_{m\to\infty}\Psi_X^{\dagger}\bm{o}_{m,N}(\Delta t)$. Note that, $\bm{o}_N(\Delta t)$ exists since both $K_{N,\Delta t}$ exists from EDMD theory \cite{Williams_2015, Korda_2017} and $\lim_{m\to\infty} \widehat{K}_{N,\Delta t,m} = I + \Delta t \ G^{-1}H$ exists due to Eq.\eqref{eq: empirical_gram_lim}.
\begin{remark}\label{rmk: uniform_cvg}
    Each element in the matrix $\Psi_X^\top \bm{o}_{m,N}(\Delta t)$ is $o(\Delta t)$. After taking the large data limit $m\to\infty$, each element in $\bm{o}_N(\Delta t)$ is still $o(\Delta t)$.
\end{remark}

Now we aim to prove that the empirical Koopman matrix $\widehat{K}_{N,\Delta t,m}$ converges to its large-data limit $K_{N,\Delta t}$ as $m\to\infty$; more specifically, we shall prove a concentration inequality that quantifies the probability of their difference exceeding any given threshold $\epsilon > 0$. Such a probabilistic bound will demonstrate that the empirical approximation becomes increasingly accurate as the sample size $m$ grows. Specifically, we aim to bound the following probabilistic error bound in the following theorem:

\begin{theorem}\label{thm: prob_cvg_K}
    Let $\epsilon>0$. Define $\tilde{\epsilon} = \left(\epsilon - \|\bm{o}_N(\Delta t)\|_F\right)/\Delta t$. Then, with same notation and conditions defined in Lemma \ref{lemma: Gramian_estimation} and Lemma \ref{lemma: Philipp2024}, we have
    \begin{equation*}
        \mathbb{P}\left(\Vert \widehat{K}_{N,\Delta t,m} - K_{N,\Delta t} \Vert_F > \epsilon \right) 
        \leq 2N^2 \left[ \exp \left( -\frac{m}{8} \left(\frac{\tilde{\epsilon} \|G^{-1}\|^{-1}}{N^{3/2} C^2 \tau_{\epsilon}}\right)^2 \right) 
        + \exp \left( -\frac{m}{8} \left(\frac{\tilde{\epsilon} \|H\|}{N^{3/2} C \tau_{\epsilon} L}\right)^2 \right) \right],
    \end{equation*}
    where $\|\cdot\|$ denotes the matrix operator norm.
\end{theorem}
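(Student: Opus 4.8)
The plan is to reduce the statement to the two concentration estimates for the empirical Gram matrices supplied by Lemma~\ref{lemma: Gramian_estimation} (for $\widehat G - G$) and Lemma~\ref{lemma: Philipp2024} (for $\widehat H - H$), and then to propagate them through the algebraic relation between $\widehat K_{N,\Delta t,m}$ and $K_{N,\Delta t}$. First I would separate the deterministic remainder: from \eqref{eq: sdmd_kpm_operator_2} and \eqref{eq: K_N_Delta_t},
\[
\widehat K_{N,\Delta t,m} - K_{N,\Delta t} = \Delta t\,\bigl(\widehat G^{-1}\widehat H - G^{-1}H\bigr) - \bm{o}_N(\Delta t),
\]
so by the triangle inequality the event $\bigl\{\|\widehat K_{N,\Delta t,m} - K_{N,\Delta t}\|_F > \epsilon\bigr\}$ is contained in $\bigl\{\|\widehat G^{-1}\widehat H - G^{-1}H\|_F > \tilde\epsilon\bigr\}$, with $\tilde\epsilon = (\epsilon - \|\bm{o}_N(\Delta t)\|_F)/\Delta t$. (Implicitly $\Delta t$ is small enough that $\|\bm{o}_N(\Delta t)\|_F < \epsilon$, i.e.\ $\tilde\epsilon > 0$; for $\tilde\epsilon \le 0$ the asserted bound carries no information.) It therefore suffices to estimate $\mathbb P\bigl(\|\widehat G^{-1}\widehat H - G^{-1}H\|_F > \tilde\epsilon\bigr)$.

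Next I would split this matrix error using the resolvent identity $\widehat G^{-1} - G^{-1} = \widehat G^{-1}(G-\widehat G)G^{-1}$:
\[
\widehat G^{-1}\widehat H - G^{-1}H = \widehat G^{-1}\bigl(\widehat H - H\bigr) + \widehat G^{-1}\bigl(G-\widehat G\bigr)G^{-1}H,
\]
so that, by submultiplicativity of the norms and $\|G^{-1}H\| \le \|G^{-1}\|\,\|H\|$,
\[
\bigl\|\widehat G^{-1}\widehat H - G^{-1}H\bigr\|_F \le \|\widehat G^{-1}\|\Bigl(\bigl\|\widehat H - H\bigr\|_F + \|G^{-1}\|\,\|H\|\,\bigl\|\widehat G - G\bigr\|_F\Bigr).
\]
This reduces matters to controlling the two random matrices $\widehat H - H$ and $\widehat G - G$ — exactly the objects of the two cited lemmas — together with the random scalar $\|\widehat G^{-1}\|$.

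The step I expect to be the main obstacle is controlling $\|\widehat G^{-1}\|$, which is random and a priori unbounded: the delicate point is to bound it without spawning a third error term. I would do this by a perturbation argument: on the event $\{\|\widehat G - G\| \le \delta\}$ with $\delta\|G^{-1}\| < 1$, the Neumann series gives $\|\widehat G^{-1}\| \le \|G^{-1}\|/(1 - \delta\|G^{-1}\|)$, so taking $\delta$ to be a fixed fraction of $\|G^{-1}\|^{-1}$ pins $\|\widehat G^{-1}\|$ below the quantity $\tau_\epsilon$ of Lemma~\ref{lemma: Gramian_estimation}. The bookkeeping is then arranged so that the complementary event $\{\|\widehat G - G\| > \delta\}$ is absorbed into the $\widehat G$-concentration term rather than contributing separately: one imposes on $\|\widehat G - G\|$ the \emph{more stringent} of the threshold $\delta$ and the threshold dictated by the budget split below, so that a single application of Lemma~\ref{lemma: Gramian_estimation} simultaneously secures invertibility of $\widehat G$ and smallness of the perturbation term.

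Finally I would allocate the tolerance $\tilde\epsilon$ between the two terms of the bound above and substitute the concentration estimates. Each entry of $\widehat H$, resp.\ $\widehat G$, is an average of $m$ i.i.d.\ random variables of modulus at most $CL$, resp.\ $C^2$, by Assumption~\ref{ass: dictionry_bound}; Hoeffding's inequality applied to their real and imaginary parts yields tail exponents of the form $-\tfrac{m}{8}(\,\cdot\,)^2$, and a two-sided union bound over the $N^2$ entries of each matrix produces the prefactor $2N^2$ together with the sum of the two exponentials in the statement. The precise collection of constants — the dimensional power $N^{3/2}$, the factors $\|G^{-1}\|^{-1}$ and $\|H\|$, and the quantity $\tau_\epsilon$ — then falls out of inserting the exact bounds of Lemma~\ref{lemma: Gramian_estimation} and Lemma~\ref{lemma: Philipp2024} and choosing the split optimally; this part is routine but bookkeeping-heavy. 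Combining with the reduction of the first paragraph completes the proof.
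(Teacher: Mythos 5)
Your proposal is correct and follows essentially the same route as the paper: peel off the deterministic remainder $\bm{o}_N(\Delta t)$ by the triangle inequality to reduce to a tail bound on $\|\widehat{G}^{-1}\widehat{H} - G^{-1}H\|$ at level $\tilde\epsilon$ (the Frobenius-to-operator-norm conversion supplying the $\sqrt{N}$ that becomes part of the $N^{3/2}$), split that event into separate tails for $\|\widehat{G}-G\|$ and $\|\widehat{H}-H\|$, and finish with Hoeffding plus a union bound over the $N^2$ entries. The one correction concerns the roles of the cited lemmas: Lemma~\ref{lemma: Gramian_estimation} supplies \emph{both} concentration estimates (for $\widehat{G}-G$ and for $\widehat{H}-H$), whereas Lemma~\ref{lemma: Philipp2024} is not a concentration bound for $\widehat{H}-H$ at all but is exactly the deterministic splitting step --- with $\tau_\epsilon = 2\|G^{-1}\|\|H\|+\epsilon$ defined there, not in Lemma~\ref{lemma: Gramian_estimation} --- that you re-derive by hand via the resolvent identity and the Neumann-series control of $\|\widehat{G}^{-1}\|$ on the good event. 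That re-derivation is sound (it is the standard proof of the cited lemma, including the device of absorbing the invertibility event into the $\widehat{G}$-tail), so your third paragraph can simply be replaced by an invocation of Lemma~\ref{lemma: Philipp2024}; your observation that the bound is vacuous unless $\Delta t$ is small enough that $\tilde\epsilon>0$ is a worthwhile point the paper leaves implicit.
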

We will introduce the following lemmas in order to prove Theorem \ref{thm: prob_cvg_K}. Note that, Lemma \ref{lemma: Hoeffding} is a result of McDiarmid inequality \ref{lemma: McDiarmid_ineq}.

\begin{lemma}[Hoeffding's Inequality \cite{Vershynin_2018}]\label{lemma: Hoeffding}
    Assume $X_1, \ldots, X_n$ are i.i.d. with each $X_i \in [a_i, b_i]$ for all $1\leq i \leq n$. Then, for any $\epsilon>0$,
    \[
        \mathbb{P}\left(\left| \sum_{i=1}^n X_i - \mathbb{E}\left[\sum_{i=1}^n X_i\right]\right| \geq \epsilon \right) \leq 2e^{-\frac{2\epsilon^2}{\sum_{i=1}^n (b_i - a_i)^2}}.
    \]
\end{lemma}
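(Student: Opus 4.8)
The plan is to invoke the bounded differences (McDiarmid) inequality of Lemma~\ref{lemma: McDiarmid_ineq}, as flagged in the remark preceding the statement. First I would set $f(x_1,\dots,x_n) \coloneqq \sum_{i=1}^n x_i$ and verify that $f$ has the bounded differences property on $\prod_{i=1}^n [a_i,b_i]$: replacing the $i$-th coordinate by any $x_i' \in [a_i,b_i]$ while holding the others fixed changes $f$ by exactly $x_i - x_i'$, whose magnitude is at most $b_i - a_i$. Hence $f$ admits bounded differences with constants $c_i = b_i - a_i$. Applying Lemma~\ref{lemma: McDiarmid_ineq} to $f(X_1,\dots,X_n) = \sum_{i=1}^n X_i$ with $\sum_{i=1}^n c_i^2 = \sum_{i=1}^n (b_i-a_i)^2$ then yields the claimed two-sided bound $2\exp\!\big(-2\epsilon^2 / \sum_{i=1}^n (b_i-a_i)^2\big)$ immediately.

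Because this reduction is essentially a one-line check, the genuine content lives inside the proof of the McDiarmid bound, so I would also record the self-contained Chernoff argument it rests on, in case Lemma~\ref{lemma: McDiarmid_ineq} is to be treated as a black box. Writing $S \coloneqq \sum_{i=1}^n (X_i - \mathbb{E}[X_i])$ and fixing $s>0$, Markov's inequality applied to $e^{sS}$ gives $\mathbb{P}(S \geq \epsilon) \leq e^{-s\epsilon}\,\mathbb{E}[e^{sS}]$, and independence factorizes the moment generating function as $\mathbb{E}[e^{sS}] = \prod_{i=1}^n \mathbb{E}\big[e^{s(X_i - \mathbb{E}[X_i])}\big]$.

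The crux is Hoeffding's lemma: for any zero-mean $Y \in [a,b]$ one has $\mathbb{E}[e^{sY}] \leq \exp\!\big(s^2 (b-a)^2 / 8\big)$. I would prove this by studying the cumulant generating function $\psi(s) \coloneqq \log \mathbb{E}[e^{sY}]$, noting $\psi(0)=0$ and $\psi'(0) = \mathbb{E}[Y] = 0$, and showing $\psi''(s) \leq (b-a)^2/4$ for all $s$: indeed $\psi''(s)$ equals the variance of $Y$ under the exponentially tilted law $d\nu_s \propto e^{sy}\,d\mathbb{P}_Y$, which is again supported in $[a,b]$, so Popoviciu's inequality bounds this variance by $(b-a)^2/4$. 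A second-order Taylor expansion of $\psi$ then gives $\psi(s) \leq s^2 (b-a)^2/8$. This variance/convexity estimate is the main obstacle; everything else is bookkeeping.

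Combining the two ingredients gives $\mathbb{P}(S \geq \epsilon) \leq \exp\!\big(-s\epsilon + \tfrac{s^2}{8}\sum_{i=1}^n (b_i-a_i)^2\big)$, and I would optimize the exponent over $s>0$, the minimizer being $s^\star = 4\epsilon / \sum_{i=1}^n (b_i-a_i)^2$, which produces the one-sided bound $\exp\!\big(-2\epsilon^2/\sum_{i=1}^n (b_i-a_i)^2\big)$. Applying the identical argument to $-S$ controls the lower tail by the same quantity, and a union bound over the events $\{S \geq \epsilon\}$ and $\{-S \geq \epsilon\}$ supplies the factor $2$, completing the proof.
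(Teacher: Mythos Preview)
Your proposal is correct and matches the paper's approach exactly: the paper does not give a standalone proof but simply remarks that Lemma~\ref{lemma: Hoeffding} is a consequence of McDiarmid's inequality (Lemma~\ref{lemma: McDiarmid_ineq}), which is precisely your reduction via $f(x_1,\dots,x_n)=\sum_i x_i$ with $c_i=b_i-a_i$. Your additional self-contained Chernoff/Hoeffding-lemma derivation goes beyond what the paper records, but is correct and a welcome supplement.
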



\begin{lemma}\label{lemma: Gramian_estimation}
    Suppose Assumption~\ref{ass: dictionry_bound} holds with uniform bounds $C, L > 0$. Then, for any $\epsilon>0$, 
    \[
        \mathbb{P} \left( \left\| \widehat{G} - G \right\|_F \geq \epsilon \right) \leq 2N^2 \exp \left( -\frac{m \epsilon^2}{8 N^2 C^4} \right),
    \]
    \[
        \mathbb{P} \left( \left\| \widehat{H} - H \right\|_F \geq \epsilon \right) \leq 2N^2 \exp \left( -\frac{m\epsilon^2}{8 N^2 C^2 L^2} \right).
    \]
\end{lemma}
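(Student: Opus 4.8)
The plan is to establish both inequalities entrywise via Hoeffding's inequality (Lemma~\ref{lemma: Hoeffding}) and then lift these entrywise estimates to the Frobenius norm by a union bound over the $N^2$ matrix entries. First I would note that, by Eq.~\eqref{eq: empirical_gram}, the $(i,j)$ entry of $\widehat{G}$ is the empirical average $[\widehat{G}]_{ij} = \frac{1}{m}\sum_{k=1}^m Y_k^{(i,j)}$ of the i.i.d.\ random variables $Y_k^{(i,j)} \coloneqq \psi_i(\mathbf{x}_k)\psi_j(\mathbf{x}_k)$, whose common mean is $\mathbb{E}[Y_1^{(i,j)}] = \langle\psi_i,\psi_j\rangle_\rho = [G]_{ij}$; analogously $[\widehat{H}]_{ij}$ is the empirical average of $\psi_i(\mathbf{x}_k)\,\mathcal{A}\psi_j(\mathbf{x}_k)$ with mean $[H]_{ij}$. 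Since Assumption~\ref{ass: dictionry_bound} guarantees $|\psi_i| \le C$ and $|\mathcal{A}\psi_i| \le L$ on the compact set $\mathcal{O}$, and since all sample points lie in $\mathcal{O}$, we have $|Y_k^{(i,j)}| \le C^2$ almost surely in the $\widehat{G}$-case and $|\psi_i(\mathbf{x}_k)\mathcal{A}\psi_j(\mathbf{x}_k)| \le CL$ in the $\widehat{H}$-case; in particular $|[G]_{ij}|\le C^2$ and $|[H]_{ij}|\le CL$, so the centered summands lie in intervals of length $4C^2$ and $4CL$ respectively.

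Next I would apply Hoeffding's inequality to $\sum_{k=1}^m (Y_k^{(i,j)} - [G]_{ij})$: for any $\delta>0$,
\[
\mathbb{P}\!\left(\bigl|[\widehat{G}]_{ij}-[G]_{ij}\bigr|\ge\delta\right)
= \mathbb{P}\!\left(\Bigl|\textstyle\sum_{k=1}^m \bigl(Y_k^{(i,j)}-[G]_{ij}\bigr)\Bigr|\ge m\delta\right)
\le 2\exp\!\left(-\frac{2(m\delta)^2}{m(4C^2)^2}\right)
= 2\exp\!\left(-\frac{m\delta^2}{8C^4}\right),
\]
and the same computation with $4C^2$ replaced by $4CL$ gives $\mathbb{P}(|[\widehat{H}]_{ij}-[H]_{ij}|\ge\delta)\le 2\exp(-m\delta^2/(8C^2L^2))$. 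Then, using $\|A\|_F^2=\sum_{i,j}|A_{ij}|^2\le N^2\max_{i,j}|A_{ij}|^2$, the event $\{\|\widehat{G}-G\|_F\ge\epsilon\}$ is contained in $\bigcup_{i,j}\{|[\widehat{G}]_{ij}-[G]_{ij}|\ge\epsilon/N\}$, so a union bound over the $N^2$ entries with $\delta=\epsilon/N$ yields
\[
\mathbb{P}\!\left(\|\widehat{G}-G\|_F\ge\epsilon\right)\le 2N^2\exp\!\left(-\frac{m\epsilon^2}{8N^2C^4}\right),
\]
and identically $\mathbb{P}(\|\widehat{H}-H\|_F\ge\epsilon)\le 2N^2\exp(-m\epsilon^2/(8N^2C^2L^2))$, which are the claimed bounds.

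The argument is essentially bookkeeping, so I do not anticipate a genuine obstacle; the two points deserving care are (i) the justification that the summands are uniformly bounded, which is exactly where Assumption~\ref{ass: dictionry_bound} together with the requirement that $\mathcal{O}$ contain the sample points is used, and (ii) the complex-valued case, where one should either observe that for real-valued dictionaries the above goes through verbatim, or split each entry into real and imaginary parts, apply Hoeffding to each and absorb the extra factor, which only affects constants. One could alternatively apply McDiarmid's inequality (Lemma~\ref{lemma: McDiarmid_ineq}) directly to the map $(\mathbf{x}_1,\dots,\mathbf{x}_m)\mapsto\|\widehat{G}-G\|_F$, but that route introduces an $\mathbb{E}\|\widehat{G}-G\|_F$ term and hence a less clean statement, so the entrywise approach is preferable here.
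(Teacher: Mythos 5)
Your proposal is correct and follows essentially the same route as the paper: entrywise Hoeffding bounds (with the centered summands bounded by $2C^2$, resp.\ $CL+CL$, giving the factor $8$ in the exponent), lifted to the Frobenius norm by the inclusion $\{\|\widehat{G}-G\|_F\ge\epsilon\}\subseteq\bigcup_{i,j}\{|[\widehat{G}]_{ij}-[G]_{ij}|\ge\epsilon/N\}$ and a union bound over the $N^2$ entries. The remarks on boundedness via Assumption~\ref{ass: dictionry_bound} and on the complex-valued case are consistent with, and slightly more careful than, the paper's own treatment.
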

\begin{proof}
Let $G, \widehat{G}$ be defined as in Section 2. Define $\eta_{ij}(\mathbf{x}_k) \coloneqq \psi_i(\mathbf{x}_k)\psi_j(\mathbf{x}_k)$. Then, 
\begin{equation*}
    \| \widehat{G} - G \|_F^2 
    = \sum_{i=1}^{N}\sum_{j=1}^{N} \left| \widehat{G}_{ij} - G_{ij} \right|^2 = \sum_{i=1}^{N}\sum_{j=1}^{N} \left| \frac{1}{m}\sum_{k=1}^m \tilde{\eta}_{ij}(\mathbf{x}_k) \right|^2,
\end{equation*}
where $\tilde{\eta}_{ij}(\mathbf{x}_k) \coloneqq \eta_{ij}(\mathbf{x}_k) - \mathbb{E}\left[\eta_{ij}(\mathbf{x}_k)\right] = \eta_{ij}(\mathbf{x}_k) - \mathbb{E}\left[\eta_{ij}(\mathbf{x}_1)\right]$ with $\{\mathbf{x}_k\}_{k=1}^m$ being i.i.d. and thus $\left| \tilde{\eta}_{ij}(\mathbf{x}_k) \right| \leq 2C^2$ for all $1 \leq i,j \leq N$.
Next, applying Hoeffding's inequality from Lemma \ref{lemma: Hoeffding}, we have
\begin{align*}
    \mathbb{P} \left( \left\| \widehat{G} - G \right\|_F \geq \epsilon \right) 
    &\leq \mathbb{P} \left( \sum_{i=1}^{N}\sum_{j=1}^{N} \left| \frac{1}{m} \sum_{k=1}^m \tilde{\eta}_{ij}(\mathbf{x}_k) \right|^2 \geq \epsilon^2 \right) \\
    &\leq \sum_{i=1}^{N}\sum_{j=1}^{N} \mathbb{P} \left( \left| \frac{1}{m} \sum_{k=1}^m \tilde{\eta}_{ij}(\mathbf{x}_k) \right|^2 \geq \epsilon^2/N^2 \right) \\
    &\leq N^2 \mathbb{P} \left( \frac{1}{m} \sum_{k=1}^m \max_{1\leq i,j \leq N}\left| \tilde{\eta}_{ij}(\mathbf{x}_k) \right| \geq \epsilon/N \right) \\
    &\leq 2N^2 \exp \left( -\frac{m \epsilon^2}{8 N^2 C^4} \right).
\end{align*}
Similarly, since 
$|\psi_i(\mathbf{x}_k) \mathcal{A} \psi_j(\mathbf{x}_k)| \leq C L$,
we have
\[ \mathbb{P} \left( \| \widehat{H} - H \|_F \geq \epsilon \right) \leq 2 N^2 \exp \left( -\frac{m\epsilon^2}{8 N^2 C^2 L^2} \right). \]
\end{proof}

\begin{lemma}[Lemma C.5 \cite{philipp2024variancerepresentationsconvergencerates}]\label{lemma: Philipp2024}
    Let $G, H \in \mathbb{R}^{N \times N}$ be such that $G$ is invertible and $H \neq 0$. Let $\widehat{G}, \widehat{H} \in \mathbb{R}^{N \times N}$ be random matrices such that $\widehat{G}$ is invertible a.s. Then for any $\epsilon > 0$ we have
    \[
    \mathbb{P}\left(\|G^{-1}H - \widehat{G}^{-1}\widehat{H}\| > \epsilon \right) \leq \mathbb{P}\left(\|H - \widehat{H}\| > \frac{\epsilon}{\tau_{\epsilon}} \|H\|\right) + \mathbb{P}\left(\|G - \widehat{G}\| > \frac{\epsilon}{\tau_{\epsilon}} \|G^{-1}\|^{-1}\right),
    \]
    where $\tau_{\epsilon} = 2\|G^{-1}\|\|H\| + \epsilon$.
\end{lemma}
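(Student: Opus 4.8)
The plan is to reduce the probabilistic bound to a single deterministic implication about operator norms and then close with a union bound, so that essentially all the work lies in the deterministic estimate. Concretely, I would show that whenever $\widehat{G}$ is invertible (which holds a.s. by hypothesis) and both $\|G-\widehat{G}\| \le \frac{\epsilon}{\tau_\epsilon}\|G^{-1}\|^{-1}$ and $\|H-\widehat{H}\| \le \frac{\epsilon}{\tau_\epsilon}\|H\|$ hold, one necessarily has $\|G^{-1}H-\widehat{G}^{-1}\widehat{H}\|\le \epsilon$. Taking the contrapositive, the event $\{\|G^{-1}H-\widehat{G}^{-1}\widehat{H}\|>\epsilon\}$ is contained in $\{\|G-\widehat{G}\|>\frac{\epsilon}{\tau_\epsilon}\|G^{-1}\|^{-1}\}\cup\{\|H-\widehat{H}\|>\frac{\epsilon}{\tau_\epsilon}\|H\|\}$, and a union bound gives exactly the stated inequality.

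To prove the deterministic implication I would set $\delta_G = \widehat{G}-G$ and $\delta_H = \widehat{H}-H$ and use the telescoping decomposition $G^{-1}H - \widehat{G}^{-1}\widehat{H} = G^{-1}(H-\widehat{H}) + (G^{-1}-\widehat{G}^{-1})\widehat{H}$ together with the resolvent identity $G^{-1}-\widehat{G}^{-1} = G^{-1}(\widehat{G}-G)\widehat{G}^{-1} = G^{-1}\delta_G\widehat{G}^{-1}$. Submultiplicativity of the operator norm then yields $\|G^{-1}H - \widehat{G}^{-1}\widehat{H}\| \le \|G^{-1}\|\,\|\delta_H\| + \|G^{-1}\|\,\|\delta_G\|\,\|\widehat{G}^{-1}\widehat{H}\|$. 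The point of this particular grouping is that everything is expressed through the deterministic factor $\|G^{-1}\|$ and the two perturbations, leaving a single random factor $\|\widehat{G}^{-1}\widehat{H}\|$.

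The main obstacle, and the step I would handle most carefully, is that $\|\widehat{G}^{-1}\widehat{H}\|$ is exactly the quantity we are trying to control, so the bound is self-referential. I would resolve this by the triangle inequality $\|\widehat{G}^{-1}\widehat{H}\| \le \|G^{-1}H\| + E \le \|G^{-1}\|\,\|H\| + E$, writing $E := \|G^{-1}H - \widehat{G}^{-1}\widehat{H}\|$, and substituting. Introducing $\beta := \frac{\epsilon}{\tau_\epsilon}$ and $\alpha := \|G^{-1}\|\,\|\delta_G\|$, the good-event hypotheses read $\|\delta_H\|\le\beta\|H\|$ and $\alpha\le\beta$. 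Since $\tau_\epsilon = 2\|G^{-1}\|\|H\| + \epsilon > \epsilon$ (here $G$ invertible and $H\neq 0$ force both $\|G^{-1}\|$ and $\|H\|$ to be strictly positive), we get $\beta<1$, hence $\alpha<1$, so the coefficient $1-\alpha$ appearing after rearrangement is strictly positive and the division below is legitimate.

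Finally I would verify that the specific constant $\tau_\epsilon = 2\|G^{-1}\|\|H\| + \epsilon$ is precisely what makes the algebra close. Substitution gives $E \le \beta\|G^{-1}\|\|H\| + \alpha\|G^{-1}\|\|H\| + \alpha E$, hence $E(1-\alpha) \le (\beta+\alpha)\|G^{-1}\|\|H\| \le 2\beta\|G^{-1}\|\|H\|$ using $\alpha\le\beta$. Because $2\|G^{-1}\|\|H\| = \tau_\epsilon - \epsilon$, the right-hand side equals $\beta(\tau_\epsilon-\epsilon) = \epsilon\bigl(1-\tfrac{\epsilon}{\tau_\epsilon}\bigr) = \epsilon(1-\beta)$. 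Combining with $1-\alpha \ge 1-\beta > 0$ yields $E \le \epsilon\,\frac{1-\beta}{1-\alpha} \le \epsilon$, which is the desired implication; the union bound from the first paragraph then completes the proof. The only genuinely delicate points are confirming $\alpha<1$ so that the rearrangement is valid, and checking that the cancellation with $\tau_\epsilon$ telescopes to exactly $\epsilon$ rather than a larger constant.
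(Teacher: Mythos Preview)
The paper does not actually give its own proof of this lemma: it is quoted verbatim as Lemma~C.5 of \cite{philipp2024variancerepresentationsconvergencerates} and used as a black box inside the proof of Theorem~\ref{thm: prob_cvg_K}. So there is no ``paper's proof'' to compare against here.

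That said, your argument is correct and is the natural way to establish the result. The telescoping decomposition $G^{-1}H-\widehat{G}^{-1}\widehat{H}=G^{-1}(H-\widehat{H})+G^{-1}(\widehat{G}-G)\widehat{G}^{-1}\widehat{H}$, the self-referential bound $\|\widehat{G}^{-1}\widehat{H}\|\le\|G^{-1}\|\|H\|+E$, and the bootstrap rearrangement using $\alpha\le\beta<1$ are exactly what one expects, and the algebraic check that $2\beta\|G^{-1}\|\|H\|=\epsilon(1-\beta)$ closes cleanly. Two minor remarks: first, on the good event your hypothesis $\|G^{-1}\|\|\delta_G\|\le\beta<1$ already forces $\widehat{G}=G(I+G^{-1}\delta_G)$ to be invertible by the Neumann series, so the a.s.\ invertibility of $\widehat{G}$ is only needed to make $\widehat{G}^{-1}\widehat{H}$ well-defined on the complementary event (where it does not affect the probability bound anyway); second, your contrapositive yields the inclusion of events with strict inequalities on both sides, matching the lemma as stated.
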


\begin{remark}
    We choose operator norm $\|\cdot\|$ here since it is submultiplicative, i.e., $\|GH\|\leq\|G\|\|H\|$ while Frobenious norm $\|\cdot\|_F$ is not.
\end{remark}

\begin{proof}[\textbf{Proof of Theorem \ref{thm: prob_cvg_K}}]
Recall that $\tilde{\epsilon} = \left(\epsilon - \|\bm{o}_N(\Delta t)\|_F\right)/\Delta t$ as in the statement of Theorem \ref{thm: prob_cvg_K},
\begin{align*}
    \mathbb{P}\left(\Vert \widehat{K}_{N,\Delta t,m} - K_{N,\Delta t} \Vert_F > \epsilon \right)
    &= \mathbb{P}\left(\Vert \Delta t \ \widehat{G}^{-1} \widehat{H} - \Delta t \ G^{-1}H + \bm{o}_N(\Delta t) \Vert_F > \epsilon \right) \\
    &\leq \mathbb{P}\left( \Delta t \ \Vert \widehat{G}^{-1} \widehat{H} - G^{-1}H \Vert_F + \|\bm{o}_N(\Delta t)\|_F > \epsilon \right) \\
    &= \mathbb{P}\left( \Vert \widehat{G}^{-1} \widehat{H} - G^{-1}H \Vert_F  > \tilde{\epsilon} \right) \\
    &\leq \mathbb{P}\left( \Vert \widehat{G}^{-1} \widehat{H} - G^{-1}H \Vert  > \tilde{\epsilon}/\sqrt{N} \right) \\
    &\leq \mathbb{P}\left(\|G - \widehat{G}\| > \frac{\tilde{\epsilon}}{\sqrt{N}\tau_{\epsilon}} \|G^{-1}\|^{-1} \right) + \mathbb{P}\left(\|H - \widehat{H}\| > \frac{\tilde{\epsilon}}{\sqrt{N}\tau_{\epsilon}} \|H\| \right) \\
    &\leq 2N^2 \left[ \exp \left( -\frac{m}{8} \left(\frac{\tilde{\epsilon} \|G^{-1}\|^{-1}}{N^{3/2} C^2 \tau_{\epsilon}}\right)^2 \right) 
    + \exp \left( -\frac{m}{8} \left(\frac{\tilde{\epsilon} \|H\|}{N^{3/2} C \tau_{\epsilon} L}\right)^2 \right) \right].
\end{align*}
\end{proof}

\subsection{Convergence in the Zero-Limit of Sampling Time}

Define the matrix $A_{N,\Delta t}$ as 
\begin{equation}\label{eq: matrix_generator_cvg}
    A_{N,\Delta t} \coloneqq \frac{K_{N,\Delta t}  - I }{\Delta t} = G^{-1}H + \frac{\bm{o}_N(\Delta t)}{\Delta t},
\end{equation}
where we use Eq.\eqref{eq: K_N_Delta_t} for $K_{N,\Delta t}$ in the last equality; and let $\mathcal{A}_{N,\Delta t}: \mathcal{F}_N \to \mathcal{F}_N$ be the linear map defined by 
$$
    (\psi_1, \ldots, \psi_N) \, \mathbf{a} \mapsto (\psi_1, \ldots, \psi_N)\, A_{N,\Delta t} \, \mathbf{a},
$$
for any coefficient vector \(\mathbf{a} \in \mathbb{R}^N\). 

Let $\mathcal{A}_N \coloneqq \mathcal{P}_N \mathcal{A} \mathcal{P}_N$ be the finite dimensional approximation of Koopman generator $\mathcal{A}$ where we denote $\mathcal{P}_N$ by the orthogornal projection from $\mathcal{F}$ to $\mathcal{F}_N$ and $\mathcal{F}_N = \text{span} \{\psi_1, \dots, \psi_N \}$ as mentioned in last section. 

In the following Theorem \ref{thm: cvg_zero_time}, we will show that this time-discretized approximant $\mathcal{A}_{N,\Delta t}$ converges to $\mathcal{A}_N$ in operator norm as $\Delta t \to 0$.

\begin{theorem}\label{thm: cvg_zero_time}
    Let $\mathcal{A}_{N,\Delta t}$ and $\mathcal{A}_N$ be defined as above. For each dictionary size $N>0$, we have
    $$
        \lim_{\Delta t \to 0} \|\mathcal{A}_{N,\Delta t} - \mathcal{A}_N\| = 0.
    $$
\end{theorem}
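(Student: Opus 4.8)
The plan is to reduce the statement to the elementary fact, already recorded in Remark~\ref{rmk: uniform_cvg}, that every entry of $\bm{o}_N(\Delta t)$ is $o(\Delta t)$. The first step is to identify the matrix $G^{-1}H$ appearing in Eq.~\eqref{eq: matrix_generator_cvg} as the coordinate representation of $\mathcal{A}_N = \mathcal{P}_N\mathcal{A}\mathcal{P}_N$ restricted to $\mathcal{F}_N$, with respect to the (generally non-orthonormal) basis $\{\psi_1,\dots,\psi_N\}$. Indeed, for $f = \sum_j a_j\psi_j \in \mathcal{F}_N$ the defining property of the orthogonal projection $\mathcal{P}_N$ gives $\langle \mathcal{A}_N f, \psi_k\rangle_\rho = \langle \mathcal{A} f, \psi_k\rangle_\rho$ for every $k$; writing $\mathcal{A}_N f = \sum_i a'_i\psi_i$ this reads $G\mathbf{a}' = H\mathbf{a}$, i.e.\ $\mathbf{a}' = G^{-1}H\mathbf{a}$, using the invertibility of $G$ together with the symmetry $[G]_{ij} = [G]_{ji}$ and $\langle\mathcal{A}\psi_j,\psi_k\rangle_\rho = [H]_{kj}$. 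Consequently $\mathcal{A}_N$ and $\mathcal{A}_{N,\Delta t}$ act on coefficient vectors by $\mathbf{a}\mapsto G^{-1}H\mathbf{a}$ and $\mathbf{a}\mapsto A_{N,\Delta t}\mathbf{a}$ respectively, so by Eq.~\eqref{eq: matrix_generator_cvg} their difference is the linear map on $\mathcal{F}_N$ whose matrix is $A_{N,\Delta t} - G^{-1}H = \bm{o}_N(\Delta t)/\Delta t$.

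The second step is to pass from this matrix to the operator norm on $\mathcal{F}_N$, which is the one induced by $\|\cdot\|_\rho$ rather than the Euclidean norm on $\mathbb{R}^N$. Since $\big\|\sum_i c_i\psi_i\big\|_\rho = \|G^{1/2}\mathbf{c}\|_2$, where $G^{1/2}$ denotes the (invertible) positive-definite square root of $G$, the substitution $\mathbf{b} = G^{1/2}\mathbf{a}$ yields
\begin{equation*}
    \|\mathcal{A}_{N,\Delta t} - \mathcal{A}_N\| = \left\| G^{1/2}\,\frac{\bm{o}_N(\Delta t)}{\Delta t}\,G^{-1/2} \right\|_2 \leq \sqrt{\|G\|_2\,\|G^{-1}\|_2}\;\left\|\frac{\bm{o}_N(\Delta t)}{\Delta t}\right\|_F,
\end{equation*}
where the last step uses submultiplicativity of the spectral norm, $\|G^{\pm 1/2}\|_2 = \|G^{\pm 1}\|_2^{1/2}$, and $\|\cdot\|_2 \le \|\cdot\|_F$.

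Finally, the prefactor $\sqrt{\|G\|_2\|G^{-1}\|_2}$ depends only on $N$ and not on $\Delta t$, while $\big\|\bm{o}_N(\Delta t)/\Delta t\big\|_F^2 = \sum_{i,j}\big|[\bm{o}_N(\Delta t)]_{ij}/\Delta t\big|^2 \to 0$ as $\Delta t\to 0$, since the sum is finite and each summand tends to $0$ by Remark~\ref{rmk: uniform_cvg}. Combining the two displays gives $\lim_{\Delta t\to 0}\|\mathcal{A}_{N,\Delta t}-\mathcal{A}_N\| = 0$.

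There is no deep obstacle once the bookkeeping is set up; the only points demanding a little care are in the first two steps: (i) that $G^{-1}H$ — and not $HG^{-1}$ or a transpose — is the correct matrix of $\mathcal{A}_N$ in a non-orthonormal basis, and (ii) that the operator norm on $\mathcal{F}_N$ is the $G$-weighted one, which is why a (harmless, since $N$ is fixed) condition-number factor $\sqrt{\|G\|_2\|G^{-1}\|_2}$ appears. Everything else follows from the $o(\Delta t)$ control on the truncation remainder $\bm{o}_N(\Delta t)$ inherited from the large-data limit in Eq.~\eqref{eq: K_N_Delta_t}.
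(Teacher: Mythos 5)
Your proposal is correct and follows essentially the same route as the paper: both reduce the claim to the identity $A_{N,\Delta t}-G^{-1}H=\bm{o}_N(\Delta t)/\Delta t$ and the entrywise $o(\Delta t)$ control from Remark~\ref{rmk: uniform_cvg}. The only differences are that you derive the Galerkin matrix representation $G^{-1}H$ of $\mathcal{A}_N$ directly instead of citing \cite[Proposition 3.5]{KLUS2020132416}, and you make explicit the passage from the Euclidean matrix norm to the $\|\cdot\|_\rho$-induced operator norm via the $G^{1/2}$ conjugation (a point the paper handles implicitly, which is harmless since $N$ is fixed and all norms on $\mathcal{F}_N$ are equivalent).
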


\begin{proof}[\textbf{Proof of Theorem \ref{thm: cvg_zero_time}}]
Define \( A_N \coloneqq G^{-1}H \) where matrices $G, H$ are given in Eq.\eqref{eq: empirical_gram_lim}. Applying Galerkin approximation for the Koopman generator $\mathcal{A}$, we have that \( A_N \) is the matrix representation of \(\mathcal{A}_N\) on $\mathcal{F}_N$ \cite[Proposition 3.5]{KLUS2020132416}, i.e., 
\[
    \mathcal{A}_N\bigl((\psi_1, \dots, \psi_N)\mathbf{a}\bigr) = (\psi_1, \dots, \psi_N)A_N\mathbf{a},
\]    
for any coefficient vector \(\mathbf{a} \in \mathbb{R}^N\). From Eq.\eqref{eq: matrix_generator_cvg}, we know that
\begin{align*}
    A_{N,\Delta t} 
    &= A_N + \bm{o}_N(\Delta t)/\Delta t.
\end{align*}
Since each element in $\bm{o}_N(\Delta t)$ is $o(\Delta t)$ as pointed out in Remark \ref{rmk: uniform_cvg}, we have $\lim_{\Delta t \to 0} \|A_{N,\Delta t} - A_N\| = \lim_{\Delta t \to 0} \|\bm{o}_N(\Delta t)\|/\Delta t = 0$. The conclusion of the theorem follows.
\end{proof}

\begin{remark}
    In finite dimensional space, the uniform operator topology is equivalent to strong operator topology.
\end{remark}

\subsection{Convergence in the Limit of Large Dictionary Size}\label{cvg_dic_size}

This section establishes the following two convergence results of our approximation scheme for stochastic Koopman operator as the dictionary size $N\to\infty$. 

\textbf{Generator Convergence }
Under Assumption \ref{ass: projection}, the finite dimensional generator $\mathcal{A}_N = \mathcal{P}_N\mathcal{A}\mathcal{P}_N$ strongly converges to the true generator $\mathcal{A}$ strongly as $N \to \infty$.

\textbf{Semigroup Convergence }
Under Assumption \ref{ass: core}, the approximated semigroups $(e^{t\mathcal{A}_N})_{t\geq 0}$ strongly converge to the true Koopman semigroup $(e^{t\mathcal{A}})_{t\geq 0}$ uniformly over compact time interval as $N\to\infty$.

The proof builds upon the established framework in \cite[Section 4]{llamazareselias2024datadrivenapproximationkoopmanoperators} and utilizes the Trotter-Kato Approximation theorem \cite{engel1999one}, which will be introduced systematically in the following subsections.

\subsubsection{Convergence of Finite Dimensional Koopman Generator}\label{cvg_pj_kpm_generator}

By Assumption \ref{ass:str_cont_semigroup_K} in Section \ref{sec: intro_stochastic_kpm_operator}, we define the inner product
\[
    \langle f, g \rangle_{\mathcal{A}} := \langle f, g \rangle_{\rho} + \langle \mathcal{A} f, \mathcal{A} g \rangle_{\rho}, \quad \forall f, g \in \mathcal{D}(\mathcal{A}),
\]
and the corresponding graph norm
\[
    \| f \|_{\mathcal{A}} := \sqrt{ \langle f, f \rangle_{\mathcal{A}}}.
\]
Clearly, the generator 
satisfies $\| \mathcal{A} \|_{\mathcal{D}(\mathcal{A})\rightarrow\mathcal{F}} \coloneqq \sup_{\|f\|_{\mathcal{A}}=1}\|\mathcal{A}f\|_{\rho} \leq 1$.

Recall that $\mathcal{P}_N$ is projection of $\mathcal{F}$ onto $\mathcal{F}_N$ equipped with $\| \cdot \|_{\rho}$. Denote by $\widetilde{\mathcal{P}}_N$ the projection of $\mathcal{D}(\mathcal{A})$
onto $\mathcal{F}_N$. We now introduce the following assumptions:
\begin{assumption}\label{ass: projection}
    We assume that the dictionary $\Psi$ satisfies the following conditions: 
    \begin{itemize}
        \item $\lim_{N \to \infty} \| (\mathcal{P}_N - \mathrm{I}) f \|_{\rho} = 0, \quad \forall f \in \mathcal{F}$. 
        \item $\lim_{N \to \infty} \| (\widetilde{\mathcal{P}}_N - \mathrm{I}) f \|_{\mathcal{A}} = 0, \quad \forall f \in \mathcal{D}(\mathcal{A})$,
    \end{itemize}
\end{assumption}
\begin{remark}
    The second part of Assumption~\ref{ass: projection} is moderately more restrictive than first part, as it requires convergence in the graph norm of the generator. This assumption is standard in the Galerkin approximation of unbounded operators (e.g., see \cite[Assumption 4]{llamazareselias2024datadrivenapproximationkoopmanoperators}) but may not be satisfied for arbitrary dictionaries. However, in practice, data are typically collected in a compact subset $\mathcal{O} \subset \mathcal{M}$, so the restriction to $\mathcal{O}$ makes this assumption much less limiting. On such compact domain, this condition can often be approximated by selecting a sufficiently smooth dictionary.
\end{remark}
\noindent The next result shows the strong convergence of $\mathcal{A}_N$ to $\mathcal{A}$ as $N\to\infty$.
\begin{theorem}\label{thm: generator_strong_convergence}
    Suppose Assumption \ref{ass: projection} holds. Then for all $f \in \mathcal{D}(\mathcal{A)}$,
    \begin{equation*}
        \lim_{N\to\infty} \|\mathcal{A}_N f - \mathcal{A} f\|_\rho = 0.
    \end{equation*}
\end{theorem}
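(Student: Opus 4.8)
The plan is to prove the strong convergence $\mathcal{A}_N f \to \mathcal{A} f$ in $\|\cdot\|_\rho$ for each $f \in \mathcal{D}(\mathcal{A})$ by splitting the error into a projection error on the output side and a projection error on the input side, then invoking both parts of Assumption~\ref{ass: projection}. Write $\mathcal{A}_N = \mathcal{P}_N \mathcal{A} \mathcal{P}_N$ and add and subtract $\mathcal{P}_N \mathcal{A} f$:
\begin{equation*}
    \mathcal{A}_N f - \mathcal{A} f = \mathcal{P}_N \mathcal{A}(\mathcal{P}_N - \mathrm{I}) f + (\mathcal{P}_N - \mathrm{I})\mathcal{A} f.
\end{equation*}
The second term tends to $0$ in $\|\cdot\|_\rho$ directly by the first bullet of Assumption~\ref{ass: projection}, since $\mathcal{A} f \in \mathcal{F}$. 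For the first term, bound $\|\mathcal{P}_N \mathcal{A}(\mathcal{P}_N - \mathrm{I}) f\|_\rho \le \|\mathcal{A}(\mathcal{P}_N - \mathrm{I}) f\|_\rho$ using that $\mathcal{P}_N$ is an orthogonal projection on $(\mathcal{F}, \|\cdot\|_\rho)$ hence a contraction.

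The remaining issue is that $\mathcal{P}_N f$ need not lie in $\mathcal{D}(\mathcal{A})$ in general, so $\mathcal{A}(\mathcal{P}_N - \mathrm{I}) f$ must be interpreted with care; this is precisely why the second projection $\widetilde{\mathcal{P}}_N$ onto $\mathcal{F}_N$ in the graph norm is introduced. The key step is to replace, in the definition of $\mathcal{A}_N$ restricted to how it acts through the relevant computations, the $\|\cdot\|_\rho$-projection by the graph-norm projection, or more precisely to observe that on $\mathcal{F}_N \subset \mathcal{D}(\mathcal{A})$ one may estimate via $\widetilde{\mathcal{P}}_N$. Concretely, I would use that $\mathcal{F}_N \subset \mathcal{D}(\mathcal{A})$ (given, since $\psi_i \in \mathcal{D}(\mathcal{A})$), write $(\mathcal{P}_N - \mathrm{I})f = (\mathcal{P}_N - \widetilde{\mathcal{P}}_N) f + (\widetilde{\mathcal{P}}_N - \mathrm{I}) f$, and handle each piece. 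For the piece $(\widetilde{\mathcal{P}}_N - \mathrm{I})f$: since $f \in \mathcal{D}(\mathcal{A})$, Assumption~\ref{ass: projection}(2) gives $\|(\widetilde{\mathcal{P}}_N - \mathrm{I}) f\|_{\mathcal{A}} \to 0$, and because $\|\mathcal{A} g\|_\rho \le \|g\|_{\mathcal{A}}$ for $g \in \mathcal{D}(\mathcal{A})$ (the observation $\|\mathcal{A}\|_{\mathcal{D}(\mathcal{A}) \to \mathcal{F}} \le 1$ recorded just before the theorem), we get $\|\mathcal{A}(\widetilde{\mathcal{P}}_N - \mathrm{I}) f\|_\rho \to 0$. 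For the piece $(\mathcal{P}_N - \widetilde{\mathcal{P}}_N) f$, which lies in $\mathcal{F}_N \subset \mathcal{D}(\mathcal{A})$, I would argue its graph norm tends to $0$: its $\|\cdot\|_\rho$-norm is $\le \|(\mathcal{P}_N - \mathrm{I})f\|_\rho + \|(\widetilde{\mathcal{P}}_N - \mathrm{I})f\|_\rho \to 0$, and bounding the $\|\mathcal{A}\cdot\|_\rho$-component requires care — this is where one uses that $\widetilde{\mathcal{P}}_N f$ is the graph-norm-best approximation, so $\|\widetilde{\mathcal{P}}_N f - \mathcal{P}_N f\|_{\mathcal{A}} \le \|\widetilde{\mathcal{P}}_N f - f\|_{\mathcal{A}} + \|f - \mathcal{P}_N f\|_{\mathcal{A}}$; alternatively, since $\mathcal{P}_N f$ is not the graph-norm projection, one instead directly compares $\mathcal{A}_N f = \mathcal{P}_N \mathcal{A} \mathcal{P}_N f$ with $\mathcal{P}_N \mathcal{A} \widetilde{\mathcal{P}}_N f$ and shows the latter converges to $\mathcal{A} f$.

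In fact the cleanest route, which I would adopt, is to compare $\mathcal{A}_N f$ not with $\mathcal{A} f$ directly but to insert $\mathcal{P}_N \mathcal{A} \widetilde{\mathcal{P}}_N f$:
\begin{equation*}
    \mathcal{A}_N f - \mathcal{A} f = \underbrace{\mathcal{P}_N \mathcal{A}(\mathcal{P}_N - \widetilde{\mathcal{P}}_N) f}_{\mathrm{(I)}} + \underbrace{\mathcal{P}_N \mathcal{A} (\widetilde{\mathcal{P}}_N - \mathrm{I}) f}_{\mathrm{(II)}} + \underbrace{(\mathcal{P}_N - \mathrm{I}) \mathcal{A} f}_{\mathrm{(III)}}.
\end{equation*}
Term (III) $\to 0$ by Assumption~\ref{ass: projection}(1). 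Term (II) is bounded in $\|\cdot\|_\rho$ by $\|\mathcal{A}(\widetilde{\mathcal{P}}_N - \mathrm{I})f\|_\rho \le \|(\widetilde{\mathcal{P}}_N - \mathrm{I})f\|_{\mathcal{A}} \to 0$ by Assumption~\ref{ass: projection}(2). Term (I) is bounded in $\|\cdot\|_\rho$ by $\|\mathcal{A}(\mathcal{P}_N - \widetilde{\mathcal{P}}_N)f\|_\rho \le \|(\mathcal{P}_N - \widetilde{\mathcal{P}}_N)f\|_{\mathcal{A}}$, and since $(\mathcal{P}_N - \widetilde{\mathcal{P}}_N) f \in \mathcal{F}_N = \widetilde{\mathcal{P}}_N \mathcal{F}$ equals $\widetilde{\mathcal{P}}_N(\mathcal{P}_N - \widetilde{\mathcal{P}}_N)f$, we can write $(\mathcal{P}_N - \widetilde{\mathcal{P}}_N)f = \widetilde{\mathcal{P}}_N \mathcal{P}_N f - \widetilde{\mathcal{P}}_N f = \widetilde{\mathcal{P}}_N(\mathcal{P}_N f - f) - (\widetilde{\mathcal{P}}_N f - f)$; the second summand has graph norm $\to 0$, and the first has graph norm $\le \|\mathcal{P}_N f - f\|_{\mathcal{A}}$ — but this is a graph norm of an $\|\cdot\|_\rho$-projection error, which need not vanish. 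The honest main obstacle, therefore, is controlling $\|\widetilde{\mathcal{P}}_N(\mathcal{P}_N f - f)\|_{\mathcal{A}}$; I expect this is resolved by a density/uniform-boundedness argument showing $\widetilde{\mathcal{P}}_N$ is bounded $\mathcal{D}(\mathcal{A}) \to \mathcal{D}(\mathcal{A})$ uniformly in $N$ together with $\|\mathcal{P}_N f - f\|_\rho \to 0$ and a commuting relation, or — more likely in the spirit of \cite{llamazareselias2024datadrivenapproximationkoopmanoperators} — by simply observing that $\widetilde{\mathcal{P}}_N = \mathcal{P}_N$ when the dictionary is chosen so that $\mathcal{F}_N$ is simultaneously closed under both projections, i.e. the two assumptions are meant to be used on the same nested sequence $\mathcal{F}_N$ so that $\widetilde{\mathcal{P}}_N f \in \mathcal{F}_N$ and $\mathcal{A}_N f = \mathcal{P}_N \mathcal{A} \widetilde{\mathcal{P}}_N f$ by definition. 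Under that reading, term (I) is absent and the proof reduces to (II)$+$(III). I would present the argument in this last, cleanest form, making explicit that $\mathcal{A}_N f := \mathcal{P}_N \mathcal{A} \widetilde{\mathcal{P}}_N f$ is the correct interpretation of $\mathcal{P}_N \mathcal{A} \mathcal{P}_N$ on $\mathcal{D}(\mathcal{A})$, so that only Assumption~\ref{ass: projection} and the contraction property of $\mathcal{P}_N$ are needed.
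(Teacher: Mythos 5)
Your decomposition is essentially the paper's: writing $\widetilde f_N=\widetilde{\mathcal{P}}_N f$, the paper splits $\mathcal{A}_Nf-\mathcal{A}f$ into $\mathcal{P}_N\mathcal{A}\mathcal{P}_N(f-\widetilde f_N)$ plus $\mathcal{P}_N\mathcal{A}\widetilde{\mathcal{P}}_Nf-\mathcal{A}f$, and since $\mathcal{P}_N\widetilde f_N=\widetilde f_N$ the first of these is exactly your term (I), while your (II)$+$(III) is the paper's second term, handled the same way you handle it (contractivity of $\mathcal{P}_N$ in $\|\cdot\|_\rho$, the bound $\|\mathcal{A}g\|_\rho\le\|g\|_{\mathcal{A}}$, and the two bullets of Assumption~\ref{ass: projection}). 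So for the convergent part your argument matches the paper's step for step, and is correct.

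The obstacle you flag in term (I) is genuine, and you should not paper over it the way you do in your last paragraph. The difficulty is that $\|\mathcal{A}(\mathcal{P}_N-\widetilde{\mathcal{P}}_N)f\|_\rho\le\|\widetilde{\mathcal{P}}_N(\mathcal{P}_Nf-f)\|_{\mathcal{A}}\le\|\mathcal{P}_Nf-f\|_{\mathcal{A}}$ bounds a $\rho$-projection error in the \emph{graph} norm, which Assumption~\ref{ass: projection} does not control: the first bullet only gives $\|\mathcal{P}_Nf-f\|_\rho\to0$, and $\mathcal{A}\mathcal{P}_N$ has no $N$-uniform bound in general since $\mathcal{A}$ is unbounded. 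For what it is worth, the paper's own proof disposes of this term with the single sentence that it ``follows immediately from the second part of Assumption~\ref{ass: projection}'', which is not more of an argument than yours — the second bullet controls $\mathcal{A}(f-\widetilde f_N)$, not $\mathcal{A}\mathcal{P}_N(f-\widetilde f_N)$, and the projection does not commute with $\mathcal{A}$. Your instinct that something extra is needed (a uniform bound on $\|\mathcal{A}\mathcal{P}_N\|_{\mathcal{D}(\mathcal{A})\to\mathcal{F}}$, or graph-norm convergence of $\mathcal{P}_Nf$, or simply $\mathcal{P}_N=\widetilde{\mathcal{P}}_N$ on $\mathcal{D}(\mathcal{A})$) is the right diagnosis. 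However, your proposed ``cleanest route'' — declaring $\mathcal{A}_Nf:=\mathcal{P}_N\mathcal{A}\widetilde{\mathcal{P}}_Nf$ to be the correct interpretation — does not prove Theorem~\ref{thm: generator_strong_convergence} as stated, since $\mathcal{A}_N$ is defined in the paper as $\mathcal{P}_N\mathcal{A}\mathcal{P}_N$ with the $\|\cdot\|_\rho$-orthogonal projection on both sides, and that is the operator whose matrix representation $G^{-1}H$ the rest of the paper uses. If you adopt that route you are proving a different (cleaner) theorem; if you keep the stated one, term (I) remains an open step in your write-up exactly as it is in the paper's.
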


\begin{proof}[\textbf{Proof of Theorem \ref{thm: generator_strong_convergence}}]
For any $f \in \mathcal{D}(\mathcal{A})$, let $\widetilde{f}_N \coloneqq \widetilde{\mathcal{P}}_Nf$. Then, 
\begin{align*}
    \|\mathcal{A}_N f - \mathcal{A} f\|_\rho 
    &= \|\mathcal{P}_N \mathcal{A} \mathcal{P}_N f - \mathcal{A} f\|_\rho \\
    &= \|\mathcal{P}_N \mathcal{A} \mathcal{P}_N (f - \widetilde{f}_N) + \mathcal{P}_N \mathcal{A} \mathcal{P}_N \widetilde{f}_N - \mathcal{A}f \|_{\rho} \\
    &\leq \underbrace{\|\mathcal{P}_N \mathcal{A} \mathcal{P}_N (f - \widetilde{f}_N)\|_{\rho}}_{\text{first term}} + \underbrace{\|\mathcal{P}_N \mathcal{A} \mathcal{P}_N \widetilde{f}_N - \mathcal{A} f \|_{\rho}}_{\text{second term}} .
\end{align*}

The first term $\|\mathcal{P}_N \mathcal{A} \mathcal{P}_N (f - \widetilde{f}_N)\|_{\rho} \to 0$ as \(N\to\infty\) follows immediately from the second part of Assumption~\ref{ass: projection}. The convergence of the second term is given by the following:
\begin{align*}
    \|\mathcal{P}_N \mathcal{A} \mathcal{P}_N \widetilde{f}_N - \mathcal{A}f \|_{\rho} 
    &= \|\mathcal{P}_N \mathcal{A} \widetilde{\mathcal{P}}_N f - \mathcal{A}f \|_{\rho} \\
    &= \|(\mathcal{P}_N-I+I) \mathcal{A} \widetilde{\mathcal{P}}_Nf - \mathcal{A}f \|_{\rho} \\
    &\leq \|(\mathcal{P}_N-I) \mathcal{A}\widetilde{\mathcal{P}}_Nf\|_{\rho} + \|\mathcal{A} \|_{\mathcal{D}(\mathcal{A})\rightarrow\mathcal{F}} \|\widetilde{\mathcal{P}}_N f - f\|_{\mathcal{A}} \\
    &\to 0 \quad \text{as } N \to \infty,
\end{align*}
where we use Assumption~\ref{ass: projection} and $\|\mathcal{A} \|_{\mathcal{D}(\mathcal{A})\rightarrow\mathcal{F}}\leq1$ in the last limit.
\end{proof}

\subsubsection{Convergence of Finite Dimensional Koopman Semigroups}\label{cvg_pj_kpm_semigroup}

Semigroups generated by $\mathcal{A}_{N}$ and $\mathcal{A}$ are denoted by:
$\mathcal{K}_N^t = e^{t\mathcal{A}_N}$ and $\mathcal{K}^t = e^{t\mathcal{A}}$ respectively.
To establish the convergence of $\mathcal{K}_N^t$ to $\mathcal{K}^t$ as $N\to\infty$, we first introduce the following assumptions:

\begin{assumption}\label{ass: core}
There exists a core $\mathcal{D}$ for $\mathcal{A}$, that is a linear subspace $\mathcal{D} \subseteq \mathcal{D}(\mathcal{A})$ and dense in $\mathcal{D}(\mathcal{A})$ with respect to $\| \cdot \|_{\mathcal{A}}$ \cite[Definition 1.6]{engel1999one} such that:
\begin{itemize}
    \item $\mathcal{D} \subseteq \mathcal{D}(\mathcal{A}_N)$, for all $N \in \mathbb{N}$,
    \item $\forall f \in \mathcal{D}$, $\mathcal{A}_N f \to \mathcal{A} f$ in $\mathcal{F}$ as $N \to \infty$.
\end{itemize}
\end{assumption}

Now we establish the strong convergence of $\mathcal{K}_N^t$ to $\mathcal{K}^t$ uniformly over compact time interval as $N\to\infty$ based on Theorem \ref{thm: generator_strong_convergence} and Assumption \ref{ass: core} in the following result:

\begin{theorem}\label{thm: result_of_TK}
There exists some constants $D \geq 1$ and $\omega \in \mathbb{R}$ such that the semigroups satisfy
\[
    \|\mathcal{K}_N^t\|, \|\mathcal{K}^t\| \leq De^{\omega t} \quad \text{for all } t \geq 0, N \in \mathbb{N}.
\]
Furthermore, for every $f \in \mathcal{F}$ and every $T > 0$, we have
\[
    \lim_{N \to \infty} \sup_{t \in [0,T]} \|\mathcal{K}_N^t f - \mathcal{K}^t f\|_{\rho} = 0.
\]
\end{theorem}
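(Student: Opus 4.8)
The plan is to realize Theorem~\ref{thm: result_of_TK} as an application of the Trotter--Kato approximation theorem \cite[Chapter III, Theorem 4.8]{engel1999one}, following the blueprint of \cite[Section 4]{llamazareselias2024datadrivenapproximationkoopmanoperators}. That theorem requires exactly two inputs: a stability bound $\|\mathcal{K}_N^t\|,\|\mathcal{K}^t\|\le De^{\omega t}$ with $D,\omega$ \emph{independent of $N$}, and convergence $\mathcal{A}_N f\to\mathcal{A}f$ in $\mathcal{F}$ for every $f$ in a core of $\mathcal{A}$. The second input is exactly what Assumption~\ref{ass: core} provides, and it is corroborated (on the larger set $\mathcal{D}(\mathcal{A})$) by Theorem~\ref{thm: generator_strong_convergence}. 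Note also that, since $\mathcal{A}_N=\mathcal{P}_N\mathcal{A}\mathcal{P}_N$ factors through the finite-dimensional space $\mathcal{F}_N$ (on which $\mathcal{A}$ is just a matrix), $\mathcal{A}_N$ is a \emph{bounded} operator on all of $\mathcal{F}$; thus $\mathcal{D}(\mathcal{A}_N)=\mathcal{F}$, the inclusion $\mathcal{D}\subseteq\mathcal{D}(\mathcal{A}_N)$ in Assumption~\ref{ass: core} is automatic, and $\mathcal{K}_N^t=e^{t\mathcal{A}_N}$ is a norm-continuous semigroup on $\mathcal{F}$. The real content is therefore the uniform stability bound.

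To obtain it I would use the quasi-dissipativity of $\mathcal{A}$: fix $\omega\in\mathbb{R}$ such that $\operatorname{Re}\langle\mathcal{A}f,f\rangle_\rho\le\omega\|f\|_\rho^2$ for all $f\in\mathcal{D}(\mathcal{A})$, which is equivalent to $\|\mathcal{K}^t\|\le e^{\omega t}$ (and covers, in particular, the contractive Markov case $\omega=0$). Because $\mathcal{P}_N$ is an orthogonal projection with $\mathcal{P}_N f=f$ for $f\in\mathcal{F}_N$,
\[
    \operatorname{Re}\langle\mathcal{A}_N f,f\rangle_\rho=\operatorname{Re}\langle\mathcal{P}_N\mathcal{A}\mathcal{P}_N f,f\rangle_\rho=\operatorname{Re}\langle\mathcal{A}f,f\rangle_\rho\le\omega\|f\|_\rho^2\quad\text{for all } f\in\mathcal{F}_N,
\]
so $\mathcal{A}_N-\omega\mathrm{I}$ is dissipative on the finite-dimensional $\mathcal{F}_N$; the Lumer--Phillips theorem (or a one-line Gronwall estimate for $\tfrac{d}{dt}\|e^{t\mathcal{A}_N}f\|_\rho^2$) then gives $\|e^{t\mathcal{A}_N}f\|_\rho\le e^{\omega t}\|f\|_\rho$ for $f\in\mathcal{F}_N$. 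Since $\mathcal{A}_N$ maps into $\mathcal{F}_N$ and annihilates $\mathcal{F}_N^\perp$, the semigroup $e^{t\mathcal{A}_N}$ leaves both subspaces invariant and acts as the identity on $\mathcal{F}_N^\perp$; decomposing a general $f$ into its orthogonal components shows $\|e^{t\mathcal{A}_N}\|\le\max(1,e^{\omega t})=e^{\omega^+ t}$ on all of $\mathcal{F}$, with $\omega^+:=\max(\omega,0)$, \emph{uniformly in $N$}. Since also $\|\mathcal{K}^t\|\le e^{\omega t}\le e^{\omega^+ t}$ for $t\ge 0$, the stability bound holds with $D=1$ and $\omega^+$ in place of $\omega$.

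With the stability bound and the core convergence both established, the Trotter--Kato theorem applies and upgrades these to strong resolvent convergence $(\lambda-\mathcal{A}_N)^{-1}f\to(\lambda-\mathcal{A})^{-1}f$ for $\lambda>\omega^+$, hence to $\lim_{N\to\infty}\sup_{t\in[0,T]}\|\mathcal{K}_N^t f-\mathcal{K}^t f\|_\rho=0$ for every $f\in\mathcal{F}$ and $T>0$; passing from the core to all of $\mathcal{F}$ uses density of $\mathcal{D}(\mathcal{A})$ (cf.\ the remark after Assumption~\ref{ass:str_cont_semigroup_K}).

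The step I expect to be the main obstacle is the uniform stability bound, and within it the hypothesis $\operatorname{Re}\langle\mathcal{A}f,f\rangle_\rho\le\omega\|f\|_\rho^2$: this is \emph{not} automatic from Assumption~\ref{ass:str_cont_semigroup_K} when $\rho$ is not invariant, so it must either be added as a structural assumption or be deduced from the regularity of $\mathbf{b},\bm{\sigma}$ by integrating the It\^o form \eqref{eq: generator_ito_formula} by parts against $\rho$. Once such an $\omega$ is fixed, the rest is routine: the finite-dimensional dissipativity computation is elementary, and the genuinely infinite-dimensional difficulties (unboundedness of $\mathcal{A}$, the limit $N\to\infty$) have already been absorbed into Theorem~\ref{thm: generator_strong_convergence} and Assumption~\ref{ass: core}.
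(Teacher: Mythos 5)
Your proposal follows the same top-level route as the paper: the convergence part is delegated to the Trotter--Kato approximation theorem, with Assumption~\ref{ass: core} (and Theorem~\ref{thm: generator_strong_convergence}) supplying the core convergence of the generators. Where you genuinely diverge is on the stability bound. The paper's proof simply invokes \cite[Theorem~2.2]{pazy2012semigroups}, which yields an exponential bound for each individual strongly continuous semigroup --- i.e.\ constants that a priori depend on $N$ --- and leaves the uniformity in $N$ (which Trotter--Kato actually requires) implicit. Your dissipativity argument is a concrete way to close that gap: the identity $\operatorname{Re}\langle\mathcal{A}_N f,f\rangle_\rho=\operatorname{Re}\langle\mathcal{A}f,f\rangle_\rho$ for $f\in\mathcal{F}_N$ (valid because $\mathcal{P}_N$ is a self-adjoint projection and $\mathcal{F}_N\subset\mathcal{D}(\mathcal{A})$), the Gronwall estimate on the invariant subspace $\mathcal{F}_N$, and the observation that $e^{t\mathcal{A}_N}$ acts as the identity on $\mathcal{F}_N^\perp$ together give $\|e^{t\mathcal{A}_N}\|\le e^{\omega^{+}t}$ with constants independent of $N$. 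The price, which you correctly flag yourself, is the quasi-dissipativity hypothesis $\operatorname{Re}\langle\mathcal{A}f,f\rangle_\rho\le\omega\|f\|_\rho^2$: this is not among the paper's stated assumptions and does not follow from Assumption~\ref{ass:str_cont_semigroup_K} when $\rho$ is not invariant, so it would have to be added as a structural hypothesis or derived from the coefficients $\mathbf{b},\bm{\sigma}$. In short, your write-up is more careful than the paper's on precisely the point where the paper's one-line citation is weakest (uniform stability), at the cost of one extra, but reasonable, hypothesis; the remainder of your argument (boundedness of $\mathcal{A}_N$, density of the core, passage from the core to all of $\mathcal{F}$) matches the paper's intent.
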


\begin{proof}[\textbf{Proof of Theorem \ref{thm: result_of_TK}}]
    Since $\mathcal{K}_N^t$ are generated by a finite dimensional operator $\mathcal{A}_N$ for each $N\in\mathbb{N}$, they are strongly continuous; and also $\mathcal{K}^t$ is strongly continuous by Assumption \ref{ass:str_cont_semigroup_K}. Thus, the exponential boundedness in the first part is proved according to \cite[Theorem 2.2]{pazy2012semigroups}. The second part of the theorem is an immediate consequence of Trotter--Kato Approximation Theorem \cite{engel1999one}. 
\end{proof}

\begin{remark}
    The Trotter--Kato Approximation Theorem \cite{engel1999one} ensures uniform semigroup convergence on compact time intervals once strong resolvent convergence of the approximate generators is established, i.e.,
    $(\lambda-\mathcal{A}_N)^{-1}f \to (\lambda-\mathcal{A})^{-1}f
    \quad \text{for all } f\in\mathcal{F}, \ \lambda>\omega,$    
    together with other properties and assumptions. From our generator convergence result in the first part of the theorem, the required resolvent convergence follows directly, hence the semigroup convergence in the second part. See \cite[Section~4]{engel1999one} for details.
\end{remark}

\begin{remark}
    In our setting, $\mathcal{K}^{\Delta t}$ approximates the one-step Koopman operator $e^{\Delta t \mathcal{A}}$. Iterating this operator computes a discrete-time semigroup $(\mathcal{K}^{\Delta t})^n$, which converges to the continuous semigroup $e^{tA}$ as $\Delta t \to 0$. This convergence can be formally justified by the Trotter–Kato theorem under standard assumptions on the generator approximation.
\end{remark}


\section{SDMD with Dictionary Learning (SDMD-DL)}\label{sec:sdmd_dl}

Manual selection of basis functions often requires domain expertise and prior knowledge of the system's dynamics, which can be challenging for complex stochastic systems. Additionally, hand-crafted dictionaries may not optimally capture the underlying state space geometry and lead to suboptimal approximation of the Koopman operator. To address these limitation, SDMD can be integrated with dictionary learning technique \cite{1614066, li2017extended} that automatically extracts optimal basis functions directly from data \cite{li2017extended, liu2024physics, lusch2018deep, 10.1063/5.0157763, xu2025reskoopnetlearningkoopmanrepresentations}. 

\subsection{Methodology and Discussion}
\textbf{Overall Framework and Design}
In SDMD-DL framework, the matrix \( \Psi_X + \Delta t \ \Psi^{'}_X + \bm{o}_{m,N}(\Delta t) \) in Eq.\eqref{eq: min_loss} is replaced by the data matrix \( \Psi_Y \) and have the following minimization problem
\begin{equation}\label{min_loss_nn}
    \min_{\tilde{K}_{N,\Delta t,m}\in\mathbb{R}^{N\times N}} \| \Psi_Y - \Psi_X \tilde{K}_{N,\Delta t,m} \|_F^2,
\end{equation}
where the matrix \( \Psi_Y \) is defined as
\[
    \Psi_Y \coloneqq 
    \begin{bmatrix}
        \psi_1(\mathbf{y}_1) & \cdots & \psi_{N}(\mathbf{y}_1) \\
        \vdots & \ddots & \vdots \\
        \psi_1(\mathbf{y}_m) & \cdots & \psi_{N}(\mathbf{y}_m)
    \end{bmatrix},
\]
and \( \mathbf{y}_i \) represents the evolved state \( \Delta t \) time after $\mathbf{x}_i$ under stochastic dynamics for each $1\leq i \leq m$, in other words, we can say that the state \( \mathbf{y}_i \) is the realization of this stochastic evolution as given in Eq.\eqref{eq: sde} starting from the initial state \( \mathbf{x}_i \) over \( \Delta t \) time. In SDMD-DL, a neural network is integrated to parameterize dictionary functions \( \bm{\Psi}(\mathbf{x}; \theta) \) for data-driven learning. This approach alternates between optimizing the approximated Koopman operator \( \widehat{K}_{N,\Delta t,m}(\theta) \) by Eq.\eqref{eq: sdmd_kpm_operator_2} and updating the neural network parameters \( \theta \) using stochastic gradient descent for Eq.\eqref{min_loss_nn}, ensuring that both the dictionary functions and the operator approximation improve iteratively.

\noindent\textbf{Parameterization}
We now show more details of how to paramterize the basis functions and the training scheme. Denote \( \bm{\Psi}(\mathbf{x}; \theta) = [\psi_1(\mathbf{x}; \theta), \dots, \psi_N(\mathbf{x}; \theta)]^\top \) by a dictionary parameterized by a neural network, where \( \theta \) represents the trainable parameters. Both \( \Psi_X = \Psi_X(\theta) \) and \( \Psi_Y = \Psi_Y(\theta) \) are computed directly from the neural network and the Koopman operator \( \widehat{K}_{N,\Delta t,m}(\theta) \) is then computed by Eq.\eqref{eq: sdmd_kpm_operator_2} as following
\[
    \widehat{K}_{N,\Delta t,m}(\theta) = I + \Delta t \ \widehat{G}(\theta)^{-1}\widehat{H}(\theta).
\]

The training process involves minimizing a loss function that balances the approximation quality of \( \widehat{K}_{N,\Delta t,m} \) with regularization. Here the loss function is defined as:
\[
    J(\theta, \widehat{K}_{N,\Delta t,m}) = \| \Psi_Y(\theta) - \Psi_X(\theta) \widehat{K}_{N,\Delta t,m}(\theta) \|_F^2 + \gamma R(\widehat{K}_{N,\Delta t,m}),
\]
where $\gamma>0$ is some small positive number and \( R(\widehat{K}_{N,\Delta t,m}) \) is the regularization term. In this work, we use Tikhonov regularization \( R(\widehat{K}_{N,\Delta t,m}) = \|\widehat{K}_{N,\Delta t,m}(\theta)\|_F^2 \). We show the pseudocode in Algorithm \ref{algorithm: sdmd_dl}.

\begin{algorithm}[!htb]
\caption{Estimation of stochastic Koopman operator with dictionary learning}\label{algorithm: sdmd_dl}
\begin{algorithmic}[1]
\Require i.i.d. initial data points $\{\mathbf{x}_k\}_{k=1}^m$ and their $n$-step time series trajectories $\{\{\mathbf{y}_k^{(j)}\}_{j=1}^n\}_{k=1}^m$, dictionary size $N$, sampling time step $\Delta t$, regularization parameter $\gamma>0$, learning rate $\eta>0$, number of epochs $T>0$.
\State Estimate SDE coefficients $\mathbf{b}(\cdot)$ and $\bm{\sigma}(\cdot)$ from the time series data (See \ref{appendix: sde_coef_estimation}).
\State Initialize neural network parameters $\theta$.
\State Initialize Koopman operator $\widehat{K}_{N,\Delta t,m}$.
\For{epoch = 1 to $T$}
    \State Compute $\Psi_X(\theta),\Psi_Y(\theta),\Psi^{'}_X(\theta)$ using $\{(\mathbf{x}_k, \mathbf{y}_k^{(j)})\}_{k=1,j=1}^{m,n}$, $\mathbf{b}(\cdot)$, and $\bm{\sigma}(\cdot)$.
    \State Compute Gram matrices:
        \State \qquad $\widehat{G}(\theta) = \frac{1}{m}\Psi_X(\theta)^*\Psi_X(\theta)$,
        \State \qquad $\widehat{H}(\theta) = \frac{1}{m}\Psi_X(\theta)^*\Psi^{'}_X(\theta)$.
    \State Compute loss:
        \State \qquad $J(\theta,\widehat{K}_{N,\Delta t,m}) = \|\Psi_Y(\theta) - \Psi_X(\theta)\widehat{K}_{N,\Delta t,m}\|_F^2 + \gamma\|\widehat{K}_{N,\Delta t,m}\|_F^2$.
    \State Update neural network parameters:
        \State \qquad $\theta \leftarrow \theta - \eta\nabla_\theta J(\theta,\widehat{K}_{N,\Delta t,m})$.
    \State Update approximated Koopman operator:
        \State \qquad $\widehat{K}_{N,\Delta t,m} = I + \Delta t \ \left(\widehat{G}(\theta)+\gamma I\right)^{-1}\widehat{H}(\theta)$.
\EndFor
\Ensure Approximated Koopman operator $\widehat{K}_{N,\Delta t,m}(\theta)$.
\end{algorithmic}
\end{algorithm}


\subsection{Comparison with Other Methods}\label{sec:comparison}

First we compare SDMD-DL with EDMD-DL \cite{li2017extended}, which extends traditional EDMD by replacing manually selected dictionaries with ones learned from data using a simple feedforward neural network. While SDMD-DL shares similarities with EDMD-DL in the loss function computing formula, a key distinction lies in how \( \widehat{K}_{N,\Delta t,m} \) is updated. SDMD-DL employs Eq.\eqref{eq: sdmd_kpm_operator_2}, leveraging the stochastic Taylor expansion and incorporating the sampling time \( \Delta t \). In contrast, EDMD-DL uses \( \widehat{K}_{N,\Delta t,m} = (\Psi_X^* \Psi_X)^{-1}(\Psi_X^* \Psi_Y) \), which is better suited for deterministic systems without consideration of stochasticity.

Next, we can also compare our method to gEDMD with dictionary learning setting, i.e., gEDMD-DL, which is almost same as the EDMD-DL's framework except that the loss function in gEDMD-DL is defined to minimize the linear regression error associated with the generator \( \mathcal{A} \) instead of semigroup $\mathcal{K}^{\Delta t}$ as in EDMD-DL. This contains the computation of \( \Psi_X^* \Psi_X \) and \( \Psi_X^* \Psi^{'}_X \), where \( \Psi^{'}_X \) involves evaluating Jacobian and Hessian matrices as given in Eq.\eqref{data_matrices}. However, calculation of \( \Psi^{'}_X \) can be extremely computationally expensive, especially for large datasets, as they require repeatedly computing higher-order derivatives during the Automatic Differentiation process and cross validation process. In contrast, SDMD-DL defines its loss function directly using \( \Psi_Y \), the parameterized dictionary applied to the evolved data, instead of \( \Psi^{'}_X \). By directly utilizing \( \Psi_Y \), SDMD-DL avoids the heavy evaluation of \( \Psi^{'}_X \), thus significantly reducing computational overhead. Furthermore, the use of \( \Psi_Y \) ensures consistency with the stochastic Koopman operator's evolution, which can enable the approximation of the Koopman operator in stochastic systems more accurately.

The following Table \ref{tab:comparison_test} gives a comparison of each method's distinction in the aspects of both loss function and updating formula:
\begin{table}[!htb]
    \centering
    \renewcommand{\arraystretch}{1.5}
    \begin{tabular}{|>{\centering\arraybackslash}m{2cm}|m{12cm}|}
        \hline
        \centering
        \textbf{EDMD-DL} & \textbf{Loss function:}
        \[ 
            J(\theta, \widehat{K}_{N,\Delta t,m}) = \| \Psi_Y(\theta) - \Psi_X(\theta) \widehat{K}_{N,\Delta t,m}(\theta) \|_F^2 + \gamma R(\widehat{K}_{N,\Delta t,m}) 
        \] 
        \textbf{Updating formula:} \vspace{0.1cm}
        \[ 
            \widehat{K}_{N,\Delta t,m}(\theta) = \Big(\Psi_X(\theta)^* \Psi_X(\theta) + \gamma I\Big)^{-1}\Big(\Psi_X(\theta)^* \Psi_Y(\theta)\Big) 
        \]
        \vspace{-0.2cm}
        \\
        \hline
        \centering
        \textbf{gEDMD-DL} & \textbf{Loss function:}
        \[ 
            J(\theta, \widehat{K}_{N,\Delta t,m}) = \| \Psi^{'}_X(\theta) - \Psi_X(\theta) \widehat{A}_{N,m}(\theta) \|_F^2 + \gamma R(\widehat{A}_{N,m}) 
        \]
        \textbf{Updating formula:} 
        \[ 
            \widehat{A}_{N,m}(\theta) = \Big(\Psi_X(\theta)^* \Psi_X(\theta) + \gamma I\Big)^{-1}\Big(\Psi_X(\theta)^* \Psi^{'}_X(\theta)\Big) 
        \]
        \vspace{-0.2cm}
        \\
        \hline
        \centering
        \textbf{SDMD-DL} & \textbf{Loss function:} 
        \[ 
            J(\theta, \widehat{K}_{N,\Delta t,m}) = \| \Psi_Y(\theta) - \Psi_X(\theta) \widehat{K}_{N,\Delta t,m}(\theta) \|_F^2 + \gamma R(\widehat{K}_{N,\Delta t,m}) 
        \]
        \textbf{Updating formula:} 
        \[ 
            \widehat{K}_{N,\Delta t,m}(\theta) = I + \Delta t \  \Big(\Psi_X(\theta)^* \Psi_X(\theta) + \gamma I\Big)^{-1}\Big(\Psi_X(\theta)^* \Psi^{'}_X(\theta)\Big)
        \] 
        \vspace{-0.2cm}
        \\
        \hline
    \end{tabular}
    \vspace{0.5cm}
    \caption{Comparison test between EDMD-DL, gEDMD-DL and SDMD-DL.}
    \label{tab:comparison_test}
\end{table}


\section{Application}
This section evaluates the proposed Stochastic Dynamic Mode Decomposition (SDMD) framework through four representative experiments, each selected to highlight unique features of stochastic dynamical systems and demonstrate SDMD’s effectiveness. We first provide a brief overview of these examples and explain our objectives and accomplishments in applying SDMD. These experiments showcase the method’s ability to approximate the Koopman semigroup and their spectral properties which offers insights into system behaviors such as oscillatory dynamics, decay rates, and metastable transitions, while also comparing SDMD’s performance to established methods like EDMD and gEDMD.

The 2D Stuart-Landau system \cite{tantet2020ruellepollicottresonancesstochasticsystems} is a well-known model in nonlinear dynamics, often used to study oscillatory behaviors perturbed by stochastic noise. It behaves as a limit cycle under some conditions whose radius and phase evolve under both deterministic and random influences, which makes it an excellent testbed for analyzing spectral properties in the presence of noise. In this experiment, we apply SDMD to capture the eigenvalues of the Koopman semigroup, aiming to accurately reflect the system’s oscillatory patterns and phase diffusion. Our results demonstrate SDMD’s superior ability to handle stochastic perturbation compared to traditional method like EDMD.

The 1D Ornstein-Uhlenbeck (OU) process \cite{gillespie1996exact} is a time-reversible system characterized by mean reversion driven by a linear drift and Gaussian noise, with closed-form solution for its spectral properties. Here, we use SDMD to estimate the leading eigenpairs of the Koopman semigroup and achieve high precision that aligns with theoretical expectation.

The 2D Triple-Well system \cite{schutte2013metastability} models a metastable stochastic process with a potential landscape and is commonly studied in chemical physics and molecular dynamics. Its dynamics exhibit slow transitions between basins alongside rapid fluctuation within them, which arised a challenge for capturing long-term behaviors. With SDMD, we identify the slow timescales of basin transitions through the Koopman semigroup’s eigenvalues and eigenfunctions, which successfully reveals the system’s metastable structure. This experiment shows SDMD’s strength in analyzing complex, time-reversible metastable systems and its advantage over methods like gEDMD in computational efficiency and accuracy.

The final example involves a minimal two-dimensional neural-mass model \cite{montbrio2015macroscopic} that exhibits multiscale dynamics, including a slow-modulating latent input, an intermediate transient decay, and fast oscillations. Using only the observable state variables, SDMD accurately recovers eigenfunctions corresponding to these latent timescales, even in the presence of stochastic perturbation. In particular, SDMD reveals one eigenfunction that reflects the slow switching of input-modulated regimes, and another that captures transient dynamics during transitions. In contrast, EDMD with dictionary learning fails to isolate these features, which produces eigenfunctions corrupted by fast oscillations and noise. This example highlights SDMD’s capability to infer hidden multiscale structures from limited, noisy measurements, which can demonstrate its robustness and interpretability in realistic settings.

The four experiments use the Euler-Maruyama (EM) method for numerical integration. In the two examples of OU-process and triple-well system, we observe that the approximated eigenvalues are purely real, consistent with their time-reversible property. We also note that the drift and diffusion coefficients used in the 2D Stuart-Landau system are analytical values in polar coordinates, whereas those in the 1D OU process and 2D Triple-Well system are estimated from collected data using a simple neural network (NN) instead of manually selecting basis functions. The error bound analysis of this coefficient estimation method is provided in \cite{gu2023stationary}. The following subsections detail the setups and results. All experiments are available on our \href{https://github.com/TalkingDoll/SDMD/tree/main}{GitHub} repository.

\subsection{2D Stuart-Landau Equation}
The 2D stochastic Stuart-Landau (SL) equation \cite{tantet2020ruellepollicottresonancesstochasticsystems} serves as a canonical example to study nonlinear oscillatory dynamics under stochastic perturbation. It is frequently used to validate numerical methods for estimating the stochastic Koopman operator since it has analytical Koopman eigenpairs expression. The equation in standard Cartesian coordinates is the following
\[
    \begin{aligned}
        dx &= \left[ \left( \delta - \kappa(x^2 + y^2) \right) x - \left( \gamma - \beta(x^2 + y^2) \right) y \right] \, dt + \epsilon \, dW_x, \\
        dy &= \left[ \left( \gamma - \beta(x^2 + y^2) \right) x + \left( \delta - \kappa(x^2 + y^2) \right) y \right] \, dt + \epsilon \, dW_y,
    \end{aligned}
\]
where \( R \coloneqq \sqrt{\delta/\kappa} \) is the radius of the limit cycle, $\gamma>0$ is the rotation frequency parameter that controls the linear rotational motion of the system and $\beta>0$ is the nonlinear frequency parameter that controls amplitude-related frequency changes. The noise terms \( dW_x \) and \( dW_y \) are independent Wiener processes with intensity \( \epsilon \). Note that, a positive parameter \( \delta > 0 \) indicates that the system exhibit a stable limit cycle behavior. When $\delta < 0$, the system experiences negative radial growth, with trajectories spiraling inward toward the origin and the origin becomes a stable focus. 

To simplify the analysis, the system is often transformed into polar coordinates
\[
\begin{aligned}
    dr &= \left( \delta r - \kappa r^3 + \frac{\epsilon^2}{2r} \right) \, dt + \epsilon \, dW_r, \\
    d\theta &= \left( \gamma - \beta r^2 \right) \, dt + \frac{\epsilon}{r} \, dW_\theta,
\end{aligned}
\]
where \( r \) and \( \theta \) represent the radius and angular position, respectively, and \( dW_r \) and \( dW_\theta \) are derived Wiener processes. The analytical eigenvalues of the stochastic Stuart-Landau system's Kolmogorov operator, i.e., the Koopman generator, are given as:
\begin{equation}\label{eq: sl_evalue}
    \lambda_{ln} =
    \begin{cases}
        -\frac{n^2 \epsilon^2}{2R^2} + i n (1 - \delta) + O(\epsilon^3), & l = 0, \, n \in \mathbb{Z}, \\
        -2l \delta + i n (1 - \delta) + O(\epsilon), & l \in \mathbb{Z}^+ , \, n \in \mathbb{Z}.
    \end{cases}
\end{equation}
    
Here \( l \) and \( n \) are indices that label the radial and angular modes of the system’s dynamics, respectively. We will now talk about the two separate cases of the eigenvalues.

For \( l = 0 \), the focus is on angular dynamics along the limit cycle, with no radial nodes in the eigenfunctions. Here, \( n \in \mathbb{Z} \) indexes the angular frequency, representing the azimuthal mode number, e.g., \( e^{i n \theta} \), where \(\theta\) is the phase angle. The real part \(-\frac{n^2 \epsilon^2}{2R^2}\) of the eigenvalues describes the phase diffusion rate induced by noise (controlled by \(\epsilon\)). This negative term, proportional to \( n^2 \), indicates that higher angular modes (larger \( |n| \)) decay faster due to stochastic dynamics. The imaginary part \( i n (1 - \delta) \) corresponds to the angular oscillation frequency around the limit cycle, where \( 1 - \delta \) reflects the deterministic rotational frequency controlled by the damping parameter \(\delta\). For \( n = 0 \), there’s no oscillation, only slow noise-driven decay; for \( n = \pm 1 \), it’s the fundamental frequency, and higher \( |n| \) gives faster oscillations, which are multiples (harmonics) of the basic rotation frequency $1 - \delta$.

For \( l \in \mathbb{Z}^+ \), it represents the radial mode number, often associated with the degree of Hermite polynomials in the eigenfunction expression (See Appendix~\ref{appendix:2d_sl_eqn}). Each \( l \) corresponds to a distinct radial mode with \( l \) nodes, describing perturbations away from the limit cycle radius. The real part \(-2l \delta\) reflects the decay rate of radial perturbations. Assuming \(\delta > 0\) (i.e., stable limit cycle case), this term \(-2l \delta\) is negative, and the decay rate increases linearly with \( l \), indicating that higher radial modes relax more quickly back to the limit cycle due to deterministic damping. The imaginary part \( i n (1 - \delta) \), which is identical to the \( l = 0 \) case, captures the oscillatory behavior tied to angular harmonics.

\noindent\textbf{Experiment Design and Result:} In our experiments, we aim to evaluate the accuracy of SDMD and EDMD in estimating eigenvalues of the Koopman generator of the stochastic Stuart-Landau system. The Fourier basis is selected as it aligns well with the periodic nature of the system. We specifically compare eigenvalues corresponding to the $l=0$ case in Eq.~\eqref{eq: sl_evalue} from the system with parameter settings: \( \delta = 0.25 \), \( \kappa = 1 \), \( \epsilon = 0.05 \), \( \gamma = 1 \), and \( \beta = 1 \). The system's state space is discretized over 20 points in both \( r \) and \( \theta \) which forms a uniform grid. The radii are uniformly sampled in the range \( r \in [0.4, 0.8] \), and the angles are uniformly sampled from \( \theta \in [-\pi, \pi] \). These points serve as initial conditions for the simulations. For the numerical integration, we let the internal integration step size of \( h = 1 \times 10^{-5} \) and \( n_\text{steps} = 10000 \) steps; so that the data is collected with $\Delta t = 0.1$. The true eigenvalues of the Koopman generator are computed for the case of \( l = 0 \). For this mode, we focus on eigenvalues corresponding to azimuthal harmonics, represented by \( n \in \{-10, -9, \dots, 10\} \) excluding \( n = 0 \), as this eigenvalue corresponds to a trivial mode. These analytical eigenvalues serve as the benchmark to evaluate the accuracy of SDMD and EDMD. In Figure~\ref{fig: 2d_sl_eqn_eval_comparison}, we show comparison of eigenvalues obtained from both methods. More discussion and comparison tests on eigenfunctions will be left in the Appendix \ref{appendix:2d_sl_eqn}.
\begin{figure}[htbp]
    \centering
    \begin{subfigure}[b]{0.48\textwidth}
        \includegraphics[width=\textwidth]{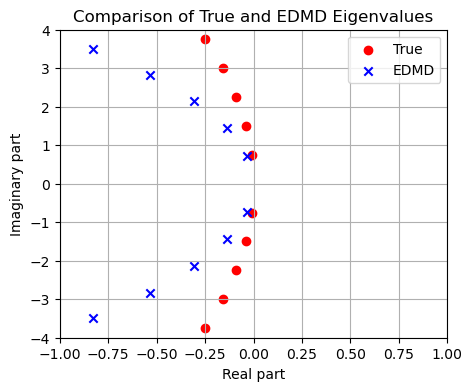}
        \caption{Eigenvalues by EDMD with Fourier basis}
        \label{fig: 2d_sl_eqn_eval_edmd}
    \end{subfigure}
    \hfill
    \begin{subfigure}[b]{0.48\textwidth}
        \includegraphics[width=\textwidth]{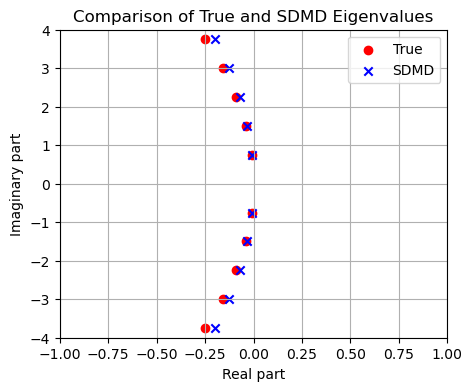}
        \caption{Eigenvalues by SDMD with Fourier basis}
        \label{fig: 2d_sl_eqn_eval_sdmd}
    \end{subfigure}
    \caption{Comparison of eigenvalues of Koopman generator estimated by EDMD and SDMD of the stochastic Stuart-Landau equation using Fourier basis. Figure \eqref{fig: 2d_sl_eqn_eval_edmd} shows the eigenvalues obtained from EDMD, while Figure \eqref{fig: 2d_sl_eqn_eval_sdmd} shows those obtained from SDMD.}
    \label{fig: 2d_sl_eqn_eval_comparison}
\end{figure}

\subsection{1D Ornstein–Uhlenbeck Process}
The Ornstein–Uhlenbeck (OU) process is one of the few stochastic processes for which closed-form expressions can be derived for its transition probability density and the spectral properties of its Koopman generator. The one-dimensional OU process is described by the following SDE:
\begin{equation*}
    dX_t = \theta(\mu_0 - X_t)dt + \sigma dW_t,
\end{equation*}
where $\theta > 0$ is the mean reversion rate, $\mu_0$ is the long-term mean, $\sigma > 0$ is the volatility parameter, and $W_t$ is a standard Wiener process. The generator of this system acting on twice differentiable functions $f$ is \cite{oksendal2010stochastic,pavliotis2016stochastic}:
\begin{equation*}
    (\mathcal{A}f)(x) = \theta(\mu_0 - x)f'(x) + \frac{\sigma^2}{2}f''(x).
\end{equation*}

The eigenvalues and eigenfunctions of this generator have explicit forms, that is, for any non-negative integer $n$, the $n$-th eigenpair is:
\begin{equation}\label{ou_analytical_eigenpair}
    \lambda_n = -n\theta, \quad \phi_n(x) = H_n\left(\frac{x-\mu_0}{\sqrt{\sigma^2/(2\theta)}}\right),
\end{equation}
where $H_n(x)$ is the $n$-th order Hermite polynomial. The transition density solves the Fokker-Planck equation and has a Gaussian stationary distribution $\rho_\infty(x)$ with mean $\mu_0$ and variance $\sigma^2/(2\theta)$, specifically,
\begin{equation*}
    \rho_\infty(x) = \left(2\pi\sigma^2/(2\theta)\right)^{-1/2} \exp\left(-(x-\mu_0)^2/(\sigma^2/\theta)\right).
\end{equation*}
These analytical solutions provide rigorous benchmarks for testing numerical approximation methods like SDMD and gEDMD with neural network settings.

\noindent\textbf{Experiment Design:} 
The parameters are set as \( \theta = 1 \), \( \mu_0 = 0 \), and \( \sigma = 0.1 \). The EM method approximates the solution using discrete time steps \( h = 10^{-4} \), with the update rule:  
\[
    X_{t+h} = X_t + \theta (\mu_0 - X_t) h + \sigma \sqrt{h} \, \xi_t,
\]  
where \( \xi_t \sim \mathcal{N}(0, 1) \). In our setup, \( m = 10 \) initial points are chosen uniformly from the domain \([-2, 2]\), and for each initial point, the process is simulated over \( n_{\text{eval}} = 200 \) evaluations. We select sampling time interval $\Delta = 0.1$. These simulations generate the time series data required for learning the Koopman generator. To approximate the Koopman generator, we apply SDMD and gEDMD methods with a simple NN with dictionary size $N=20$. 

\noindent\textbf{Experiment Result:} 
Figure~\ref{fig: ou_process_eval_comparison}, we show the comparison of eigenpairs of Koopman generator $\mathcal{A}$ obtained from SDMD-DL, gEDMD-DL and EDMD-DL, respectively. Notice that our SDMD-DL method computes the approximated eigenvalues of the Koopman semigroup \( \mathcal{K}^{\Delta t} \), not of the Koopman generator $\mathcal{A}$, so the eigenvalues plotted are by Remark~\ref{rmk:generalized_eigen}. In Table~\ref{tab:sdmd_eigenvalues_ou}, we show the leading four approximated eigenvalues of the Koopman generator, which is first obtained by SDMD-DL and then computed by Eq.~\eqref{eq: evalue_generator_semigroup}. The Table \ref{tab:sdmd_eigenvalues_ou} exhibits two different test results and shows that SDMD-DL can successfully approximate the leading four eigenvalues of the generator since they are close to $0, -1, -2, -3$, as discussed in Eq.~\eqref{ou_analytical_eigenpair}. However, for eigenvalues corresponding to faster-decaying modes, the accuracy also diminishes. This limitation is likely due to insufficient data and an inadequate number of basis functions, constrained by the local computing resource. In Figure \ref{fig:ou_process_efunc_comparison}, we show that the eigenfunctions computed by SDMD-DL exhibit a polynomial structure in correct order, which also highlights the method's consistency with theoretical expectation as in Eq.~\eqref{eq: evalue_generator_semigroup}. To provide a clearer, baseline comparison of the core algorithms, we conducted another set of experiments under more idealized conditions. In this case, we used a fixed dictionary of monomial basis functions up to degree 5 and the analytical values for the SDE coefficients. The results are shown in Figure~\ref{fig: ou_process_monomial_comparison}. As expected, with a well-suited basis and perfect model knowledge, both SDMD and gEDMD achieve nearly perfect results, accurately capturing the theoretical spectrum. In contrast, standard EDMD still performs less accurate, as it does not correctly account for the stochastic term in its formulation.aware methods (SDMD and gEDMD) for stochastic systems.

\begin{figure*}[!htb]
    \centering    
    \begin{subfigure}[b]{0.33\linewidth}
        \centering
        \includegraphics[width=\linewidth]{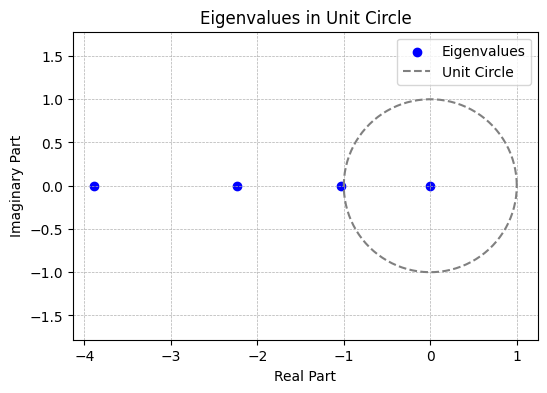}
        \caption{Test 1: SDMD-DL}
        \label{fig:subfig1}
    \end{subfigure}
    \begin{subfigure}[b]{0.33\linewidth}
        \centering
        \includegraphics[width=\linewidth]{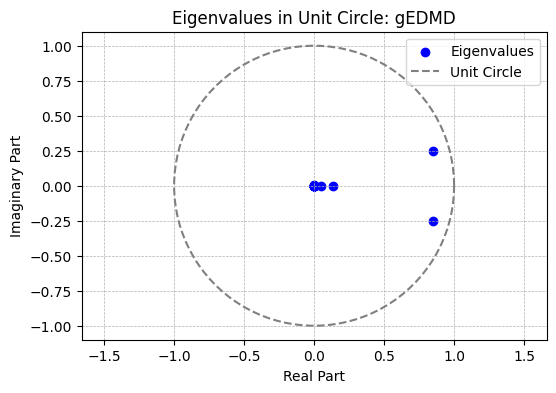}
        \caption{Test 1: gEDMD-DL}
        \label{fig:subfig2}
    \end{subfigure}
    \begin{subfigure}[b]{0.33\linewidth}
        \centering
        \includegraphics[width=\linewidth]{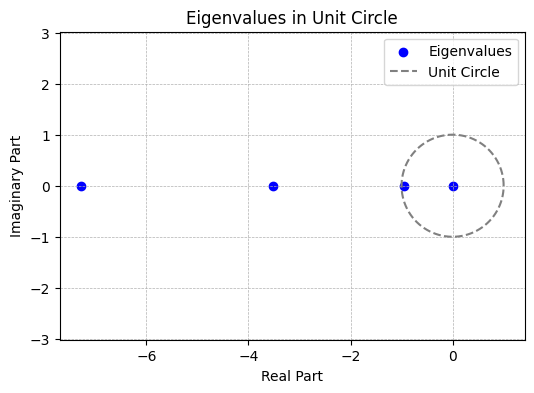}
        \caption{Test 1: EDMD-DL}
        \label{fig:subfig3}
    \end{subfigure}
    \begin{subfigure}[b]{0.33\linewidth}
        \centering
        \includegraphics[width=\linewidth]{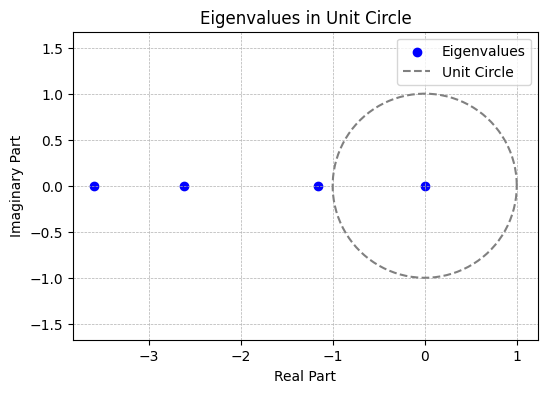}
        \caption{Test 2: SDMD-DL}
        \label{fig:subfig4}
    \end{subfigure}
    \begin{subfigure}[b]{0.33\linewidth}
        \centering
        \includegraphics[width=\linewidth]{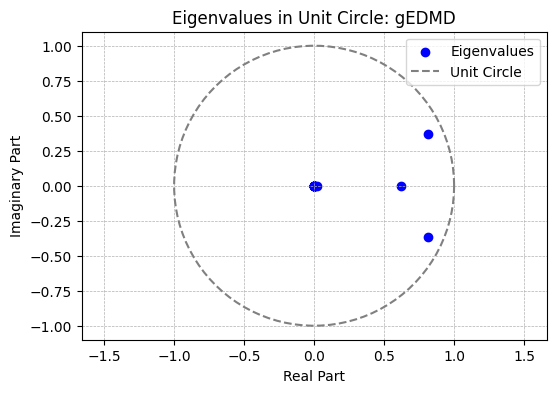}
        \caption{Test 2: gEDMD-DL}
        \label{fig:subfig5}
    \end{subfigure}
    \begin{subfigure}[b]{0.33\linewidth}
        \centering
        \includegraphics[width=\linewidth]{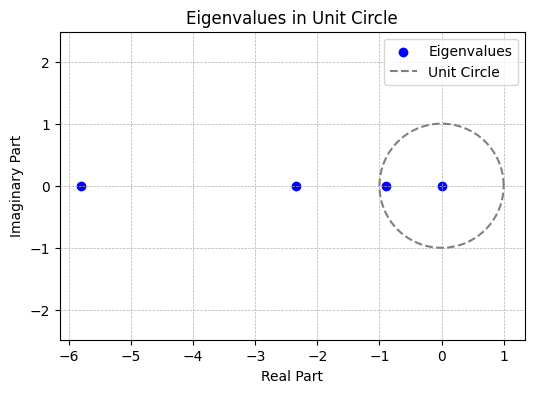}
        \caption{Test 2: EDMD-DL}
        \label{fig:subfig6}
    \end{subfigure}
    \caption{OU process: Two tests on computing eigenvalues of Koopman generator $\mathcal{A}$ obtained from SDMD-DL, gEDMD-DL and EDMD-DL.}
    \label{fig: ou_process_eval_comparison}
\end{figure*}

\begin{figure*}[!htb]
    \centering
    \begin{subfigure}[b]{0.33\linewidth}
        \centering
        \includegraphics[width=\linewidth]{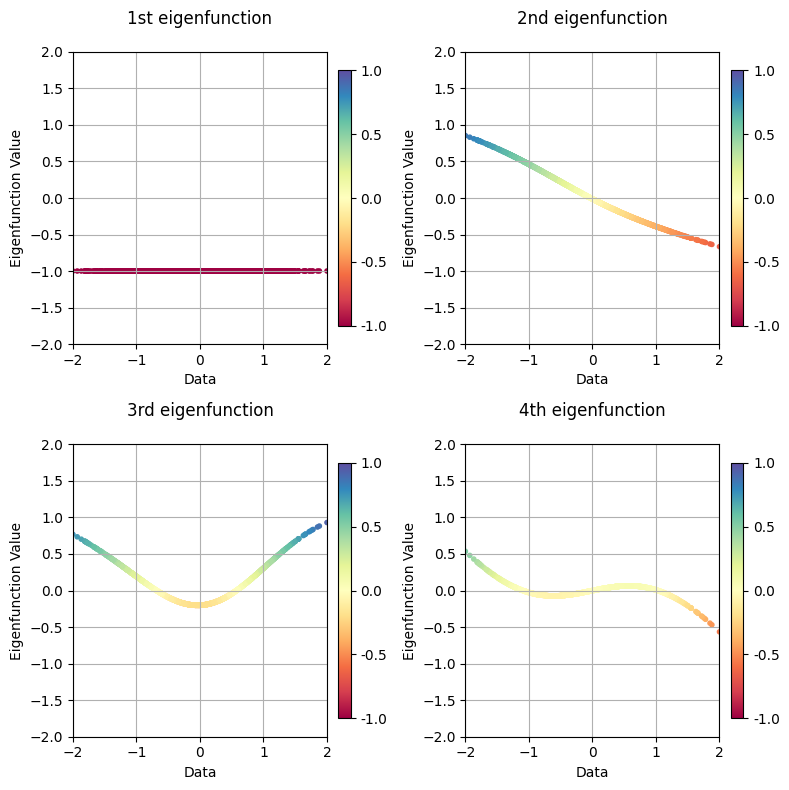}
        \caption{Test 1: SDMD-DL}
        \label{fig:subfig1}
    \end{subfigure}
    \begin{subfigure}[b]{0.33\linewidth}
        \centering
        \includegraphics[width=\linewidth]{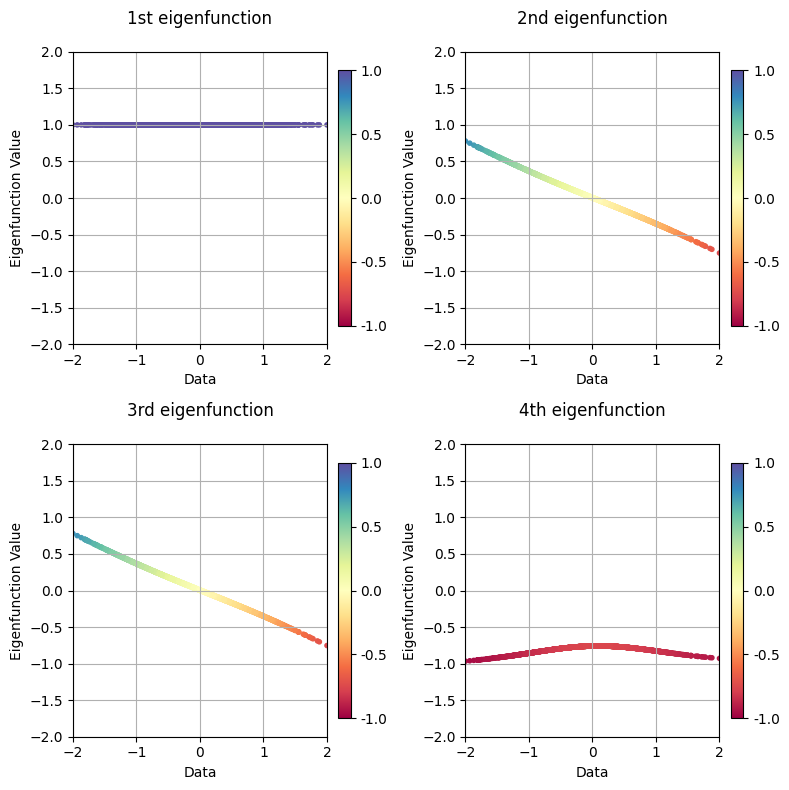}
        \caption{Test 1: gEDMD-DL}
        \label{fig:subfig2}
    \end{subfigure}
    \begin{subfigure}[b]{0.33\linewidth}
        \centering
        \includegraphics[width=\linewidth]{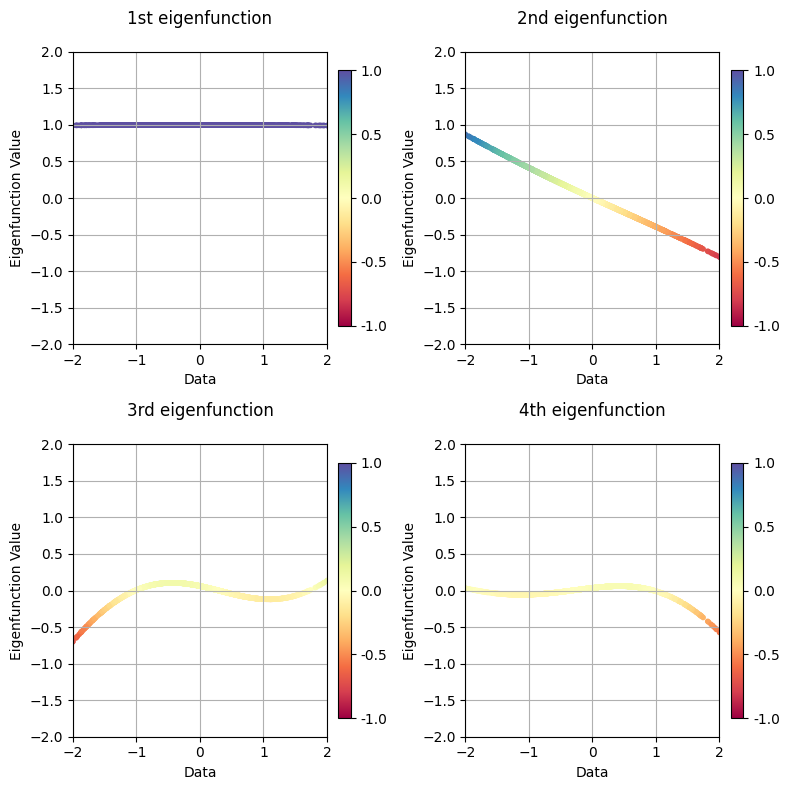}
        \caption{Test 1: EDMD-DL}
        \label{fig:subfig3}
    \end{subfigure}
    \begin{subfigure}[b]{0.33\linewidth}
        \centering
        \includegraphics[width=\linewidth]{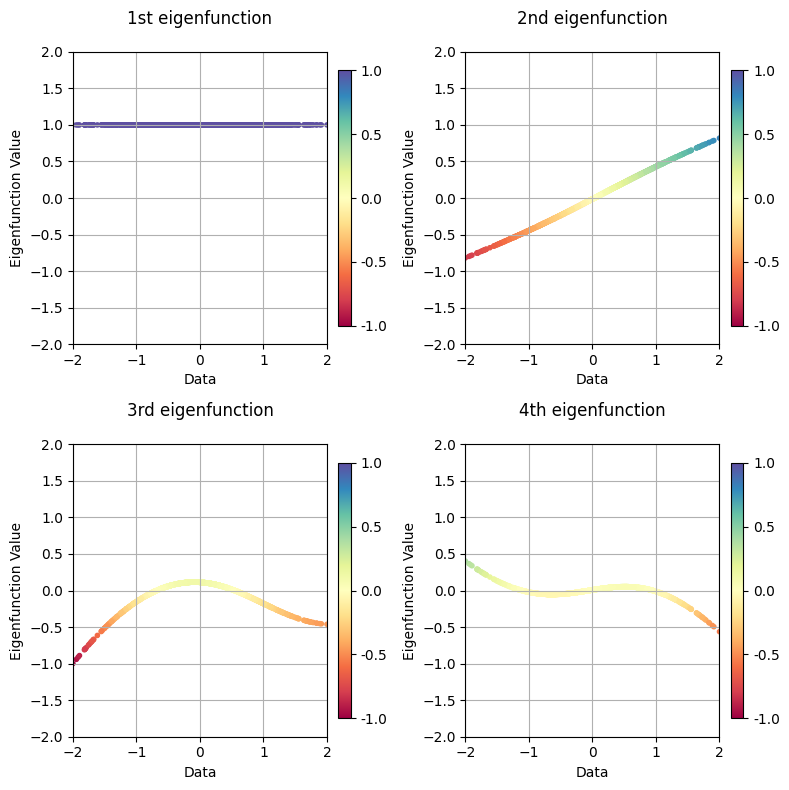}
        \caption{Test 2: SDMD-DL}
        \label{fig:subfig4}
    \end{subfigure}
    \begin{subfigure}[b]{0.33\linewidth}
        \centering
        \includegraphics[width=\linewidth]{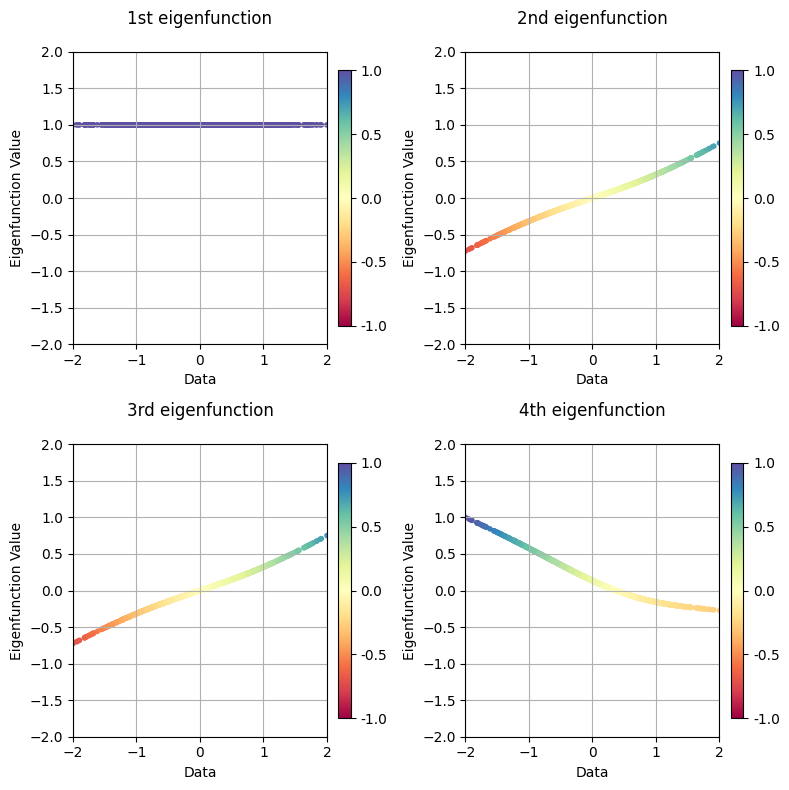}
        \caption{Test 2: gEDMD-DL}
        \label{fig:subfig5}
    \end{subfigure}
    \begin{subfigure}[b]{0.33\linewidth}
        \centering
        \includegraphics[width=\linewidth]{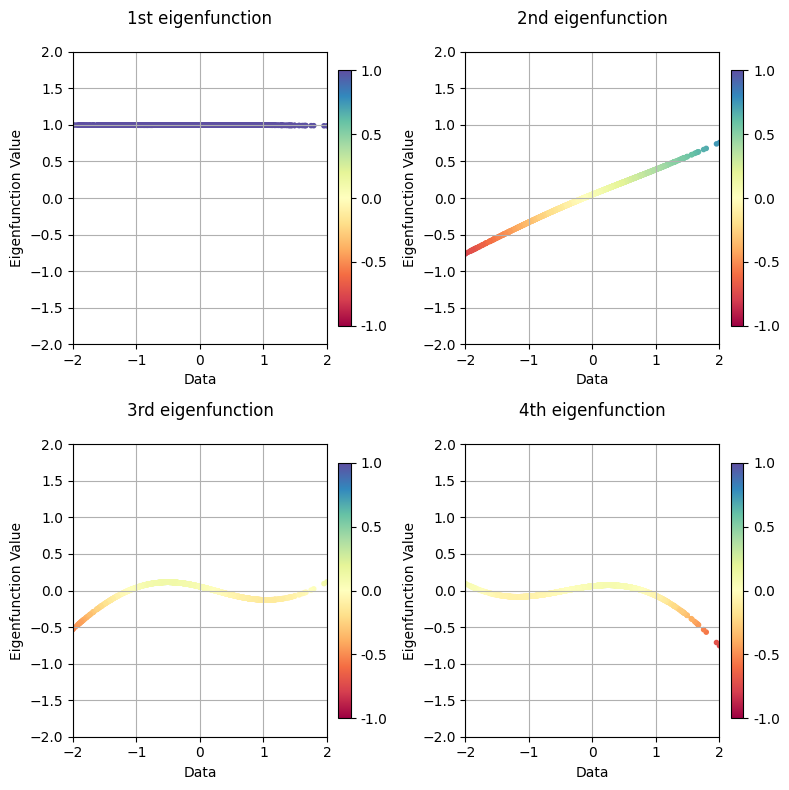}
        \caption{Test 2: EDMD-DL}
        \label{fig:subfig6}
    \end{subfigure}
    \caption{OU process: two tests on computing eigenfunctions obtained from SDMD-DL, gEDMD-DL and EDMD-DL.}
    \label{fig:ou_process_efunc_comparison}
\end{figure*}

\begin{figure*}[!htb]
    \centering    
    \begin{subfigure}[b]{0.33\linewidth}
        \centering
        \includegraphics[width=\linewidth]{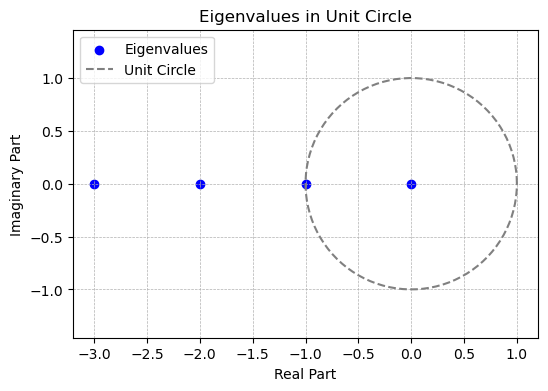}
        \caption{Test: SDMD}
        \label{fig:subfig1}
    \end{subfigure}
    \begin{subfigure}[b]{0.33\linewidth}
        \centering
        \includegraphics[width=\linewidth]{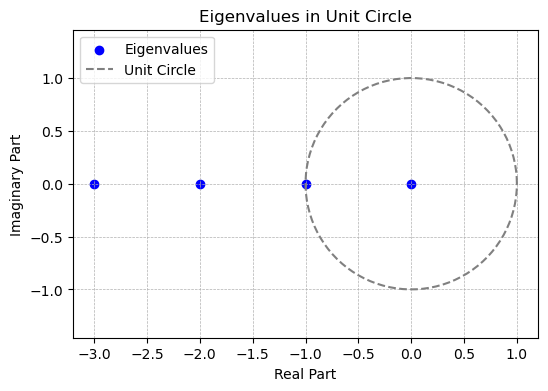}
        \caption{Test: gEDMD}
        \label{fig:subfig2}
    \end{subfigure}
    \begin{subfigure}[b]{0.33\linewidth}
        \centering
        \includegraphics[width=\linewidth]{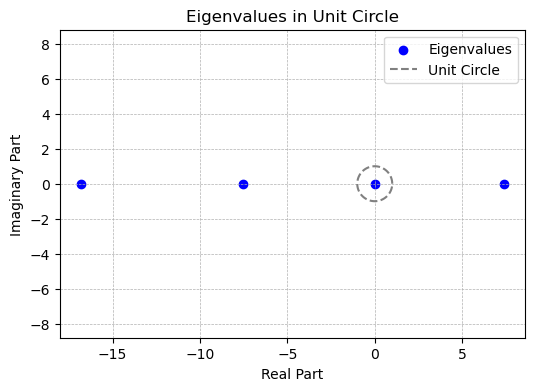}
        \caption{Test: EDMD}
        \label{fig:subfig3}
    \end{subfigure}
    \begin{subfigure}[b]{0.33\linewidth}
        \centering
        \includegraphics[width=\linewidth]{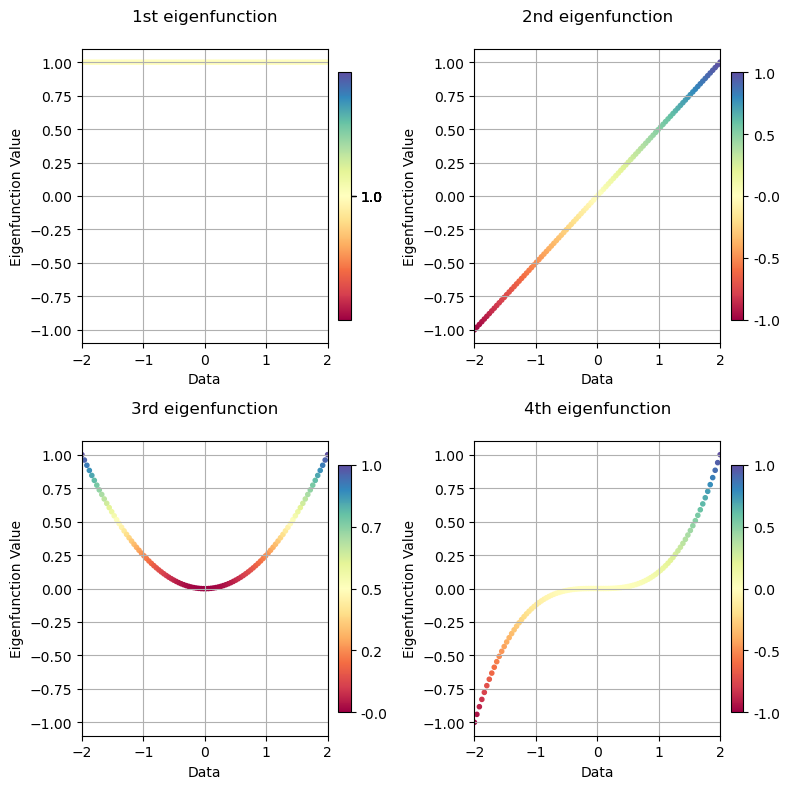}
        \caption{Test: SDMD}
        \label{fig:subfig1}
    \end{subfigure}
    \begin{subfigure}[b]{0.33\linewidth}
        \centering
        \includegraphics[width=\linewidth]{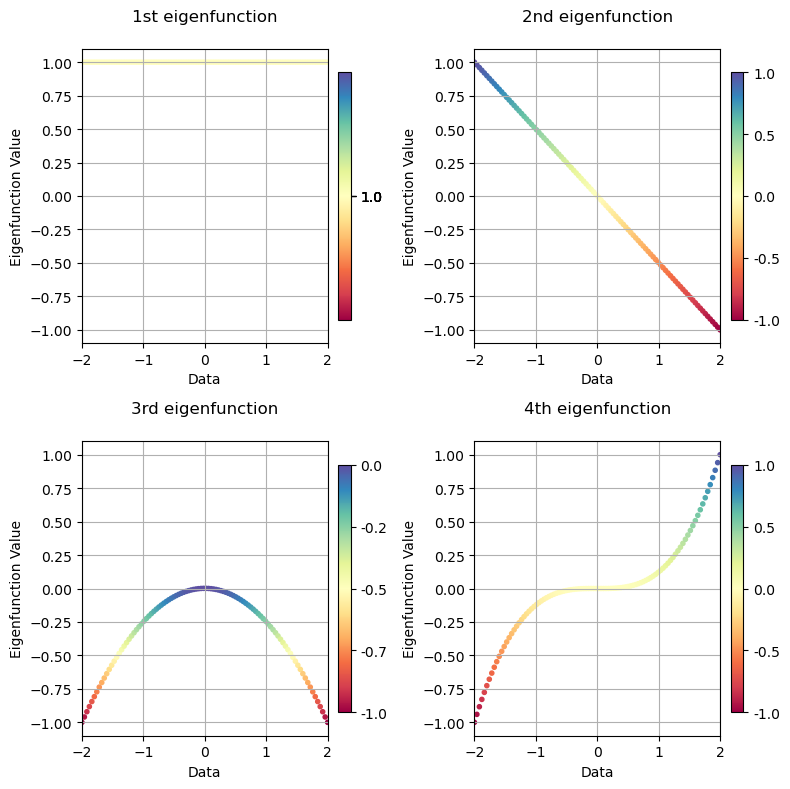}
        \caption{Test: gEDMD}
        \label{fig:subfig2}
    \end{subfigure}
    \begin{subfigure}[b]{0.33\linewidth}
        \centering
        \includegraphics[width=\linewidth]{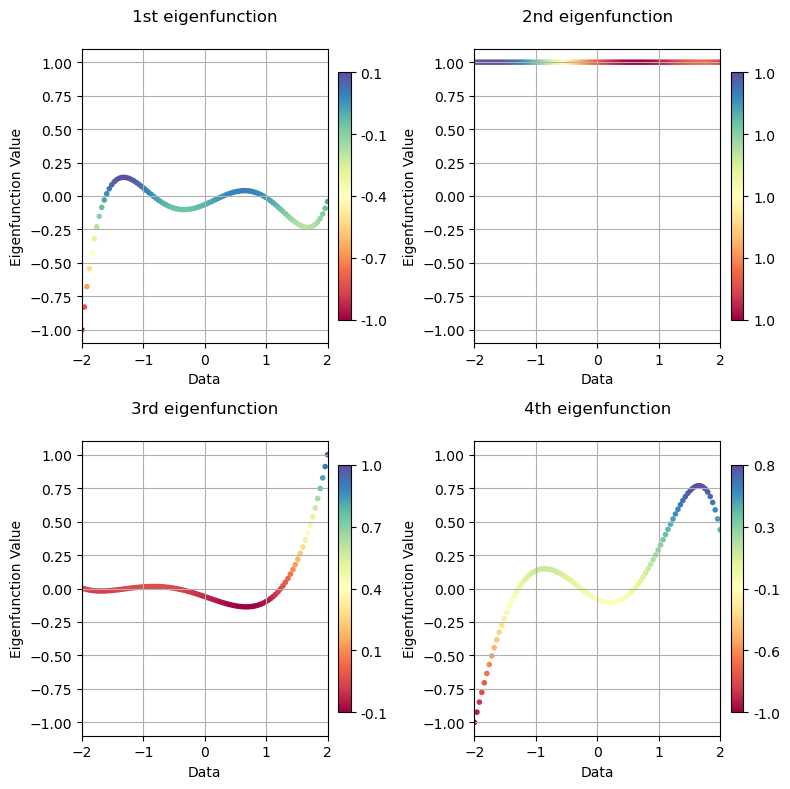}
        \caption{Test: EDMD}
        \label{fig:subfig3}
    \end{subfigure}
    \caption{OU process: test on computing eigenvalues and eigenfunctions of Koopman generator $\mathcal{A}$ obtained from SDMD, gEDMD and EDMD with monomial dictionary.}
    \label{fig: ou_process_monomial_comparison}
\end{figure*}

\begin{table}[ht]
    \centering
    \begin{tabular}{|c|c|c|}
        \hline
        \textbf{Index} & \textbf{Test 1 Eigenvalues} & \textbf{Test 2 Eigenvalues} \\ \hline
        $\lambda_1$ & \(-5.75892737 \times 10^{-5}\) & \(-4.93801632 \times 10^{-5}\) \\ \hline
        $\lambda_2$ & \(-1.03753149\) & \(-1.15851652\) \\ \hline
        $\lambda_3$ & \(-2.23424145\) & \(-2.61595495\) \\ \hline
        $\lambda_4$ & \(-3.88774292\) & \(-3.59187336\) \\ \hline
    \end{tabular}
    \caption{OU process: First four eigenvalues of the Koopman generator estimated using SDMD.}    
    \label{tab:sdmd_eigenvalues_ou}
\end{table}

\subsection{2D Triple-Well System}

The 2D triple-well potential system \cite{schutte2013metastability} provides a rich setting for studying stochastic dynamics due to its intricate energy landscape, which features three distinct basins of attraction separated by potential barriers. This system is inherently metastable, meaning that it tends to remain in one basin for long period, while random fluctuation will also cause slow transitions between basins. Meanwhile, within each basin the system experiences much quicker, small-scale fluctuation. Studying such metastable systems is valuable because it helps us understand long-term behavior, the occurrence of rare events, and the mechanism driving state changes. The dynamics of the system are governed by the following stochastic differential equation
\[
    d\mathbf{X}_t = -\nabla V(\mathbf{X}_t)dt + \bm{\sigma} d\mathbf{W}_t,
\]
where \( \mathbf{X}_t = (x_t, y_t) \) represents the system's state, \( V(x,y) \) is the potential function defining the energy landscape, \( \sigma \) is the diffusion coefficient that characterizes the intensity of the stochastic noise, and \( \mathbf{W}_t \) is a standard Wiener process.

\noindent\textbf{Experiment design:} In this experiment, the potential landscape $V(x, y)$ is defined by 
\[
    V(x, y) \coloneqq 
    3e^{-x^2 - (y - \frac{1}{3})^2}
    - 3e^{-x^2 - (y - \frac{5}{3})^2}
    - 5e^{-(x - 1)^2 - y^2}
    - 5e^{-(x + 1)^2 - y^2}
    + 0.2x^4
    + 0.2(y - \frac{1}{3})^4,
\]
which is depicted in Figure \ref{fig: triple_well_landscape}. For this system, the noise coefficient matrix $\bm{\sigma}$ is defined as a diagonal matrix, where each diagonal element is set to 1.09, that reflects the noise intensity in each spatial direction. The spatial domain is defined by \( x \in [-2, 2] \) and \( y \in [-1, 2] \), with \( m = 35 \) points uniformly sampled along each dimension to create a grid of $m^2$ initial conditions. The trajectories are generated using the Euler-Maruyama (EM) method for numerical integration, with an integration step size of \( h = 1 \times 10^{-3} \) and \( n_\text{steps} = 100 \) steps simulated for each trajectory. In this case, the collected data snapshots have time interval $\Delta t = 0.1$. We apply SDMD and gEDMD methods with dictionary size $N=10$. 
\begin{figure}[!htb]
    \centering    \includegraphics[width=0.35\linewidth]{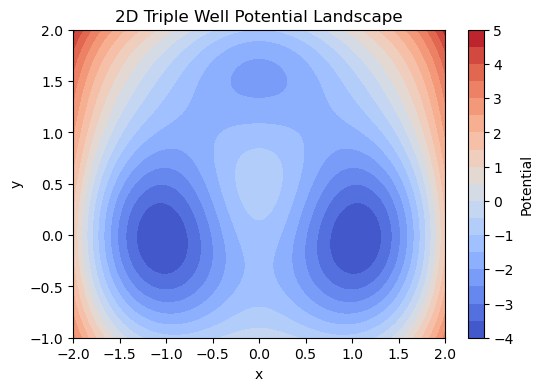}
    \caption{2D Triple Well Potential Landscape}  
    \label{fig: triple_well_landscape}  
\end{figure}

\begin{figure}[!htb]
    \centering    
    \begin{subfigure}[b]{0.48\linewidth}
        \centering
        \includegraphics[width=\linewidth]{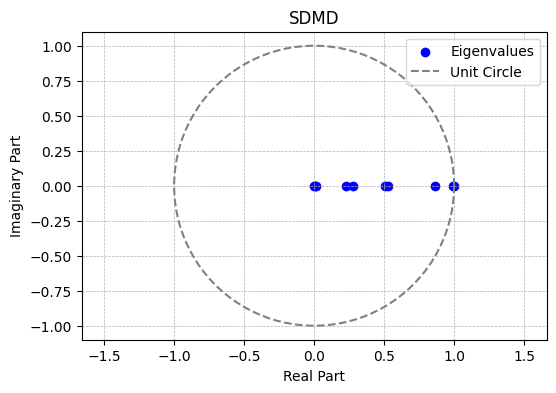}
        \caption{Test 1: SDMD-DL}
    \end{subfigure}
    \begin{subfigure}[b]{0.48\linewidth}
        \centering
        \includegraphics[width=\linewidth]{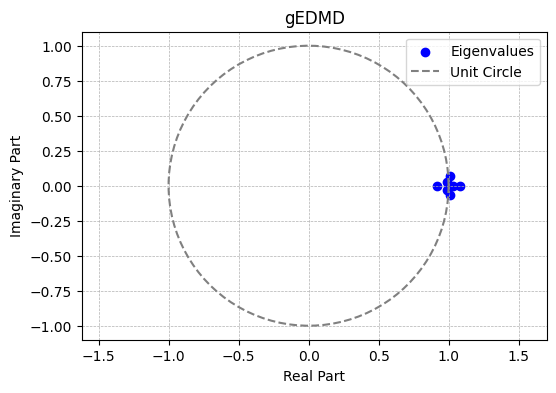}
        \caption{Test 1: gEDMD-DL}
    \end{subfigure}
    \begin{subfigure}[b]{0.48\linewidth}
        \centering
        \includegraphics[width=\linewidth]{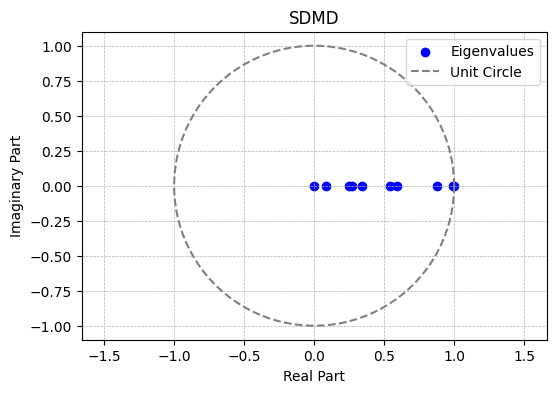}
        \caption{Test 2: SDMD-DL}
    \end{subfigure}
    \begin{subfigure}[b]{0.48\linewidth}
        \centering
        \includegraphics[width=\linewidth]{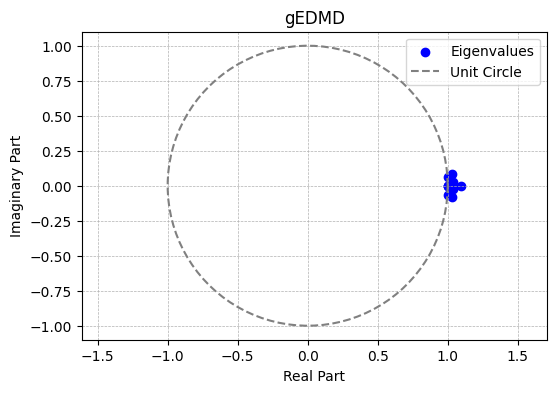}
        \caption{Test 2: gEDMD-DL}
    \end{subfigure}
    \caption{Triple-well: Two tests on computing eigenvalues of Koopman operator $\mathcal{K}^{\Delta t}$ obtained from SDMD-DL and gEDMD-DL.}
    \label{fig:triple_well_eval_comparison}
\end{figure}

\begin{table}[!htb]
    \centering
    \begin{tabular}{|c|c|c|}
        \hline
        \textbf{Index} & \textbf{Test 1 Eigenvalues} & \textbf{Test 2 Eigenvalues} \\ \hline
        $\lambda_1$ & \(0.999999135\) & \(1.00000083\) \\ \hline
        $\lambda_2$ & \(0.993172638\) & \(0.99359029\) \\ \hline
        $\lambda_3$ & \(0.892089359\) & \(0.86772991\) \\ \hline
    \end{tabular}
    \caption{Triple-well system: First three eigenvalues of the Koopman generator estimated using SDMD.}
    \label{tab:sdmd_eigenvalues_triple_well}
\end{table}

\begin{figure}[!htb]
    \centering
    \begin{subfigure}[b]{0.98\linewidth}
        \centering
        \includegraphics[width=\linewidth]{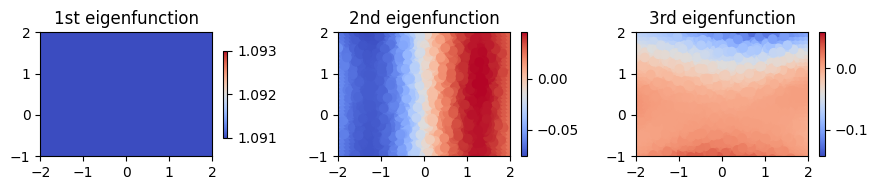}
        \caption{Test 1: SDMD-DL}
    \end{subfigure}
    \begin{subfigure}[b]{0.98\linewidth}
        \centering
        \includegraphics[width=\linewidth]{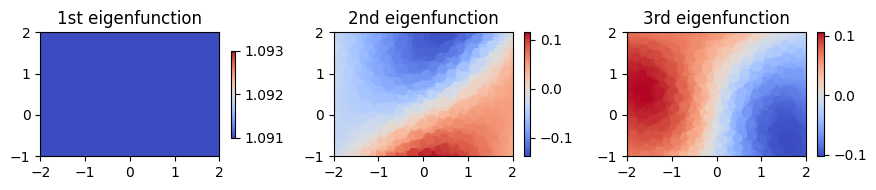}
        \caption{Test 1: gEDMD-DL}
    \end{subfigure}
    \begin{subfigure}[b]{0.98\linewidth}
        \centering
        \includegraphics[width=\linewidth]{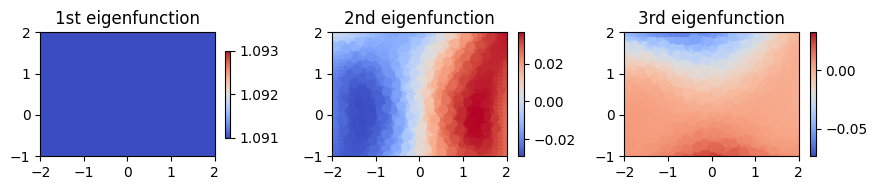}
        \caption{Test 2: SDMD-DL}
    \end{subfigure}
    \begin{subfigure}[b]{0.98\linewidth}
        \centering
        \includegraphics[width=\linewidth]{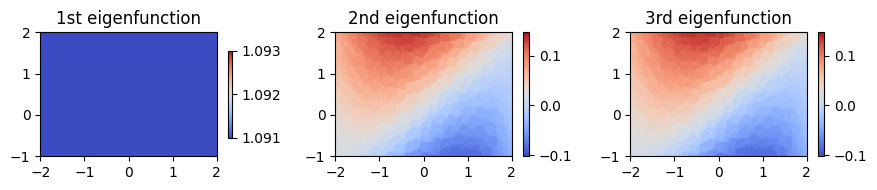}
        \caption{Test 2: gEDMD-DL}
    \end{subfigure}
    \caption{Triple-well: two tests on computing eigenfunctions obtained from SDMD-DL and gEDMD-DL.}
    \label{fig:triple_well_efunc_comparison}
\end{figure}

\noindent\textbf{Experiment Result:} Figure \ref{fig:triple_well_eval_comparison} displays the approximated eigenvalues of the Koopman semigroup $\mathcal{K}^{\Delta t}$ computed by SDMD. These eigenvalues quantify the rates of transitions between basins, with smaller eigenvalues corresponding to slower transitions that reflect the metastable behavior of the system. Here's a more detailed analysis for Figure~\ref{fig:triple_well_eval_comparison} and Figure~\ref{fig:triple_well_efunc_comparison}:
The eigenvalue \( \lambda = 1 \) corresponds to the system's steady state. The associated eigenfunction in this case is simply the constant function, which does not provide additional information about the system. It encodes features linked to the long-term behavior of the system. The eigenvalues close to 1 correspond to the slowest timescales in the system, indicating transitions between metastable states. In this triple well potential system, it represents transitions between the two deeper wells, as their similar depths create slow dynamics governed by the energy barrier between them. The slightly smaller eigenvalue reflects faster dynamics, particularly involving transitions with the shallower well. Since the shallower well is less stable, transitions involving this well occur more rapidly, resulting in a larger separation from 1. The smaller eigenvalues capture other transient behaviors that decay quickly. Table \ref{tab:sdmd_eigenvalues_triple_well} shows the eigenvalues of Koopman generator obtained by Eq.\eqref{eq: evalue_generator_semigroup}. These eigenvalues highlight the system's faster timescales and play a lesser role in describing long-term dynamics. This interpretation aligns with the expected behavior of metastable systems: the spectrum reflects both the number of basins and the hierarchy of transition rates among them. 

\subsection{Recovering Latent Timescales in a Neural Mass Model}

Our previous examples (Stuart-Landau oscillator, Ornstein-Uhlenbeck process, triple-well potential system) demonstrated SDMD’s accuracy in recovering known spectral structures. However, practical scenarios often involve latent dynamics, such as slow modulations or transient intermediate states that are not directly observable in measured variables. To illustrate SDMD's capacity to efficiently uncover these latent timescales with minimal observables and a compact dictionary, we present here a minimal neural-mass model \cite{montbrio2015macroscopic}. This example emphasizes SDMD’s effectiveness in extracting hidden multiscale structures from limited measurements, further underscoring the method's general applicability.

Specifically, we consider the following neural-mass model, a minimal two-dimensional reduction of a large population of quadratic integrate-and-fire neurons derived by Montbrio et. al. \cite{montbrio2015macroscopic}:
\begin{align*}
    dr &= \left( \frac{\Delta}{\pi} + 2rv \right) dt + \sigma_r^2 \, dW_r(t), \\
    dv &= \left( v^2 + Jr + I(t) - \pi^2 r^2 \right) dt + \sigma_v^2 \, dW_v(t),
\end{align*}
where $\sigma_V$ and $\sigma_r$ are the standard deviation of addictive Gaussian noise.
This system intrinsically possesses two distinct dynamical regimes: a low-activity fixed-point regime and a damped-oscillatory (spiral) regime (see Figure~\ref{fig:neural_mass_model}A). The external input $I(t)$ can be set to two different values, each stabilizing one of these regimes (Figure~\ref{fig:neural_mass_model}B, top). Thus, the input acts as a latent slow-modulating state. Switching between input values induces a transient intermediate timescale associated with decay towards the spiral attractor. This provides a clear scenario to test the capability of SDMD to efficiently recover multiple latent timescales, including the slow modulation, transient decay, and fast oscillation from only the two observables $(r,v)$.

\begin{figure}[!htb]
  \centering
  \includegraphics[width=\textwidth]{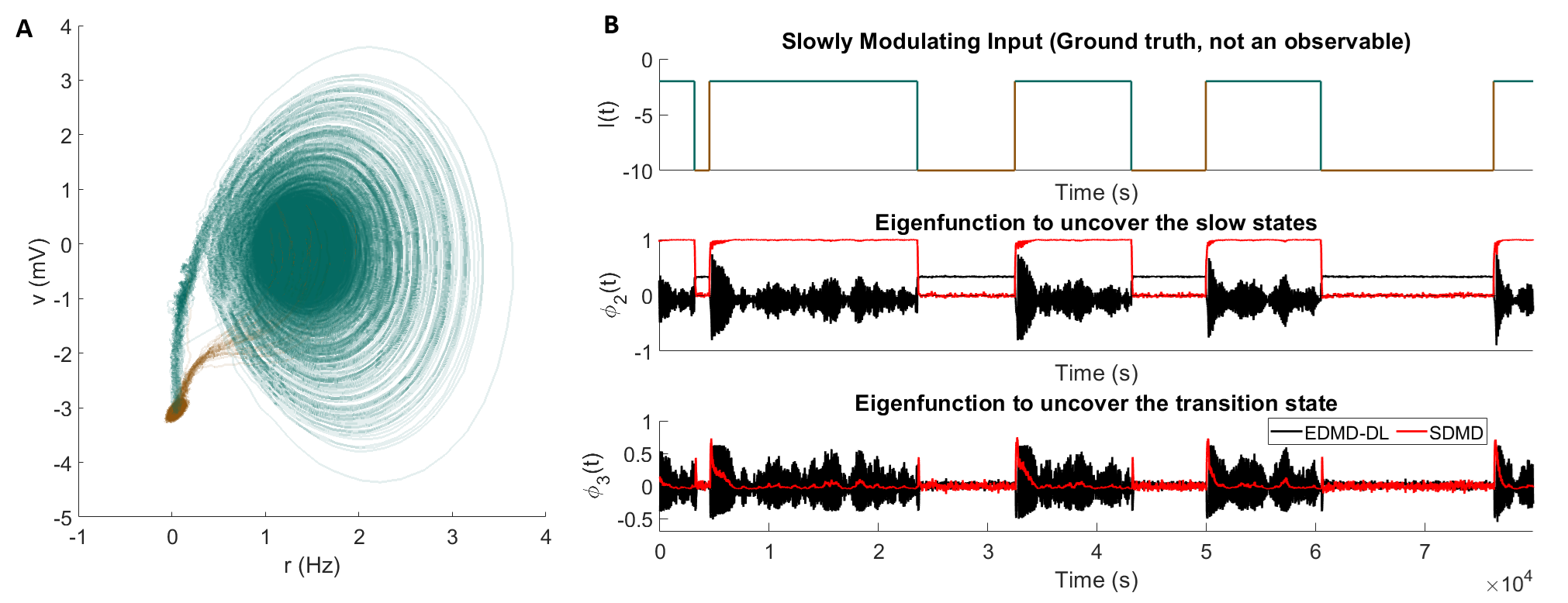}
  \caption{\textbf{SDMD recovers latent states and transient dynamics from a stochastic neural-mass model.}
  (\textbf{A}) Phase-space trajectory of the neural-mass model illustrating two distinct dynamical regimes: a low-activity fixed-point (brown) and a spiral oscillatory state (green), modulated by a slowly alternating latent input (not included as an observable or feature in either method). 
  (\textbf{B, top}) Ground truth of the slowly modulating input $I(t)$, switching between two values ($-2$ and $-10$), where the color code of the two states matches (A). This latent input state was not directly observable during analysis. 
  (\textbf{B, middle and bottom}) Dominant eigenfunctions estimated by SDMD (red) and EDMD-DL (black). SDMD clearly isolates the latent slow input modulation ($\phi_2$) and the transient dynamics during regime switches ($\phi_3$). By contrast, EDMD-DL fails to effectively recover either latent variable, with its eigenfunctions significantly influenced by high-frequency oscillations and stochastic noise.}
  \label{fig:neural_mass_model}
\end{figure}

\noindent\textbf{Experiment design:} We numerically integrated the above equations using the standard Euler method with a fixed time step ($\Delta t = 0.01$ s). The parameters were set as $\Delta = 1$, $J = 15$, $\sigma_V=\sigma_r=0.01$ and the input $I(t)$ alternated between two constant values ($I = -2$ and $I = -10$) driven by a Markov switching process. At each time step, the probability of maintaining the same input state was set to $0.9999$. Additionally, independent Gaussian noise with zero mean and standard deviation $0.01$ was added to both state variables. The total simulation duration was $6000$ s ($600,000$ data points), providing ample data to clearly identify latent slow modulations, intermediate transient dynamics, and fast oscillatory modes using SDMD. Both methods are trained with 300 outer epochs and 2 inner epochs, where the procedures are elaborated in Algorithm~\ref{algorithm: sdmd_dl} and in \cite{li2017extended}.

\noindent\textbf{Experiment results:} To examine the efficacy of SDMD relative to classical dictionary-learning approaches, we compared SDMD to Extended Dynamic Mode Decomposition with Dictionary Learning (EDMD-DL). Both methods used identical observable variables ($r$ and $v$) from the full $6000$ s simulated dataset. Crucially, no direct information about the latent input $I(t)$ was provided during the training process. To ensure a fair and consistent comparison, both SDMD and EDMD-DL employed dictionaries constructed via an identical three-layer neural network (50 neurons per hidden layer). The final dictionary consisted of $25$ neural-network-trained basis functions, supplemented by $2$ first-order polynomial observables ($r, v$) and a constant term, yielding a total of $28$ dictionary elements. This setup allowed for an objective evaluation of SDMD’s capability in recovering latent dynamical timescales and hidden states relative to standard EDMD-DL approaches.
It is worth mentioning that while EDMD-DL converges after 300 epochs, SDMD-DL does not converge. However, we measured the similarity between the normalized second Koopman eigenfunction (see following) and the ground truth latent input $I(t)$ with the Pearson correlation coefficient and picked the training result from the outer epoch with the highest similarity.

Figure~\ref{fig:neural_mass_model}B(middle, bottom) shows the first two dominant eigenfunctions identified by SDMD-DL and EDMD-DL from the neural-mass model data. SDMD successfully recovers a latent slow state (represented by eigenfunction $\phi_2$) that aligns well with the ground-truth partitions induced by the input modulation (Fig.~\ref{fig:neural_mass_model}B, top). Moreover, SDMD reveals another eigenfunction ($\phi_3$) clearly illustrating the transient dynamics during regime transitions. In contrast, EDMD-DL fails to capture these two latent states despite using the identical neural-network dictionary structure. Specifically, EDMD-DL’s eigenfunction $\phi_2$ does exhibit baseline shifts between different slow states, but it remains significantly corrupted by high-frequency oscillations from the spiral regime, failing to clearly isolate the slow modulation. Additionally, EDMD-DL's eigenfunction $\phi_3$ does not provide interpretable transient dynamics. 

Although we selected the SDMD epoch solely by maximizing the similarity between the normalized slow eigenfunction $\phi_2(t)$ and the ground-truth input $I(t)$, this choice also quite remarkably yields an accurate recovery of the transient switching dynamics in the third eigenfunction $\phi_3(t)$. In the selected epoch, $\phi_3(t)$ cleanly isolates the brief regime transitions, whereas in epochs with lower $\phi_2(t)-I(t)$ correlation it remains buried under high-frequency oscillations. This demonstrates that calibrating the Koopman operator on a single, dominant timescale can effectively tune the full spectral approximation, allowing both the slow modulations and the intermediate transients to emerge clearly.

In summary, this neural mass model example effectively demonstrates SDMD-DL’s advantage over EDMD-DL in extracting underlying multiscale dynamics from a stochastic system. Under identical conditions, i.e., limited observables, parametrized dictionaries, and neural-network architectures, SDMD accurately isolates latent slow and transient states, despite the presence of noise. The inherent robustness of SDMD to noise is highlighted by its capability to extract meaningful eigenfunctions even from noisy, partially observed data, whereas EDMD-DL fails to decouple latent structures from the stochastic fluctuation. These results illustrate SDMD’s suitability and superior performance for analyzing stochastic dynamical systems with hidden multiscale features. Moreover, our findings show that by selecting the model that best aligns the dominant slow eigenmode with the true latent input, one simultaneously calibrates the full Koopman operator, which enables clean recovery of both slow modulations and rapid switching dynamics.

\section{Conclusion}
In this paper, we presented a novel computational framework (SDMD) for estimating Koopman semigroups in stochastic dynamical systems. Our approach addresses several challenges in the field and provides a robust foundation for analyzing spectral properties in stochastic dynamics. By directly approximating the semigroup, it eliminates the need for expensive matrix exponential computations, significantly improving computational efficiency. These features, combined with a specially designed loss function and updating rule, make the framework particularly suitable for neural network implementations. Rigorous convergence analysis further supports the method’s reliability, which provides probabilistic error bounds and finite-dimensional approximations. Numerical experiments on the Stuart-Landau oscillator, Ornstein-Uhlenbeck process, triple-well potential, and neural mass model confirm that the framework not only estimates accurate Koopman eigenvalues and eigenfunctions but also recovers hidden slow-fast dynamics from limited noisy observations.

Future work will focus on three primary directions. First, we plan to extend the SDMD framework to high-dimensional systems. This extension will tackle both theoretical and computational challenges, including investigating how dimensionality influences convergence rates. Second, we aim to apply SDMD to real-world data to study multiscale phenomena, thereby bridging the gap between theory and practice in fields such as brain dynamics. Third, we intend to incorporate the resolvent operator into our analysis to derive explicit error bounds for the estimated Koopman semigroup, Koopman generator, and their spectral components. In addition, calibrating the approximation to emphasize a specific timescale by explicitly optimizing for a chosen eigenmode or weighting particular spectral components could offer a powerful new strategy for improving Koopman operator estimation and targeting distinct dynamical regimes. By analyzing the spectral characteristics of the resolvent operator, we hope to establish the theoretical framework of stability analysis for our SDMD method in stochastic dynamical systems.

\section*{Acknowledgement}
We would like to thank Prof. Igor Mezi\'{c} for bringing attention to the paper \cite{vcrnjaric2020koopman}, Prof. Jason Bramburger for pointing out some similar results and sharing valuable conclusion and example in Section~4.3\&6.5 of \cite{bramburger2024auxiliary} and Dr. Pietro Novelli for inspiring discussion of the experimental results and for reviewing the technical part during the early stages of this work.

\bibliography{references}
\bibliographystyle{plain}

\newpage
\appendix


\section{Second-order Stochastic Taylor Expansion in SDMD}\label{2nd_order_expansion}
For $f\in C^4_b(\mathcal{M})$, the Koopman semigroup admits the 2nd order stochastic Taylor expansion in the following
$$
    \mathcal{K}^{\Delta t} f = f + \Delta t\,\mathcal{A}f + \frac{\Delta t^2}{2}\,\mathcal{A}^2 f + o(\Delta t^2),
$$
where the generator $\mathcal{A}$ acts as
$$
    \mathcal{A}f = \sum_{i=1}^d b_i \frac{\partial f}{\partial x_i} + \frac12 \sum_{i,j=1}^d (\sigma\sigma^\top)_{ij} \frac{\partial^2 f}{\partial x_i \partial x_j}.
$$
Applying $\mathcal{A}$ again yields
$$
\mathcal{A}^2 f = \sum_{k=1}^d b_k \frac{\partial}{\partial x_k} \left( \mathcal{A}f \right) + \frac12 \sum_{k,\ell=1}^d (\sigma\sigma^\top)_{k\ell} \frac{\partial^2}{\partial x_k \partial x_\ell} \left( \mathcal{A}f \right),
$$
which expands into a combination of terms involving derivatives of $b$ and $\sigma$ up to second order, and derivatives of $f$ up to fourth order. Specifically,
$$
\mathcal{A}^2 f = \sum_{i,k} b_k \frac{\partial b_i}{\partial x_k} \frac{\partial f}{\partial x_i} + \cdots + \frac14 \sum_{i,j,k,\ell} (\sigma\sigma^\top)_{k\ell} \frac{\partial^2 (\sigma\sigma^\top)_{ij}}{\partial x_k \partial x_\ell} \frac{\partial^2 f}{\partial x_i \partial x_j} + \cdots,
$$
where the omitted terms include all mixed derivatives up to order four of $f$ multiplied by appropriate derivatives of $b$ and $\sigma$. 
In the SDMD context, incorporating this term for each basis function $\psi_j$ would modify Eq.~\eqref{eq: expansion_basis} to
$$
\mathcal{K}^{\Delta t} \psi_j(x) \approx \psi_j(x) + \Delta t\,(\mathcal{A}\psi_j)(x) + \frac{\Delta t^2}{2}\,(\mathcal{A}^2\psi_j)(x),
$$
which leads to a higher-order approximation but with significantly greater computational cost due to the evaluation of $\mathcal{A}^2\psi_j$.

\section{McDiarmid's Inequality}\label{McDiarmid's Inequality}
\begin{definition}[Bounded Differences Property]\label{def: bounded_diff_property}
    A function $f : \mathcal{X}_1 \times \mathcal{X}_2 \times \cdots \times \mathcal{X}_n \to \mathbb{R}$ satisfies the \textit{bounded differences property} if substituting the value of the $i$-th coordinate $x_i$ changes the value of $f$ by at most $c_i$. More formally, if there are constants $c_1, c_2, \ldots, c_n$ such that for all $i \in [n]$, and all $x_1 \in \mathcal{X}_1, x_2 \in \mathcal{X}_2, \ldots, x_n \in \mathcal{X}_n$,
    \[
        \sup_{x_i' \in \mathcal{X}_i} |f(x_1, \ldots, x_{i-1}, x_i, x_{i+1}, \ldots, x_n) - f(x_1, \ldots, x_{i-1}, x_i', x_{i+1}, \ldots, x_n)| \leq c_i.
    \]
\end{definition}

\begin{lemma}[McDiarmid's Inequality \cite{ref1}]\label{lemma: McDiarmid_ineq}
    Let $f : \mathcal{X}_1 \times \mathcal{X}_2 \times \cdots \times \mathcal{X}_n \to \mathbb{R}$ satisfy the bounded differences property with bounds $c_1, c_2, \ldots, c_n$ as in Definition \ref{def: bounded_diff_property}. Consider independent random variables $X_1, X_2, \ldots, X_n$ where $X_i \in \mathcal{X}_i$ for all $i$. Then, for any $\epsilon > 0$,
    \[
        \mathbb{P}(|f(X_1, X_2, \ldots, X_n) - \mathbb{E}[f(X_1, X_2, \ldots, X_n)]| \geq \epsilon) \leq 2 \exp\left( - \frac{2\epsilon^2}{\sum_{i=1}^n c_i^2} \right).
    \]
\end{lemma}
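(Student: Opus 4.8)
The plan is to prove Lemma~\ref{lemma: McDiarmid_ineq} by the classical Doob-martingale (Azuma--Hoeffding) argument. First I would fix the filtration $\mathcal{F}_k := \sigma(X_1,\dots,X_k)$, with $\mathcal{F}_0$ trivial, and form the Doob martingale $Z_k := \mathbb{E}[f(X_1,\dots,X_n)\mid\mathcal{F}_k]$, so that $Z_0 = \mathbb{E}[f(X_1,\dots,X_n)]$ and $Z_n = f(X_1,\dots,X_n)$. Setting $D_k := Z_k - Z_{k-1}$, the increments $(D_k)_{k=1}^n$ form a martingale difference sequence, $\mathbb{E}[D_k\mid\mathcal{F}_{k-1}]=0$, and $f(X)-\mathbb{E}[f(X)] = \sum_{k=1}^n D_k$ by telescoping.

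Second, I would bound the conditional oscillation of each $D_k$. Because the $X_i$ are independent, $\mathbb{E}[f(X)\mid\mathcal{F}_k]$ is realized by the deterministic function $g_k(x_1,\dots,x_k):=\mathbb{E}[f(x_1,\dots,x_k,X_{k+1},\dots,X_n)]$ evaluated at $(X_1,\dots,X_k)$; the conditioning enters only through the first $k$ arguments precisely because $X_{k+1},\dots,X_n$ are independent of $\mathcal{F}_k$. Defining the $\mathcal{F}_{k-1}$-measurable quantities
\[
    A_k := \inf_{x\in\mathcal{X}_k} g_k(X_1,\dots,X_{k-1},x) - Z_{k-1}, \qquad B_k := \sup_{x\in\mathcal{X}_k} g_k(X_1,\dots,X_{k-1},x) - Z_{k-1},
\]
one has $A_k \le D_k \le B_k$, and the bounded-differences hypothesis on $f$ survives the averaging that defines $g_k$, giving $B_k - A_k \le c_k$.

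Third, I would apply Hoeffding's lemma --- the moment-generating-function bound for a bounded, conditionally-mean-zero random variable --- to get $\mathbb{E}[e^{sD_k}\mid\mathcal{F}_{k-1}] \le \exp\!\big(s^2(B_k-A_k)^2/8\big) \le \exp\!\big(s^2 c_k^2/8\big)$ for every $s\in\mathbb{R}$. A Chernoff bound followed by repeated use of the tower property, peeling one factor at a time, then yields
\[
    \mathbb{P}\!\Big(\textstyle\sum_{k=1}^n D_k \ge \epsilon\Big) \le e^{-s\epsilon}\,\mathbb{E}\!\Big[\textstyle\prod_{k=1}^n e^{sD_k}\Big] \le \exp\!\Big(-s\epsilon + \tfrac{s^2}{8}\textstyle\sum_{k=1}^n c_k^2\Big),
\]
and optimizing over $s>0$ at $s = 4\epsilon/\sum_k c_k^2$ gives $\mathbb{P}(f(X)-\mathbb{E}[f(X)]\ge\epsilon) \le \exp\!\big(-2\epsilon^2/\sum_k c_k^2\big)$. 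Running the same argument for $-f$, which satisfies the bounded-differences property with the identical constants, controls the lower tail, and a union bound produces the two-sided estimate as stated.

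The main obstacle is the second step: verifying rigorously that, conditionally on $\mathcal{F}_{k-1}$, the increment $D_k$ lies in an interval of width at most $c_k$. This is exactly where independence of the coordinates is indispensable --- it is what lets $\mathbb{E}[f(X)\mid\mathcal{F}_k]$ be written as the fixed function $g_k$ evaluated at the first $k$ coordinates, so that the coordinatewise Lipschitz bound on $f$ descends to $g_k$ after integrating out $X_{k+1},\dots,X_n$. The remaining ingredients (the telescoping martingale decomposition, Hoeffding's lemma, and the Chernoff optimization) are routine; I would either cite Hoeffding's lemma or supply its short proof via convexity of $s\mapsto e^{sx}$ together with a second-order bound on the cumulant generating function of a bounded random variable.
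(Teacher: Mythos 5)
Your proposal is correct: it is the standard Doob-martingale (Azuma--Hoeffding) proof of McDiarmid's inequality, and all the essential steps --- the telescoping decomposition, the use of independence to write $\mathbb{E}[f\mid\mathcal{F}_k]$ as a fixed function of the first $k$ coordinates so the bounded-differences constant $c_k$ survives the averaging, the conditional Hoeffding lemma, the Chernoff optimization at $s=4\epsilon/\sum_k c_k^2$, and the union bound over the two tails --- are in place. The paper itself does not prove this lemma but cites it as a classical result, and your argument is precisely the one found in the standard references, so there is nothing to reconcile.
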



\section{2D Stuart-Landau equation: Phase Diffusion Equation}\label{appendix:2d_sl_eqn}

The SL equation not only has the Cartesian and polar coordinates form, but also has phase coordinates form \cite[Section 4]{tantet2020ruellepollicottresonancesstochasticsystems} given in the following:
\[
\begin{aligned}
    dr &= (\delta r - \kappa r^3 + \frac{\epsilon^2}{2r}) \, dt + \epsilon \, dW_r, \\
    d\phi &= \omega_f \, dt + \frac{\epsilon}{r} \, dW_\theta - \tilde{\beta} \frac{\epsilon}{r} \, dW_r,
\end{aligned}
\]
where \( \phi = \theta - \tilde{\beta}\log(r/R) \) with \( \tilde{\beta} = \beta/\kappa \) being the twist factor.

For $\delta > 0$ and $\epsilon\sqrt{\kappa}/\delta \ll 1$, the eigenfunctions are given by:
\begin{itemize}
    \item $l = 0$:
    \[
        \psi_{0n} = \exp(i(n(\theta - \tilde{\beta}\log\frac{r}{R}))),
    \]
    \item $l>0$:
    \[
        \psi_{ln} \propto H_l(\sqrt{2\delta}\frac{r-R}{\epsilon})\exp(i(n(\theta - \tilde{\beta}\log\frac{r}{R}))),
    \]
\end{itemize}
where $H_l$ is the $l$-th order Hermite polynomial.

Figure \ref{fig: 2d_sl_eqn_efun_comparison} displays a comparison of the analytical eigenfunctions with those obtained from EDMD and SDMD for various modes. The analytical eigenfunctions are normalized by a factor of \(1/\sqrt{2^{|n|}|n|!}\) and combine the phase dynamics, expressed by \(\exp\left(in\left(\theta - \tilde{\beta}\log\left(\frac{r}{R}\right)\right)\right)\), with the radial structure given by Hermite polynomials \(H_l\left(\frac{\sqrt{2\delta}(r-R)}{\epsilon}\right)\). Both EDMD and SDMD utilize Fourier basis functions; however, SDMD demonstrates superior accuracy in capturing the phase structure. Note that our dataset covers the range \(r \in [0.4, 0.8]\), so there is no information available for the region \(r < 0.4\). Within the considered range, the eigenfunctions computed by both methods exhibit the rotational behavior observed in the true eigenfunctions, as shown in Figure \ref{fig: 2d_sl_eqn_efun_comparison}.
\begin{figure}[H]
    \centering
    \begin{subfigure}[b]{0.33\textwidth}
        \includegraphics[width=\textwidth]{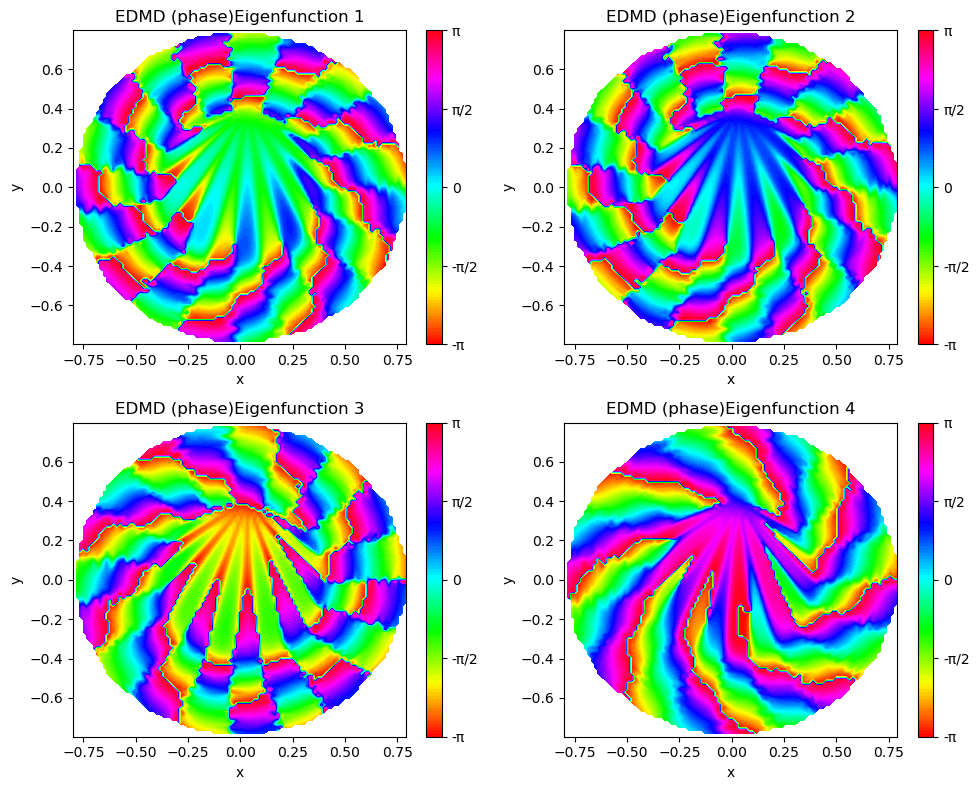}
        \caption{Eigenfunctions by EDMD}
        \label{fig: 2d_sl_eqn_efun_edmd}
    \end{subfigure}
    \hfill
    \begin{subfigure}[b]{0.33\textwidth}
        \includegraphics[width=\textwidth]{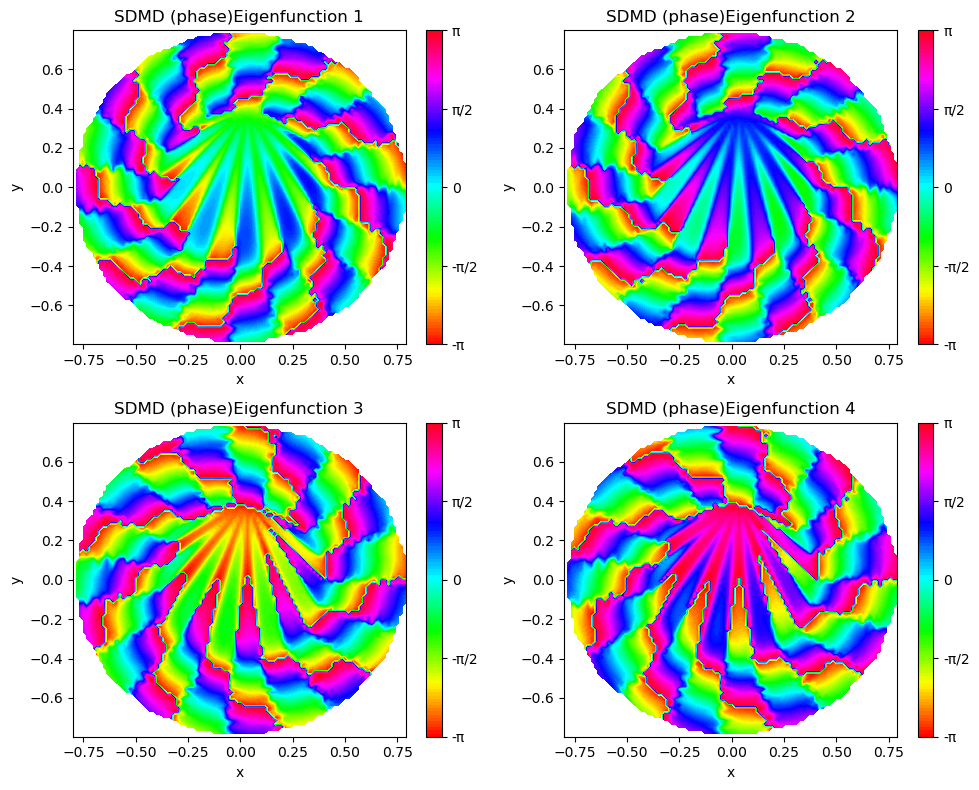}
        \caption{Eigenfunctions by SDMD}
        \label{fig: 2d_sl_eqn_efun_sdmd}
    \end{subfigure}
    \hfill
    \begin{subfigure}[b]{0.33\textwidth}
        \includegraphics[width=\textwidth]{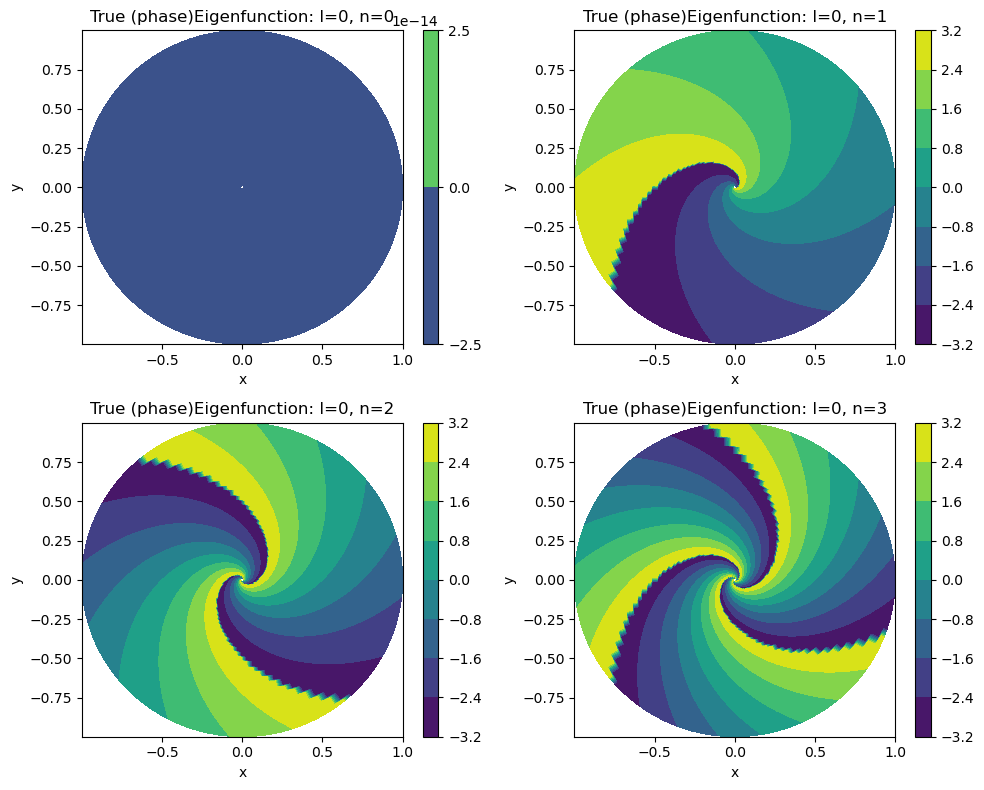}
        \caption{True eigenfunctions}
        \label{fig: 2d_sl_eqn_true}
    \end{subfigure}
    \caption{Comparison of eigenfunctions estimated by EDMD and SDMD for stochastic Stuart-Landau system with Fourier basis. \eqref{fig: 2d_sl_eqn_efun_edmd} shows the eigenfunctions obtained from EDMD, while \eqref{fig: 2d_sl_eqn_efun_sdmd} shows those obtained from SDMD.}
    \label{fig: 2d_sl_eqn_efun_comparison}
\end{figure}

\section{Estimation of Drift and Diffusion Coefficients in SDE}\label{appendix: sde_coef_estimation}

We start with SDE given in Eq.\eqref{eq: sde}, discretized via the Euler-Maruyama (EM) method over a small time step \(\Delta t\):
\[
    \mathbf{X}_{t+\Delta t} - \mathbf{X}_t = \mathbf{b}(\mathbf{X}_t) \Delta t + \sigma(\mathbf{X}_t) \sqrt{\Delta t} \xi_t.
\]

Here, \(\xi_t \sim \mathcal{N}(0, I)\) introduces noise, and this generates our time series data pairs, \(\mathbf{X}_t\) and \(\mathbf{X}_{t+\Delta t}\). A single neural network is trained to predict the next state, taking \(\mathbf{X}_t\) as input and outputting \(\widehat{\mathbf{X}}_{t+\Delta t}\). The loss function driving this training is the mean squared error(SE):
\[
    \text{MSE} = \frac{1}{N} \sum_{t} (\mathbf{X}_{t+\Delta t} - \widehat{\mathbf{X}}_{t+\Delta t})^2.
\]

This measures the average squared difference between the actual next state \(\mathbf{X}_{t+\Delta t}\) from the data and the NN’s prediction \(\widehat{\mathbf{X}}_{t+\Delta t}\), over \(N\) data pairs. By minimizing this loss, the NN learns to approximate the deterministic shift, \(\widehat{\mathbf{X}}_{t+\Delta t} \approx \mathbf{X}_t + \mathbf{b}(\mathbf{X}_t) \Delta t\), as the noise term’s mean is zero.

From this NN, the drift is estimated as
\[
    \mathbf{b}(\mathbf{X}_t) \approx \frac{\widehat{\mathbf{X}}_{t+\Delta t} - \mathbf{X}_t}{\Delta t},
\]
using \(\mathbf{X}_t\) from the data and \(\widehat{\mathbf{X}}_{t+\Delta t}\) from the NN. For diffusion, the residual \(\mathbf{r}_t = \mathbf{X}_{t+\Delta t} - \widehat{\mathbf{X}}_{t+\Delta t}\) captures the noise, with variance defined as \(\textit{variance} = (\mathbf{X}_{t+\Delta t} - \widehat{\mathbf{X}}_{t+\Delta t})^2 \approx \sigma(\mathbf{X}_t)^2 \Delta t\). Thus,
\[
    \sigma(\mathbf{X}_t) \approx \sqrt{\frac{\textit{variance}}{\Delta t}}.
\]

\end{document}